\newtheorem{defi}{Definition}[section]
\newtheorem{thm}[defi]{Theorem}
\newtheorem{rem}[defi]{Remark}
\newtheorem{prop}[defi]{Proposition}
\newtheorem{lemma}[defi]{Lemma}
\newcommand{\R}{\mathbb R}
\title[Heat flow in polygons with reflecting edges]{Heat flow in polygons with reflecting edges}
\author{Sam Farrington}
\address{Department of Mathematical Sciences, Durham University,
Mathematical Sciences and Computer Science Building MCS2030,
Upper Mountjoy Campus, Stockton Road,
Durham DH1 3LE,
United Kingdom.}
\email{sam.farrington@durham.ac.uk}
\author{Katie Gittins}
\address{Department of Mathematical Sciences, Durham University,
Mathematical Sciences and Computer Science Building MCS2030,
Upper Mountjoy Campus, Stockton Road,
Durham DH1 3LE,
United Kingdom.}
\email{katie.gittins@durham.ac.uk}
\subjclass[2010]{35K05, 35K20}
\keywords{Heat content, polygon, reflecting edges}
\date{\today}
\begin{document}
\begin{abstract}
We investigate the heat flow in an open, bounded set $D$ in $\R^2$ with polygonal boundary $\partial D$.
We suppose that $D$ contains an open, bounded set $\widetilde{D}$ with polygonal boundary $\partial \widetilde{D}$.
The initial condition is the indicator function of $\widetilde{D}$
and we impose a Neumann boundary condition on the edges of $\partial D$.
We obtain an asymptotic formula for the heat content of $\widetilde{D}$ in $D$ as time $t\downarrow 0$.
\end{abstract}
\maketitle

\section{Introduction}

Let $D\subset \mathbb{R}^{2}$ be an open, bounded, connected set with polygonal boundary $\partial D$ such that the interior angles $\theta$ of $\partial D$ satisfy $0 < \theta < 2\pi$. We call such a set $D$ a polygonal domain.
Let $\widetilde{D}\subset D$ be a polygonal subdomain.
We consider the heat equation \begin{equation}
    \frac{\partial u}{\partial t} = \Delta u
\end{equation} on $D$ with Neumann boundary condition imposed on $\partial D$, that is
\begin{equation}
    \frac{\partial u}{\partial n} (t;x) = 0, \enspace x \in \partial D, \, t>0,
\end{equation}
where $n$ is the inward-pointing unit normal to $\partial D$ (defined up to a set of measure zero), and initial datum \begin{equation}
    \lim_{t\downarrow 0} u(t;x) = \mathds{1}_{\widetilde{D}}(x), \enspace x \in D\setminus \partial \widetilde{D}.
\end{equation}
Here $\mathds{1}_{\widetilde{D}}$ is the indicator function of $\widetilde{D}$.
We denote the unique, smooth solution to this problem by $u_{D,\widetilde{D}}$. Physically, $u_{D,\widetilde{D}}(t;x)$ represents the temperature at $x\in D$ at time $t$ when the initial temperature is 1 in $\widetilde{D}$ and $0$ in $D\setminus\widetilde{D}$ and the total heat contained in $D$ is conserved due to the Neumann (fluxless or insulated) boundary condition. The solution $u_{D,\widetilde{D}}$ can be obtained from the unique Neumann heat kernel $\eta_{D}(t;x,y)$ for $D$ by
\begin{equation}
    u_{D,\widetilde{D}}(t;x) = \int_{D} dy \;  \eta_{D}(t;x,y) \mathds{1}_{\widetilde{D}}(y) = \int_{\widetilde{D}} dy \; \eta_{D}(t;x,y).
\end{equation} We are interested in the heat content of $\widetilde{D}$, that is the amount of heat inside $\widetilde{D}$ at time $t$, and we denote it by \begin{equation}\label{eq:1}
    H_{D,\widetilde{D}}(t) := \int_{\widetilde{D}} dx \; u_{D,\widetilde{D}}(t;x).
\end{equation}

The function $H_{D,\widetilde{D}}(t)$ is smooth from the smoothness of $u_{D,\widetilde{D}}$ and other properties can be deduced from the $L^2$-eigenfunction expansion of $\eta_{D}$. Let $\mu_j$, $j=0,1,\dots$, denote the Neumann eigenvalues of $D$ with corresponding eigenfunctions $\phi_j$ that are normalised in $L^2(D)$. We recall that the Neumann eigenvalues can be written in a non-decreasing sequence counted with multiplicity $0=\mu_0 < \mu_1 \leq \mu_2 \leq \dots$, where the only accumulation point is $+\infty$.
The Neumann heat kernel has the following expansion
\begin{equation*}
    \eta_{D}(t;x,y) = \sum_{j=0}^\infty e^{-t \mu_j} \phi_j(x) \phi_j(y).
\end{equation*}
The heat content of $\widetilde{D}$ can then be written
\begin{equation*}
    H_{D,\widetilde{D}}(t)
    = \sum_{j=0}^\infty e^{-t \mu_j} \left(\int_{\widetilde{D}} dx \, \phi_j(x) \right)^2,
\end{equation*}
from which we observe that $t \mapsto H_{D,\widetilde{D}}(t)$
is strictly decreasing on $(0,\infty)$,
\begin{equation*}
    \frac{\partial}{\partial t} H_{D,\widetilde{D}}(t)
    = \sum_{j=0}^\infty -\mu_j e^{-t \mu_j} \left(\int_{\widetilde{D}} dx \, \phi_j(x) \right)^2 < 0,
\end{equation*}
and strictly convex on $(0,\infty)$,
\begin{equation*}
    \frac{\partial^2}{\partial t^2} H_{D,\widetilde{D}}(t)
    = \sum_{j=0}^\infty \mu_j^2 e^{-t \mu_j} \left(\int_{\widetilde{D}} dx \, \phi_j(x) \right)^2 > 0.
\end{equation*}
Since $\phi_0(x) = \vert D \vert^{-1/2}$, we see that
\begin{equation*}
    \lim_{t\to \infty} H_{D,\widetilde{D}}(t)
    = \lim_{t\to \infty} \left( \left(\int_{\widetilde{D}} dx \, \phi_0(x) \right)^2 + \sum_{j=1}^\infty e^{-t \mu_j} \left(\int_{\widetilde{D}} dx \, \phi_j(x) \right)^2\right)
    = \frac{\vert \widetilde{D} \vert^2}{\vert D \vert}.
\end{equation*}

In the rest of this paper, we consider the small-time asymptotic behaviour of $H_{D,\widetilde{D}}(t)$. Thus, in what follows, by $O(\cdot)$ we consider the limit $t \downarrow 0$.

The heat content of polygonal subdomains in, possibly unbounded, domains $\Omega \subset \mathbb{R}^{2}$ can be defined analogously to \eqref{eq:1}
by considering the heat equation on $\Omega$ with some boundary condition imposed on $\partial \Omega$ (when the latter is non-empty).
The small-time asymptotics for such cases have been obtained in \cite{vdBS90}, \cite{vdBG16}, and \cite{vdBGG20} and we summarise these below.
We denote the length of a segment $A \subset \partial \widetilde{D}$ by $L(A)$ so that $L(\partial \widetilde{D})$ is the length of the boundary of $\widetilde{D}$.

In \cite{vdBS90}, the authors consider the Dirichlet case, that is $\Omega=\widetilde{D}$, $\widetilde{D}$ has initial temperature 1 and Dirichlet boundary condition imposed on $\partial \widetilde{D}$ for all $t>0$. The Dirichlet heat content of $\widetilde{D}$ is defined as
\begin{equation*}
    Q_{\widetilde{D}}(t) := \int_{\widetilde{D}} dx \int_{\widetilde{D}} dy \, \pi_{\widetilde{D}}(t; x,y),
\end{equation*}
where $\pi_{\widetilde{D}}(t;x,y)$ is the Dirichlet heat kernel of $\widetilde{D}$, and the authors obtain that
\begin{equation}
    Q_{\widetilde{D}}(t)  = \vert \widetilde{D} \vert - \frac{2 L(\partial \widetilde{D})}{\pi^{1/2}} t^{1/2} + \left(\sum_{\gamma \in \mathcal{A}} f(\gamma)\right)t + O\left(e^{-C_{1}/t}\right),
\end{equation} where: $\mathcal{A}$ is the collection of interior angles at the vertices of $\widetilde{D}$; $C_{1} > 0$ is a constant depending only on $\widetilde{D}$; and, $f:(0,2\pi) \to \mathbb{R}$ is given by \begin{equation}
    f(\gamma) := \int_{0}^{\infty}d\theta \; \frac{4\sinh\left(\left(\pi-\gamma\right)\theta\right)}{\sinh\left(\pi \theta\right) \cosh\left(\gamma \theta\right)}.
\end{equation}
We note that in \cite{vdBS90}, the authors include the case where interior angles can be equal to $2\pi$. One can also introduce this for Neumann boundary conditions under a suitable generalisation of the boundary condition which we discuss in Appendix B.

In \cite{vdBG16}, the authors consider the open case, that is $\Omega = \mathbb{R}^{2}$, $\widetilde{D}$ has initial temperature 1 and $\mathbb{R}^2 \setminus \overline{\widetilde{D}}$ has initial temperature 0. The open heat content of $\widetilde{D}$ is defined as
\begin{equation*}
    H_{\widetilde{D}}(t) := \int_{\widetilde{D}} dx \int_{\widetilde{D}} dy \, p_{\mathbb{R}^2}(t;x,y),
\end{equation*}
where $p_{\mathbb{R}^2}(t;x,y) = (4\pi t)^{-1} e^{-\vert x-y \vert^2/(4t)}$ is the heat kernel for $\mathbb{R}^2$. The authors obtain that
\begin{equation}
    H_{\widetilde{D}}(t) = \vert \widetilde{D}\vert - \frac{L(\partial \widetilde{D})}{\pi^{1/2}} t^{1/2} +  \left(\sum_{\gamma \in \mathcal{A}} a(\gamma)\right)t + O\left(e^{-C_{2}/t}\right),
\end{equation} where: $\mathcal{A}$ is as above; $C_{2} > 0$ is a constant depending only on $\widetilde{D}$; and, $a:(0,2\pi) \to \mathbb{R}$ is given by \begin{equation}\label{eq:e1}
    a(\gamma) := \begin{cases}
 \frac{1}{\pi}+\left(1-\frac{\gamma}{\pi}\right)\cot\gamma , & \gamma \in (0,\pi)\cup(\pi,2\pi), \\
 0, & \gamma =\pi.
    \end{cases}
\end{equation}

In \cite{vdBGG20}, the authors consider the Dirichlet-open case, that is $\Omega = \mathbb{R}^{2}\setminus \partial\widetilde{D}_{-}$, where $\partial \widetilde{D}_{-} \subset \partial\widetilde{D}$ is some collection of edges, $\widetilde{D}$ has initial temperature 1, $\mathbb{R}^2 \setminus \overline{\widetilde{D}}$ has initial temperature 0 and Dirichlet boundary condition imposed on $\partial \widetilde{D}_{-}$ for all $t>0$.
The corresponding heat content of $\widetilde{D}$ is defined as
\begin{equation*}
    G_{\widetilde{D}, \partial \widetilde{D}_{-}}(t) := \int_{\widetilde{D}} dx \int_{\widetilde{D}} dy \, \pi_{\mathbb{R}^2 \setminus \partial \widetilde{D}_{-}}(t;x,y).
\end{equation*}
The authors obtain
\begin{equation}
\begin{split}
     G_{\widetilde{D}, \partial \widetilde{D}_{-}}(t)
     & = |\widetilde{D}| - \frac{\left(L(\partial \widetilde{D} \setminus \partial \widetilde{D}_{-}) +2L(\partial \widetilde{D}_{-})\right)}{\pi^{1/2}}t^{1/2} \\
   & \qquad + \left(\sum_{\gamma\in \mathcal{A}_{1}}a(\gamma) + \sum_{\gamma\in \mathcal{A}_{2}}f(\gamma)+\sum_{\gamma\in \mathcal{A}_{3}}g(\gamma)\right)t + O\left(e^{-C_{3} / t}\right),
 \end{split}
\end{equation} where: $\mathcal{A}_{1}$ is the collection of interior angles at vertices where two open edges intersect; $\mathcal{A}_{2}$ is the collection of interior angles at vertices where two Dirichlet edges intersect; $\mathcal{A}_{3}$ is the collection of interior angles at vertices where a Dirichlet edge and an open edge intersect; $C_{3}>0$ is a constant only depending on $\widetilde{D}$; $a$ and $f$ are as above; and $g : (0,2\pi) \to \mathbb{R}$ is given by \begin{equation}
    g(\gamma) := -\frac{3}{4}+\int_{0}^{\infty}d\theta \; \frac{4\sinh^{2}\left(\left(\pi-\tfrac{\gamma}{2}\right)\theta\right)-\sinh^{2}\left(\left(\pi-\gamma \right)\theta\right)}{\sinh^{2}\left(\tfrac{\pi}{2}\theta\right)\cosh\left(\pi\theta\right)}.
\end{equation}

In the Dirichlet, open, and Dirichlet-open cases any boundary conditions are only imposed on a subset of $\partial \widetilde{D}$ and so the results only ever depend on the geometry of $\widetilde{D}$. For the problem we consider in this paper, this is not the case and we also have the relative geometry of $\widetilde{D}$ with respect to $D$ to consider.
Thus, in order to state our main result, we require some additional terminology and notation.
Let $\widetilde{V}$ denote the union of the vertices of $\widetilde{D}$ and the vertices of $D$ lying on $\partial \widetilde{D}$. Moreover, let $\widetilde{E}$ denote the collection of edges of parts of the boundary of $\partial \widetilde{D}$ between vertices in $\widetilde{V}$.
We call edges in $\widetilde{E}$ that lie in $D$ (except, perhaps, the endpoints of the edges) open edges and edges in $\widetilde{E}$ that lie on $\partial D$ Neumann edges.
Throughout this work we assume each vertex in $\widetilde{V}$ is of one of the following types (see Figure \ref{fig:example_polygon} for an example).

 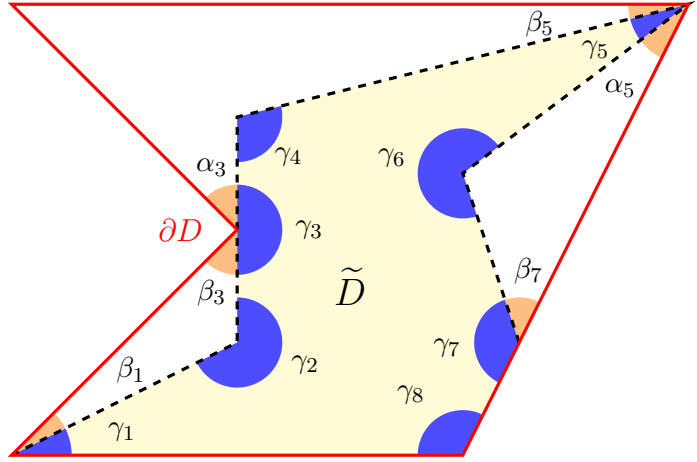
\begin{figure}
    \centering
    \begin{tikzpicture}[scale=1.5]
    \path [fill=yellow!20] (0,0) -- (2,2) -- (0,4) -- (6,4) -- (4,0) -- cycle;
    \path [fill=white] (0,0) -- (2,1) -- (2,2) -- cycle;
    \path [fill=white] (2,2) -- (2,3) -- (6,4) -- (0,4) -- cycle;
    \path [fill=white] (6,4) -- (4,2.5) -- (4.5,1) -- cycle;
     \coordinate (a) at (0,0);
    \coordinate (b) at (2,1);
    \coordinate (c) at (2,2);
    \coordinate (d) at (2,3);
    \coordinate (e) at (6,4);
    \coordinate (f) at (4,2.5);
    \coordinate (g) at (4.5,1);
    \coordinate (h) at (4,0);
    \coordinate (i) at (0,4);
    \draw pic[fill=blue!70,angle radius=0.8cm,"$\gamma_{1}$" shift={(10mm,2mm)}] {angle=h--a--b};
    \draw pic[fill=blue!70,angle radius=0.6cm,"$\gamma_{2}$" shift={(6mm,-1mm)}] {angle=a--b--c};
    \draw pic[fill=blue!70,angle radius=0.6cm,"$\gamma_{3}$" shift={(6mm,0mm)}] {angle=b--c--d};
    \draw pic[fill=blue!70,angle radius=0.6cm,"$\gamma_{4}$" shift={(4mm,-3mm)}] {angle=c--d--e};
    \draw pic[fill=blue!70,angle radius=0.8cm,"$\gamma_{5}$" shift={(-8mm,-4mm)}] {angle=d--e--f};
    \draw pic[fill=blue!70,angle radius=0.6cm,"$\gamma_{6}$" shift={(-6mm,1mm)}] {angle=e--f--g};
    \draw pic[fill=blue!70,angle radius=0.6cm,"$\gamma_{7}$" shift={(-6mm,-1mm)}] {angle=f--g--h};
    \draw pic[fill=blue!70,angle radius=0.6cm,"$\gamma_{8}$" shift={(-5mm,5mm)}] {angle=g--h--a};
    \draw pic[fill=orange!50,angle radius=0.8cm,"$\beta_{1}$" shift={(12mm,8.5mm)}] {angle=b--a--c};
    \draw pic[fill=orange!50,angle radius=0.6cm,"$\beta_{3}$" shift={(-2mm,-5mm)}] {angle=a--c--b};
    \draw pic[fill=orange!50,angle radius=0.6cm,"$\alpha_{3}$" shift={(-2mm,5mm)}] {angle=d--c--i};
    \draw pic[fill=orange!50,angle radius=0.8cm,"$\alpha_{5}$" shift={(-6mm,-7.5mm)}] {angle=f--e--g};
    \draw pic[fill=orange!50,angle radius=0.8cm,"$\beta_{5}$" shift={(-15mm,-2mm)}] {angle=i--e--d};
    \draw pic[fill=orange!50,angle radius=0.6cm,"$\beta_{7}$" shift={(1mm,6mm)}] {angle=e--g--f};
    \draw[dashed, very thick] (0,0) -- (2,1) -- (2,3) -- (6,4) -- (4,2.5) -- (4.5,1);
    \draw[red, very thick] (0,0) -- (2,2) -- (0,4) -- (6,4) -- (4,0) -- cycle;
    \node at (3,1.5) {\Large $\widetilde{D}$};
    \node at (1.5,2) {\large \color{red} $\partial D$};
    \end{tikzpicture}
    \caption{An example setup for our problem with $\widetilde{D}$ highlighted in yellow and the angles formed by $\widetilde{D}$ and $D$ labelled at the vertices.}
    \label{fig:example_polygon}
\end{figure}

\begin{enumerate}
    \item[(i)] We say a vertex in $\widetilde{V}$ is an open vertex if it lies in $D$ and it has two incident edges in $\widetilde{E}$ which are open. Let $\mathcal{A}$ denote the collection of interior angles at all such open vertices.
    \item[(ii)] We say a vertex in $\widetilde{V}$ is a Neumann--Open--Neumann (NON) vertex if it lies on $\partial D$ and has two incident edges in $\widetilde{E}$, one of which is open and the other is Neumann. Let $\mathcal{B}$ denote the collection of pairs $(\gamma,\beta)$ at all such NON vertices where $\gamma$ denotes the interior angle of $\widetilde{D}$ at the vertex and $\beta$ denotes the exterior angle relative to $D$ at the vertex (see Figure \ref{fig:vertex_types}).
    \item[(iii)] We say a vertex in $\widetilde{V}$ is a Neumann--Open--Open--Neumann (NOON) vertex if it lies on $\partial D$ and
    it has two or four incident edges in $\widetilde{E}$, for which two are open and the rest are Neumann. Let $\mathcal{C}$ denote the collection of triples $(\gamma,\beta,\alpha)$ at all such NOON vertices where $\gamma$ denotes the angle between the open edges, which we call the middle angle at the vertex,
    and $\beta$ and $\alpha$ denote the two other exterior angles relative to $D$ at the vertex (see Figure \ref{fig:vertex_types}). The ordering of $\alpha$ and $\beta$ does not matter but we only have one triple for each NOON vertex. Note that NOON vertices either have that $\gamma$ is an interior angle of $\widetilde{D}$, or $\beta$ and $\alpha$ are both interior angles of $\widetilde{D}$.
     \item[(iv)] We say a vertex in $\widetilde{V}$ is a Neumann vertex if it lies on $\partial D$ and has two incident edges in $\widetilde{E}$ which are both Neumann.
\end{enumerate}

 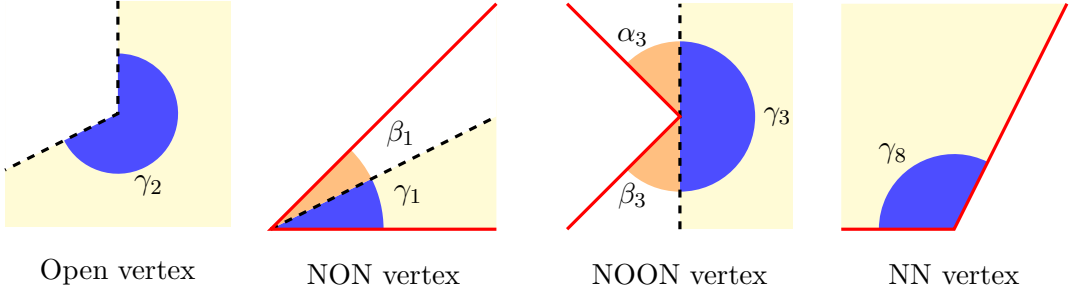
\begin{figure}
    \centering
    \begin{tikzpicture}[scale=3]
    \path[fill = yellow!20] (-0.5,1) -- (-0.5,0) -- (0.5,0) -- (0.5,1) -- cycle;
    \path [fill=white] (-0.5,0.25) -- (0,0.5) -- (0,1) -- (-0.5,1) -- cycle;
     \coordinate (a) at (0,0.5);
    \coordinate (b) at (0,1);
    \coordinate (c) at (-0.5,0.25);
    \draw pic[fill=blue!70,angle radius=0.8cm,"\large $\gamma_{2}$" shift={(0mm,-7mm)}] {angle=c--a--b};
    \draw[dashed, very thick] (0,1) -- (0,0.5) -- (-0.5,0.25);
    \node at (0,-0.2) {Open vertex};
    \end{tikzpicture} \quad
    \begin{tikzpicture}[scale=3]
    \path[fill=yellow!20] (1,0) -- (0,0) -- (1,1) -- cycle;
    \path [fill=white] (0,0) -- (1,0.5) -- (1,1) -- cycle;
     \coordinate (a) at (0,0);
    \coordinate (b) at (2,1);
    \coordinate (c) at (2,2);
    \coordinate (d) at (2,3);
    \coordinate (e) at (4,0);
    \draw pic[fill=blue!70,angle radius=1.5cm,"\large $\gamma_{1}$" shift={(9.5mm,2.5mm)}] {angle=e--a--b};
    \draw pic[fill=orange!50,angle radius=1.5cm,"$\beta_{1}$" shift={(10mm,7.5mm)}] {angle=b--a--c};
    \draw[dashed, very thick] (0,0) -- (1,0.5);
    \draw[red, very thick] (1,0) -- (0,0) -- (1,1);
    \node at (0.5,-0.2) {NON vertex};
    \end{tikzpicture} \qquad
    \begin{tikzpicture}[scale=3]
     \coordinate (a) at (0,0.5);
    \coordinate (b) at (0,0);
    \coordinate (c) at (0,1);
    \coordinate (d) at (-0.5,1);
    \coordinate (e) at (-0.5,0);
    \path[fill=yellow!20] (0,0) -- (0,1) -- (0.5,1) -- (0.5,0) -- cycle;
    \path [fill=white] (e) -- (a) -- (b) -- cycle;
    \path [fill=white] (d) -- (a) -- (c) -- cycle;
    \draw pic[fill=blue!70,angle radius=1cm,"$\gamma_{3}$" shift={(7mm,0mm)}] {angle=b--a--c};
    \draw pic[fill=orange!50,angle radius=1cm,"$\alpha_{3}$" shift={(-4mm,5mm)}] {angle=c--a--d};
    \draw pic[fill=orange!50,angle radius=1cm,"$\beta_{3}$" shift={(-4mm,-5mm)}] {angle=e--a--b};
    \draw[dashed, very thick] (0,0) -- (0,1);
    \draw[red, very thick] (d) -- (a) -- (e);
    \node at (0,-0.2) {NOON vertex};
    \end{tikzpicture} \quad
    \begin{tikzpicture}[scale=3]
     \coordinate (a) at (0,0);
    \coordinate (b) at (0.5,1);
    \coordinate (c) at (-0.5,0);
    \path[fill=yellow!20] (b) -- (a) -- (c) -- (-0.5,1) -- cycle;
    \draw pic[fill=blue!70,angle radius=1cm,"\large $\gamma_{8}$" shift={(-5mm,5mm)}] {angle=b--a--c};
    \draw[red, very thick] (c) -- (a) -- (b);
    \node at (0,-0.2) {NN vertex};
    \end{tikzpicture}
    \caption{An example of each of the four possible types of vertices that can arise in the problem. They are magnifications of the vertices in Figure \ref{fig:example_polygon} (see angle subscripts).
    See Figure \ref{fig:relected_iso_heat} for an example of the other type of NOON vertex.}
    \label{fig:vertex_types}
\end{figure}

Let $\partial_{O}\widetilde{D}$ denote the collection of open edges in $\widetilde{E}$. Our main result is the following.
\begin{thm}\label{thm:main}
There exists a constant $C_{D,\widetilde{D}} > 0$, depending only on $D$ and $\widetilde{D}$, such that
\begin{equation}\label{eq:main}
\begin{split}
    H_{D, \widetilde{D}}(t) & = \vert \widetilde{D} \vert
    - \frac{L(\partial_{O} \widetilde{D})}{\pi^{1/2}} t^{1/2} \\ & \qquad  + \left( \sum_{\gamma \in \mathcal A} a(\gamma)
    + \sum_{(\gamma,\beta)\in \mathcal{B}} b(\gamma,\beta) + \sum_{(\gamma,\beta,\alpha) \in \mathcal{C}} c(\gamma,\beta,\alpha)\right) t  + O\left(e^{-C_{D,\widetilde{D}}/t}\right),
\end{split}
\end{equation} where: $a : (0,2\pi) \to \mathbb{R}$ is defined as above in \eqref{eq:e1}; $b(\gamma,\beta)$ is given by \begin{equation}
    b(\gamma,\beta) := \int_{0}^{\infty} d\theta \; \frac{\cosh\left(\frac{\pi}{2}\theta\right)\cosh\left(\left(\gamma-\beta\right)\theta\right)-\cosh\left(\left(\frac{\pi}{2}-\gamma-\beta\right)\theta\right)}{2\sinh\left(\left(\gamma+\beta\right)\theta\right)\sinh\left(\frac{\pi}{2}\theta\right)};\end{equation} and $c(\gamma,\beta,\alpha)$ is given by \begin{equation}
        \begin{split}
            c(\gamma,\beta,\alpha) & := b(\gamma+\beta,\alpha) + b(\gamma+\alpha,\beta) \\
            & \quad + \int_{0}^{\infty} d\theta \; \frac{\cosh\left(\frac{\pi}{2}\theta\right)\left[\cosh\left(\left(\beta+\alpha\right)\theta\right)-\cosh\left(\left(\beta-\alpha\right)\theta\right)\right]}{\sinh\left(\left(\gamma+\beta+\alpha\right)\theta\right)\sinh\left(\frac{\pi}{2}\theta\right)}.
        \end{split}
    \end{equation}
\end{thm}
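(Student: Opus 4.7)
The plan is to follow the localization-and-modelling scheme of \cite{vdBS90, vdBG16, vdBGG20}. Using conservation of heat under the Neumann condition, $\int_D \eta_D(t;x,y)\,dy = 1$, write
\begin{equation*}
H_{D,\widetilde{D}}(t) = |\widetilde{D}| - \int_{\widetilde{D}} dx \int_{D\setminus\widetilde{D}} dy\; \eta_D(t;x,y),
\end{equation*}
so that the task is to compute the small-$t$ expansion of the double integral on the right. Standard Gaussian upper bounds on $\eta_D$ imply that only pairs $(x,y)$ in a common $\sqrt{t \log(1/t)}$-neighbourhood of $\partial\widetilde{D}$ contribute, up to an $O(e^{-C/t})$ error. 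Partitioning a fixed tube around $\partial\widetilde{D}$ into disjoint neighbourhoods of each edge of $\widetilde{E}$ and each vertex of $\widetilde{V}$ (with radius less than half the minimum feature separation in $D$ and $\widetilde{D}$) then decouples the contributions up to the same error.

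The local contributions are computed on planar model domains. An open edge of length $\ell$ yields $-\ell/\pi^{1/2}\,t^{1/2}$ with no $O(t)$ term, as in the open-case calculation of \cite{vdBG16} (the model is the free plane and $\eta_D \approx p_{\R^2}$ up to exponentially small error); a Neumann edge yields nothing because $D\setminus\widetilde{D}$ is empty in its tubular neighbourhood. An open vertex of interior angle $\gamma$ reproduces the open-vertex model of \cite{vdBG16} and contributes $a(\gamma)\,t$; a Neumann vertex yields nothing.

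The new contributions come from NON and NOON vertices. Near a NON vertex with data $(\gamma,\beta)$, the local model is an infinite planar sector of opening $\gamma+\beta$ with Neumann boundary conditions on both radial edges, $\widetilde{D}$ modelling the angular subsector of angle $\gamma$ adjacent to one of them. Using a Kontorovich-Lebedev spectral representation of the Neumann heat kernel on a sector, carrying out the angular integrations and extracting the $O(t)$-coefficient in the resulting single integral in the spectral parameter yields $b(\gamma,\beta)$. At a NOON vertex with data $(\gamma,\beta,\alpha)$, the corresponding local model is a sector of opening $\gamma+\beta+\alpha$ with $\widetilde{D}$ modelled by the middle (or two outer) subsector(s). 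Writing $S_\phi$ for the subsector of angle $\phi$, the required double integral on $S_\gamma \times (S_\beta \cup S_\alpha)$ splits via the identity
\begin{equation*}
\int_{S_\gamma}\!\int_{S_\beta\cup S_\alpha} \eta = \int_{S_\gamma\cup S_\beta}\!\int_{S_\alpha} \eta + \int_{S_\gamma\cup S_\alpha}\!\int_{S_\beta}\eta - 2\int_{S_\beta}\!\int_{S_\alpha}\eta
\end{equation*}
(using symmetry of $\eta$) into two NON-type integrals in the same wedge, producing $b(\gamma+\beta,\alpha)+b(\gamma+\alpha,\beta)$, and a residual cross-integral on $S_\beta \times S_\alpha$ whose $O(t)$-coefficient is exactly the remaining integral in the stated formula for $c(\gamma,\beta,\alpha)$.

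The principal technical obstacle is step three: the explicit reduction of the Kontorovich-Lebedev integrals to the stated single-variable hyperbolic integrands for $b$ and $c$. This requires classical Bessel/Macdonald function identities, an interchange of integration orders (justified by absolute convergence coming from the $\sinh$-decay in the denominators), and careful sign-bookkeeping, especially in the NOON inclusion-exclusion. A secondary concern is ensuring that the constant $C_{D,\widetilde{D}}$ is finite and depends only on the gross geometric data (minimal separations between non-adjacent features of $D$ and $\widetilde{D}$), which requires uniform Gaussian bounds on $\eta_D$ throughout $D$.
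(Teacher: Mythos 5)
Your overall scheme — localize to a tube around $\partial\widetilde{D}$, compare with planar models, compute the vertex contributions from a Kontorovich--Lebedev representation of the wedge Green's function — is the same as the paper's. The NOON inclusion--exclusion you propose,
\begin{equation}
\int_{S_\gamma}\!\int_{S_\beta\cup S_\alpha}\eta \;=\; \int_{S_{\gamma\cup\beta}}\!\int_{S_\alpha}\eta \;+\; \int_{S_{\gamma\cup\alpha}}\!\int_{S_\beta}\eta \;-\; 2\int_{S_\beta}\!\int_{S_\alpha}\eta,
\end{equation}
is algebraically valid and is a legitimate variant of the paper's decomposition (the paper splits $\int_{S_\gamma}\!\int_{S_\gamma}$ via $\int_0^\rho\!\int_0^\rho+\int_0^\lambda\!\int_0^\lambda-2\int_0^\lambda\!\int_0^\rho$); both recover the same formula for $c(\gamma,\beta,\alpha)$, and indeed the cross-integral $\int_{S_\beta}\!\int_{S_\alpha}$ has angular integral $\frac{\sinh(\pi\theta)}{\theta^2\sinh(\sigma\theta)}\bigl[\cosh((\beta+\alpha)\theta)-\cosh((\beta-\alpha)\theta)\bigr]$, matching the residual term.

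The genuine gap is that you assert, without mechanism, that $\eta_D$ can be replaced by the model kernel ($p_{\R^2}$, $\eta_{\mathbb H}$, or $\eta_{W_\sigma}$) in each local piece up to an exponentially small error. Gaussian upper bounds on $\eta_D$ suffice to show far-field contributions are negligible, but they do \emph{not} yield the needed near-field comparison with a model Neumann kernel: domain monotonicity fails for Neumann heat kernels in general, and the known Neumann comparison results are not strong enough here. This is the hard step, and it is what Section~4 of the paper is for: it constructs reflecting Brownian motion on a branched cover \`a la Gallavotti--McKean and uses isometry of the preimages of locally-matching neighbourhoods to equate exit-time laws, then converts this into a pointwise kernel comparison $\bigl|\int_A \eta_D - \int_{A'}\eta_{\text{model}}\bigr|\leq 4e^{-\delta^2/8t}$ (Lemma~\ref{lem:probabilistic_comparisons}). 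Your proposal needs an argument of this kind, and ``standard Gaussian upper bounds'' do not provide it.

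A secondary omission: when you partition the tube around $\partial\widetilde{D}$ into ``disjoint'' edge and vertex neighbourhoods, the pieces do not actually tile the tube — there are leftover cusp-shaped regions between the vertex discs and the edge rectangles. The paper handles these explicitly as cusps $C_v^{(j)}(R,\delta)$, and they are not negligible: the open cusps carry a $t^{1/2}$-order term $\frac{R}{\pi^{1/2}}t^{1/2}\int_1^\infty\frac{dv}{v^2}\int_0^1 dy\,\frac{y}{(1-y^2)^{1/2}}e^{-R^2 y^2 v^2/4t}$ that must cancel a corresponding term in the sector contribution (the cancellation is the mechanism by which the radius $R$ drops out of the final coefficient). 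Your write-up should make this partition exact and track the cusp contributions, or argue explicitly that your chosen tube-decomposition avoids them.
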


\begin{rem}
If $D$ were to have finitely many connected components, then the heat content of $\widetilde{D}$ would be the sum of its heat content on each of the connected components and thus Theorem \ref{thm:main} can be applied on each connected component. Theorem \ref{thm:main} can also be generalised to apply to vertices with an arbitrary number of incident edges (See Appendix A) and to the case where $D$ can have interior angles of angle $2\pi$ or where the heat content of $\widetilde{D}$ is considered in $\mathbb{R}^2 \setminus \partial D_{+}$ where $\partial D_{+} \subset \partial D$ subject to a generalised Neumann boundary condition (see Appendix B). We omit proofs of these in the main body of the paper for a clearer presentation.
\end{rem}

It was observed in \cite{vdBG16} that the function $a(\gamma)$, which corresponds to the contribution from the angles at an open vertex, is symmetric with respect to $\pi$. We observe that the function $b(\gamma, \beta)$, which corresponds to the contribution from the angles at a NON vertex, is symmetric in $\beta$ and $\gamma$, and that the function $c(\gamma,\beta,\alpha)$, which corresponds to the contribution from the angles at a NOON vertex, is symmetric in $\alpha$ and $\beta$ as mentioned above in (iii).
By these symmetry properties of $a(\gamma)$, $b(\gamma, \beta)$, and $c(\gamma, \beta, \alpha)$, we have the following consequence of Theorem \ref{thm:main}. If $\widetilde{D}_{1},\widetilde{D}_{2}$ are two polygonal subdomains of a bounded polygonal domain $D$ such that $\widetilde{D}_{2} = D \setminus \overline{\widetilde{D}_{1}}$, then
\begin{equation}\label{eqn:isoflow_condition}
    \left| \left(H_{D,\widetilde{D}_{1}}(t) - |\widetilde{D}_{1}|\right)-\left(H_{D,\widetilde{D}_{2}}(t) - |\widetilde{D}_{2}|\right)\right| = O(e^{-C/t})
\end{equation} for some constant $C>0$. Moreover, if $|\widetilde{D}_{1}|=|\widetilde{D}_{2}|$, then the heat contents of $\widetilde{D}_{1}$, $\widetilde{D}_{2}$ have the same long-time asymptotic behaviour and the same small-time asymptotic behaviour up to an exponentially small remainder.

Considering reflections of NON and NOON wedges with respect to a Neumann edge motivates the following relations between $a(\gamma)$, $b(\gamma,\beta)$, and $c(\gamma,\beta,\alpha)$.
\begin{prop}
For $\gamma,\beta,\alpha \in (0,\pi)$, we have the following.
\begin{itemize}
    \item[(i)] If $\gamma + \beta < \pi$, then $
    c(2\gamma,\beta,\beta) = 2b(\gamma,\beta)$; \vspace{0.5em}
    \item[(ii)]
    $b(\gamma,\pi-\gamma) = \frac{1}{2}a(2\gamma)$; \vspace{0.5em}
    \item[(iii)] If $\gamma + \beta + \alpha = \pi$ and $\alpha \leq \beta$, then $2 c(\gamma,\beta,\alpha) = 2a(\gamma) + 2k(2\alpha,\gamma,\gamma)$ and $2c(\gamma,\beta,\alpha) =  a(2\alpha) + a(2\beta) + 2k(\gamma,2\alpha,2\beta)$,
where the function $k(\alpha,\theta,\sigma)$ is given in \cite{vdBG16} as part of the open case when more than two open edges meet at a vertex, and is defined as
\begin{equation}
\begin{split}
    k(\alpha, \theta,\sigma) & := \frac{1}{2\pi}\big[-(\sigma+\theta+\alpha - \pi)\cot(\sigma+\theta+\alpha)-(\alpha-\pi) \cot(\alpha) \\
    & \qquad \quad + (\sigma+\alpha-\pi)\cot(\sigma+\alpha) + (\theta+\alpha-\pi)\cot(\theta+\alpha)\big]
\end{split}
\end{equation} for $\sigma+\theta+\alpha \neq \pi$, $\alpha \neq \pi$, $\sigma+\alpha \neq \pi$, $\theta+\alpha \neq \pi$. In any of the remaining cases, such as $\alpha = \pi$, $k(\alpha,\theta,\sigma)$ is defined by taking appropriate limits via l'H\^{o}pital's rule.
\end{itemize}
\end{prop}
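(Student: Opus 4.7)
The strategy throughout is to use the reflection principle for Neumann boundary conditions, together with the local nature of the vertex contributions in Theorem \ref{thm:main}: each coefficient $a(\gamma)$, $b(\gamma,\beta)$, $c(\gamma,\beta,\alpha)$ depends only on the local angular configuration at the corresponding vertex and can be read off from a wedge model in $\mathbb{R}^2$ (modulo exponentially small corrections).

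For (i), consider the NOON wedge model with angles $(2\gamma,\beta,\beta)$: a sector of $\mathbb{R}^2$ of angle $2\gamma+2\beta$ with Neumann conditions on the two outer rays, in which $\widetilde{D}$ is the middle sub-sector of angle $2\gamma$ bounded by two open rays. This configuration is invariant under reflection across the bisector of the middle angle, and both $\mathds{1}_{\widetilde{D}}$ and the boundary data are preserved by this reflection. Uniqueness then forces $\partial u/\partial n$ to vanish on the bisector, so restricting to one half recovers precisely the NON wedge model with angles $(\gamma,\beta)$, the bisector playing the role of a Neumann edge. The heat content of $\widetilde{D}$ splits into two equal contributions, and matching the $t$-coefficient of Theorem \ref{thm:main} on each side gives $c(2\gamma,\beta,\beta)=2b(\gamma,\beta)$. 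The hypothesis $\gamma+\beta<\pi$ is exactly what is needed for the NOON sector $2\gamma+2\beta<2\pi$ to be a valid wedge.

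For (ii), the condition $\gamma+\beta=\pi$ means the two Neumann rays of the NON model are collinear, so the NON vertex sits in the interior of a straight Neumann line. Reflection across this line extends the heat flow to all of $\mathbb{R}^2$ with initial datum $\mathds{1}_{\widetilde{D}\cup R(\widetilde{D})}$, and the two $\gamma$-wedges of $\widetilde{D}$ and its reflection join along their common $\beta$-sides to form a single open wedge of angle $2\gamma$. By symmetry, the heat content of $\widetilde{D}$ in the half-plane is exactly half the open heat content of $\widetilde{D}\cup R(\widetilde{D})$ in $\mathbb{R}^2$, and matching vertex contributions between Theorem \ref{thm:main} and the open-case expansion from \cite{vdBG16} gives $b(\gamma,\pi-\gamma)=\tfrac{1}{2}a(2\gamma)$.

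For (iii), the condition $\gamma+\beta+\alpha=\pi$ likewise places the NOON vertex in the interior of a straight Neumann line of $\partial D$. Reflection across this line produces a four-wedge open vertex in $\mathbb{R}^2$, at which $\widetilde{D}\cup R(\widetilde{D})$ consists of two wedges of angle $\gamma$ separated by wedges of angles $2\alpha$ and $2\beta$. The corresponding multi-wedge open-vertex contribution given in \cite{vdBG16} can be written as $2a(\gamma)+2k(2\alpha,\gamma,\gamma)$, or equivalently, by viewing the complementary pair of wedges (of angles $2\alpha$ and $2\beta$ separated by two wedges of angle $\gamma$) and using the $a$- and $k$-symmetries, as $a(2\alpha)+a(2\beta)+2k(\gamma,2\alpha,2\beta)$. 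Halving each expression to account for the reflection symmetry yields the two stated identities for $2c(\gamma,\beta,\alpha)$. The main obstacle is extracting the correct form of the multi-wedge open-vertex contribution from \cite{vdBG16} and verifying the equality of the two equivalent forms at the level of the $a$ and $k$ integrals; once that identification is in hand, the three identities follow formally from the reflection arguments above by matching coefficients of $t$ in the small-time expansion.
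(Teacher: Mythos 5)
Your approach is genuinely different from the paper's. The paper proves the proposition by direct manipulation of the integral formulas for $b$ and $c$: for (i) it rewrites the integrand using hyperbolic identities and matches against the definition of $\hat c$, while for (ii) and (iii) it uses the explicit closed-form identity (formula 3.511.9 of \cite{GR15})
\begin{equation}
\int_{0}^{\infty} d\theta \; \frac{\cosh(\tfrac{\pi}{2}\theta)\left(\cosh(z\theta)-1\right)}{\sinh(\pi\theta)\sinh(\tfrac{\pi}{2}\theta)} = \frac{1}{\pi}-\frac{z}{\pi}\cot(z), \qquad |z|<\pi,
\end{equation}
applied once for (ii) and four times for (iii). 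That proof is elementary in the sense that it is self-contained: it uses only the definitions of $a,b,c,k$ and a Gradshteyn--Ryzhik formula, and it is logically independent of Theorem~\ref{thm:main} and of anything in \cite{vdBG16}. You instead argue via the reflection principle and the identification of $b$ and $c$ as heat-content vertex coefficients. This is exactly the heuristic the paper states in the Remark following the Proposition (``These identities ultimately arise from how one can obtain Neumann heat kernels from the method of images''), so the intuition is right; the trade-off is that your proof is logically downstream of Theorems~\ref{NON_wedge} and \ref{NOON_wedge} (or of Theorem~\ref{thm:main}) and of the open-vertex expansion in \cite{vdBG16}, rather than a purely formula-level manipulation.

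There is, however, a concrete gap. For (i) and (ii) the reflection-symmetry arguments are sound in principle, but you never actually match the model expansions term by term to verify that the area, $t^{1/2}$, and cusp-correction terms cancel exactly so that the comparison collapses to the $t$-coefficient alone; this is straightforward but needs to be said. More seriously, for (iii) the identity you want to establish is precisely $2c(\gamma,\beta,\alpha)=2a(\gamma)+2k(2\alpha,\gamma,\gamma)$ (and its second form), yet you say ``the main obstacle is extracting the correct form of the multi-wedge open-vertex contribution from \cite{vdBG16} and verifying the equality of the two equivalent forms at the level of the $a$ and $k$ integrals'' and then stop. That obstacle \emph{is} the content of the claim for (iii): without writing down the \cite{vdBG16} multi-wedge formula for the configuration (two $\gamma$-wedges separated by wedges of angles $2\alpha$ and $2\beta$) and checking it reduces to the stated expressions, you have only shown that $2c(\gamma,\beta,\alpha)$ equals \emph{some} open-vertex coefficient, not the specific combination $2a(\gamma)+2k(2\alpha,\gamma,\gamma)$. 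The paper avoids this entirely by computing $a(\gamma)+k(2\alpha,\gamma,\gamma)$ directly from the cotangent identity and showing it matches the defining integral for $c(\gamma,\beta,\alpha)$. To complete your route you would need to carry out the \cite{vdBG16} bookkeeping, or substitute the direct integral computation of (iii) for that step.
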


\begin{rem}
These identities ultimately arise from how one can obtain Neumann heat kernels from the method of images. In particular, (ii) and (iii) arise from the Neumann heat kernel for the half-plane and so we have the relation with the angular contributions in the open case.
\end{rem}

\begin{proof}
We prove (i) by direct calculation, namely
\begin{equation}
        \begin{split}
            b(\gamma,\beta) & = \int_{0}^{\infty} d\theta \; \frac{\cosh\left(\frac{\pi}{2}\theta\right)\cosh\left(\left(\gamma-\beta\right)\theta\right)-\cosh\left(\left(\frac{\pi}{2}-\gamma-\beta\right)\theta\right)}{2\sinh\left(\left(\gamma+\beta\right)\theta\right)\sinh\left(\frac{\pi}{2}\theta\right)} \\
            & = \int_{0}^{\infty} d\theta \; \frac{\cosh\left(\frac{\pi}{2}\theta\right)\left(\cosh(2\gamma\theta)+\cosh(2\beta\theta)-1\right)-\cosh\left(\left(\frac{\pi}{2}-2\gamma-2\beta\right)\theta\right)}{2\sinh\left(2\left(\gamma+\beta\right)\theta\right)\sinh\left(\frac{\pi}{2}\theta\right)} \\
            & = b(2\gamma+\beta,\beta) + \int_{0}^{\infty} d\theta \; \frac{\cosh\left(\frac{\pi}{2}\theta\right)\left[\cosh\left(2\beta\theta\right)-1\right]}{2\sinh\left(2\left(\gamma+\beta\right)\theta\right)\sinh\left(\frac{\pi}{2}\theta\right)} \\
            & = \frac{1}{2} c(2\gamma,\beta,\beta).
        \end{split}
\end{equation}

For (ii) and (iii) we require an additional identity. For $|z| < |\pi|$, we have that by formula 3.511.9 in \cite{GR15}

\begin{equation}\label{eqn:cot_integral}
    \begin{split}
        \int_{0}^{\infty} d\theta \; \frac{\cosh(\frac{\pi}{2}\theta) \left(\cosh(z \theta)-1\right)}{\sinh(\pi \theta)\sinh(\frac{\pi}{2}\theta)} = \frac{2}{\pi} \int_{0}^{\infty} dx \; \frac{\sinh^{2}(\frac{z}{\pi}x)}{\sinh^{2}(x)} = \frac{1}{\pi} - \frac{z}{\pi}\cot(z).
    \end{split}
\end{equation}

We observe that (ii) clearly holds for $\gamma = \frac{\pi}{2}$. The case $\gamma \neq \frac{\pi}{2}$ is immediate from \eqref{eqn:cot_integral} by
\begin{equation}
    b(\gamma,\pi-\gamma) = \int_{0}^{\infty} d\theta \; \frac{\cosh(\tfrac{\pi}{2}\theta)\left(\cosh((\pi-2\gamma)\theta)-1\right)}{2\sinh(\pi\theta)\sinh(\tfrac{\pi}{2}\theta)}  = \frac{1}{2\pi} + \frac{1}{2}(1-\tfrac{2\gamma}{\pi})\cot(2\gamma) = \frac{1}{2}a(2\gamma).
\end{equation}

For (iii) it is sufficient to show that this identity holds when $2\gamma+2\alpha \neq \pi$, $2\alpha \neq \pi$, $
\gamma + 2\alpha \neq \pi$. By four uses of \eqref{eqn:cot_integral}, we see that

\begin{equation}
\begin{split}
    a(\gamma) + k(2\alpha,\gamma,\gamma) & = a(\gamma) + \frac{1}{2\pi}\big[-(\gamma+\alpha - \beta)\cot(\gamma+\alpha - \beta)-(\gamma+\beta-\alpha) \cot(\gamma+ \beta -\alpha) \\
    & \qquad \qquad \qquad + 2(\beta - \alpha)\cot(\beta-\alpha)\big] \\
    & = \int_{0}^{\infty} d\theta \; \frac{\cosh(\frac{\pi}{2}\theta) \left(\cosh((\gamma-\beta+\alpha) \theta)-1\right)}{2\sinh(\pi \theta)\sinh(\frac{\pi}{2}\theta)} \\
    & \quad + \int_{0}^{\infty} d\theta \; \frac{\cosh(\frac{\pi}{2}\theta)  \left(\cosh((\gamma+\beta-\alpha) \theta)-1\right)}{2\sinh(\pi \theta)\sinh(\frac{\pi}{2}\theta)} \\
    & \quad + \int_{0}^{\infty} d\theta \; \frac{\cosh(\frac{\pi}{2}\theta)  \left(\cosh(\pi-\gamma) \theta)-\cosh(\beta-\alpha) \theta)\right)}{\sinh(\pi \theta)\sinh(\frac{\pi}{2}\theta)} \\
    & = b(\gamma+\beta,\alpha) + b(\gamma+\alpha,\beta) \\
    & \quad + \int_{0}^{\infty} d\theta \; \frac{\cosh(\frac{\pi}{2}\theta)  \left(\cosh(\beta+\alpha) \theta)-\cosh(\beta-\alpha) \theta)\right)}{\sinh(\pi \theta)\sinh(\frac{\pi}{2}\theta)} \\
    & = c(\gamma,\beta,\alpha),
\end{split}
\end{equation} as desired. The other identity for $c(\gamma, \beta, \alpha)$ follows similarly.
\end{proof}

As a consequence of this proposition, for a rectangle $R$ with polygonal subdomain $\widetilde{D}$, we can choose one of its edges for reflection. We take the union of $R$ with its image under this reflection, call this $R_{1}$, and the union of $\widetilde{D}$ with its image under this reflection and call it $\widetilde{D}_{1}$. Then we observe that $\widetilde{D}_{1}$ is a polygonal subdomain of $R_{1}$ and
\begin{equation}
    \left| H_{R_{1},\widetilde{D}_{1}}(t) - 2 H_{R,\widetilde{D}}(t)\right| = O(e^{-C/t}). \end{equation}
Using this reflection principle, one can obtain polygonal subdomains of the unit square which have the same small-time heat content expansion up to an exponentially small remainder, see Figure \ref{fig:relected_iso_heat}.

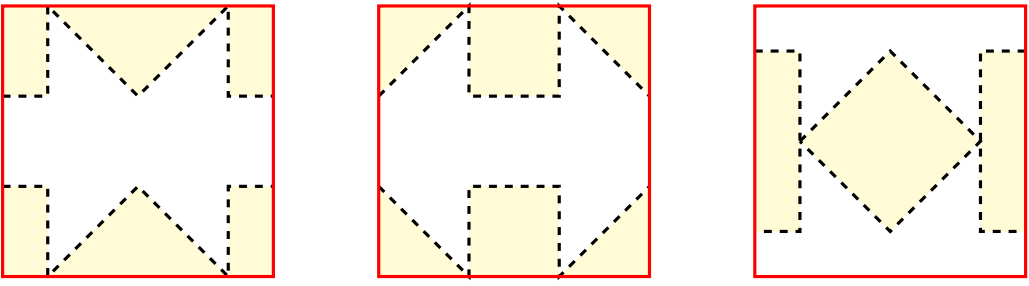
\begin{figure}
    \centering
    \begin{tikzpicture}
    \path[fill=white] (0,0) -- (0,3.6) -- (3.6,3.6) -- (3.6,0);
    \path[fill=white,xshift =5cm] (0,0) -- (0,3.6) -- (3.6,3.6) -- (3.6,0);
    \path[fill=white,xshift=10cm] (0,0) -- (0,3.6) -- (3.6,3.6) -- (3.6,0);

    \path [fill=yellow!20] (0,2.4) -- (0.6,2.4) -- (0.6,3.6) -- (1.8,2.4) -- (3,3.6) -- (3,2.4) -- (3.6,2.4) -- (3.6,3.6) -- (0,3.6) -- cycle;
    \path [fill=yellow!20] (0,1.2) -- (0.6,1.2) -- (0.6,0) -- (1.8,1.2) -- (3,0) -- (3,1.2) -- (3.6,1.2) -- (3.6,0) -- (0,0) -- cycle;
    \draw[black, very thick, dashed] (0,2.4) -- (0.6,2.4) -- (0.6,3.6) -- (1.8,2.4) -- (3,3.6) -- (3,2.4) -- (3.6,2.4);
    \draw[black, very thick, dashed] (0,1.2) -- (0.6,1.2) -- (0.6,0) -- (1.8,1.2) -- (3,0) -- (3,1.2) -- (3.6,1.2);
    \draw[red, very thick] (0,0) -- (0,3.6) -- (3.6,3.6) -- (3.6,0) -- cycle;

    \path [xshift = 5cm,fill=yellow!20] (0,2.4) -- (1.2,3.6) -- (1.2,2.4) -- (2.4,2.4) -- (2.4,3.6) -- (3.6,2.4) -- (3.6,3.6) -- (0,3.6) -- cycle;
    \path [xshift = 5cm,fill=yellow!20] (0,1.2) -- (1.2,0) -- (1.2,1.2) -- (2.4,1.2) -- (2.4,0) -- (3.6,1.2) -- (3.6,0) -- (0,0) -- cycle;
    \draw[xshift = 5cm,black,very thick,dashed] (0,2.4) -- (1.2,3.6) -- (1.2,2.4) -- (2.4,2.4) -- (2.4,3.6) -- (3.6,2.4);
    \draw[xshift = 5cm,black,very thick,dashed] (0,1.2) -- (1.2,0) -- (1.2,1.2) -- (2.4,1.2) -- (2.4,0) -- (3.6,1.2);
    \draw[xshift = 5cm, red, very thick] (0,0) -- (0,3.6) -- (3.6,3.6) -- (3.6,0) -- cycle;

    \path [xshift=10cm,fill=yellow!20] (0,3) -- (0.6,3) -- (0.6,0.6) -- (0,0.6) -- cycle;
    \path [xshift=10cm,fill=yellow!20] (3.6,3) -- (3,3) -- (3,0.6) -- (3.6,0.6) -- cycle;
    \path [xshift=10cm,fill=yellow!20] (0.6,1.8) -- (1.8,3) -- (3,1.8) -- (1.8,0.6) -- cycle;
    \draw[xshift=10cm,dashed,black, very thick] (0,3) -- (0.6,3) -- (0.6,0.6) -- (0,0.6);
    \draw[xshift=10cm,dashed,black, very thick] (3.6,3) -- (3,3) -- (3,0.6) -- (3.6,0.6);
    \draw[xshift=10cm,dashed,black, very thick] (0.6,1.8) -- (1.8,3) -- (3,1.8) -- (1.8,0.6) -- cycle;
    \draw[xshift=10cm,red, very thick] (0,0) -- (0,3.6) -- (3.6,3.6) -- (3.6,0) -- cycle;
    \end{tikzpicture}
    \caption{Three polygonal subdomains of the square with the same area and same small-time heat content expansion, up to an exponentially small remainder.}
    \label{fig:relected_iso_heat}
\end{figure}

\textbf{Outline of the paper: }
In Section 2, we detail the strategy of the proof of Theorem \ref{thm:main} using a partition of $\widetilde{D}$.
In Section 3 we compute a model heat content contribution for each part of the partition using explicit formulae for the heat kernel.
In Section 4 we prove that the difference between each of these model heat content contributions and the actual heat content contributions is exponentially small in small-time.
We note that the proofs in Section 3 rely on analytic methods whereas those in Section 4 rely on probabilistic ones via the relationship between the heat equation and Brownian motion. In the appendices, we discuss some generalisations of Theorem \ref{thm:main}.

\section{Proof of Theorem \ref{thm:main}}

The proof follows the strategy employed in the papers \cite{vdBS90}, \cite{vdBG16}, and \cite{vdBGG20}. The main idea is to partition $D$ in such a way that we can model the heat content contribution of each part of the partition by the heat content of the same part in a different ambient space to $\Omega$ for which a suitable, locally comparable heat kernel is known explicitly.

Let $\widetilde{V}$ be as defined in the last section and $V$ denote the vertices of $D$. Our first goal is to create an open sector, or the union of two open sectors at some NOON vertices, based at each vertex in $\widetilde{V}$ given by $\widetilde{S}_{v}(R):=B_{R}(v)\cap \widetilde{D}$ for some $R>0$, where $B_{R}(v)$ is the open disc in $\R^2$ centred at $v$ of radius $R$. We define the quantities \begin{equation}
    R_{1} := \frac{1}{2}\inf_{u \in \widetilde{V}\cup V} \inf_{\substack{v\in \widetilde{V}\cup V \\ v \neq u}} d(u,v), \qquad R_{2} := \frac{1}{2}\inf_{ u \in \widetilde{V}\setminus V} d(u,\partial D),
\end{equation} and set $R := \min\lbrace R_{1},R_{2}\rbrace$. The definition of $R_{1}$ ensures two things: $\widetilde{S}_{u}(R) \cap \widetilde{S}_{v}(R) = \emptyset$ for $u,v\in \widetilde{V}$ with $u\neq v$ and that $S_{u}(R) \cap S_{v}(R) = \emptyset$ also in this case, where $S_{v}(R) := B_{R}(v)\cap D$. The second point here is crucial for our comparisons. The definition of $R_{2}$ also allows us to do the required comparisons and calculations specifically for open vertices.

The next part of the partition we want to define is that of rectangles lying inside $\widetilde{D}$ for which one of its sides lies on $\partial \widetilde{D}$. Let $\widetilde{E}$ be as defined in the last section. For $\widetilde{e} \in \widetilde{E}$ with length $L(\widetilde{e})$, we want to have a rectangle of width $L(\widetilde{e}) - 2R$ and height $\delta > 0$ with a side of length $L(\widetilde{e})-2R$ lying on $\widetilde{e}$ and the rectangle lying inside $\widetilde{D}$. We denote this rectangle by $T_{\widetilde{e}}(R,\delta)$.
In order to apply our comparisons in the proceeding sections, we require that no two of these rectangles intersect so we must define a suitable choice of $\delta > 0$.
Denote the collection of interior angles of $\widetilde{D}$ by $\mathcal{A}_{1}$ and the collection exterior angles of $\widetilde{D}$ relative to $D$ at NON and NOON vertices by $\mathcal{A}_{2}$. Then define the quantities
$$\gamma_{1} := \left\{ \mu \in \mathcal{A}_{1} : \sin \frac{\mu}{2} = \min_{\kappa \in \mathcal{A}_{1}} \sin \frac{\kappa}{2},\right\}$$
and
$$\gamma_{2} := \left\{ \mu \in \mathcal{A}_{2} : \sin \frac{\mu}{2} = \min_{\kappa \in \mathcal{A}_{2}} \sin \frac{\kappa}{2},\right\}.$$
We then set $\delta_{1} := R \sin \frac{\gamma_{1}}{2}$ and $\delta_{2} := R\sin \frac{\gamma_{2}}{2}$. Then we determine $\delta$ by setting $\delta:= \min\lbrace \delta_{1},\delta_{2}\rbrace$. The definition of $\delta_{1}$ ensures that these rectangles do not overlap and the definition $\delta_{2}$ allows us to make our comparisons.

Now let $\widetilde{D}(R,\delta) := \lbrace x\in \widetilde{D} : d(x,\partial \widetilde{D}) > \delta \text{ and } d(x,\widetilde{V}) > R\rbrace$. Then we now must crucially observe that \begin{equation}
    \widetilde{D} \neq \left(\bigcup_{v\in \widetilde{V}}\widetilde{S}_{v}(R)\right) \cup \left(\bigcup_{\widetilde{e}\in \widetilde{E}}T_{\widetilde{e}}(R,\delta)\right)\cup \widetilde{D}(R,\delta)
\end{equation} even up to a set of measure zero so we have one final model space to consider. The remainder is the union of disjoint cusps which up to rigid planar motions are expressible as $\lbrace  x = (x_{1},x_{2}) \in \mathbb{R}^{2} : 0 < x_{1} < R, \vert x \vert > R, 0<x_{2} < \delta \rbrace $. The definition of $\delta$ ensures no two of these cusps overlap. Each sector $S_{v}(R)$ has two such cusps associated with it which we denote by $C_{v}^{(1)}(R,\delta)$ and $C_{v}^{(2)}(R,\delta)$. Then, up to a set of measure zero, we have that
    \begin{equation}
    \widetilde{D} = \left(\bigcup_{v\in \widetilde{V}}\widetilde{S}_{v}(R)\cup C_{v}^{(1)}(R,\delta)\cup C_{v}^{(2)}(R,\delta)\right) \cup \left(\bigcup_{\widetilde{e}\in \widetilde{E}}T_{\widetilde{e}}(R,\delta)\right)\cup \widetilde{D}(R,\delta).
\end{equation}

Now we have this partition, Theorem \ref{thm:main} follows immediately from the following theorem, which is subsequently proved in detail throughout the rest of this paper.

\begin{thm}\label{thm:model_justifications}

The following quantities are as defined above.
\begin{enumerate}[label=(\roman*)]
    \item  \begin{equation}
        \int_{\widetilde{D}(R,\delta)}dx \int_{\widetilde{D}}  dy\; \eta_{D}(t;x,y) = |\widetilde{D}(R,\delta)| + O\left(e^{-\delta^{2}/8t}\right).
    \end{equation}
    \item If $\widetilde{e} \in \widetilde{E}$ lies on $\partial D$, i.e. is a Neumann edge, then \begin{equation}
        \int_{T_{\widetilde{e}}(R,\delta)}dx\int_{\widetilde{D}} dy\; \eta_{D}(t;x,y) = |T_{\widetilde{e}}(R,\delta)| + O\left(e^{-\delta^{2}/8t}\right).
    \end{equation}
    \item If $\widetilde{e} \in \widetilde{E}$ lies in $D$, i.e. is an open edge, then \begin{equation}
        \int_{T_{\widetilde{e}}(R,\delta)} dx\int_{\widetilde{D}} dy\; \eta_{D}(t;x,y) = |T_{\widetilde{e}}(R,\delta)| - \frac{(L(\widetilde{e})-2R)}{\pi^{1/2}}t^{1/2} + O\left(e^{-\delta^{2}/8t}\right).
    \end{equation}
    \item If $C(R,\delta)$ is a cusp lying adjacent to an edge $\widetilde{e}\in\widetilde{E}$ lying on $\partial D$, i.e. is a Neumann cusp, then we have that \begin{equation}
        \int_{C(R,\delta)} dx\int_{\widetilde{D}} dy\; \eta_{D}(t;x,y) = |C(\delta,R)| + O\left(e^{-\delta^{2}/8t}\right).
    \end{equation}
    \item If $C(R,\delta)$ is a cusp lying adjacent to an edge $\widetilde{e}\in\widetilde{E}$ lying in $D$, i.e. is an open cusp, then we have that \begin{equation}
    \begin{split}
        \int_{C(R,\delta)} dx\int_{\widetilde{D}} dy\; \eta_{D}(t;x,y) & = |C(\delta,R)| - \frac{R}{\pi^{1/2}}t^{1/2}\int_{1}^{\infty}\frac{dv}{v^{2}}\int_{0}^{1}dy\frac{y}{(1-y^{2})^{\frac{1}{2}}} e^{-\frac{R^{2}y^{2}v^{2}}{4t}} \\
        & \quad +  O\left(e^{-\delta^{2}/8t}\right).
    \end{split}
    \end{equation}
    \item For $v\in \widetilde{V}$ an NN vertex \begin{equation}
        \int_{\widetilde{S}_{v}(R)} dx\int_{\widetilde{D}} dy\; \eta_{D}(t;x,y) = |\widetilde{S}_{v}(R)| +  O\left(e^{-\delta^{2}/8t}\right).
    \end{equation}
    \item For $v\in \widetilde{V}$ a NON vertex with interior angle $\gamma$ and exterior angle $\beta$, \begin{equation}
    \begin{split}
        \int_{\widetilde{S}_{v}(R)} dx\int_{\widetilde{D}} dy\; \eta_{D}(t;x,y) & = |\widetilde{S}_{v}(R)| - \frac{R}{\pi^{1/2}}t^{1/2} + b(\gamma,\beta)t  \\
        & \quad + \frac{R}{\pi^{1/2}}t^{1/2}\int_{1}^{\infty}\frac{dv}{v^{2}}\int_{0}^{1}dy\frac{y}{(1-y^{2})^{\frac{1}{2}}} e^{-R^{2}y^{2}v^{2}/4t} +
        O\left(e^{-C_{2}/t}\right),
    \end{split}
    \end{equation} where $C_{2} > 0$ is a constant depending on $R$, $\gamma$ and $\beta$.
    \item For $v\in \widetilde{V}$ a NOON vertex with middle angle $\gamma$ and exterior angles $\beta,\alpha$,
    \begin{equation}
    \begin{split}
        \int_{\widetilde{S}_{v}(R)} dx\int_{\widetilde{D}} dy\; \eta_{D}(t;x,y) & = |\widetilde{S}_{v}(R)| - \frac{2R}{\pi^{1/2}}t^{1/2} + c(\gamma,\beta,\alpha)t  \\
        & \quad + \frac{2R}{\pi^{1/2}}t^{1/2}\int_{1}^{\infty}\frac{dv}{v^{2}}\int_{0}^{1}dy\frac{y}{(1-y^{2})^{\frac{1}{2}}} e^{-R^{2}y^{2}v^{2}/4t} +
        O\left(e^{-C_{3}/t}\right),
    \end{split}
    \end{equation} where $C_{3} > 0$ is a constant depending on $R$, $\gamma$, $\beta$ and $\alpha$.
\end{enumerate}
\end{thm}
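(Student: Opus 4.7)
The plan is a two-step reduction for each of the eight parts: first, replace the Neumann heat kernel $\eta_{D}$ by an explicit \emph{model heat kernel} $\widehat{\eta}$ on a simpler domain $M$ that agrees with $D$ locally near the relevant piece $P$ of the partition, and then compute the model heat content contribution in closed form. The model choices are dictated by the piece: for the interior set $\widetilde{D}(R,\delta)$ and the Neumann-adjacent pieces (parts (ii), (iv), (vi)) the model is $\mathbb{R}^{2}$, or $\mathbb{R}^{2}$ after reflecting across the adjacent Neumann edge; for the open rectangle (iii) and open cusp (v) the model is the half-plane with the relevant open edge as the sole boundary; for the NON and NOON sectors (vii)--(viii) the model is a wedge of total opening $\gamma+\beta$, respectively $\gamma+\beta+\alpha$, with Neumann conditions on the outermost edges and the open edge(s) sitting inside.

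For the defect estimate, the key input is the probabilistic bound
$$\int_{P} dx \int_{\widetilde{D}} dy\,\bigl|\eta_{D}(t;x,y)-\widehat{\eta}(t;x,y)\bigr| \;\leq\; c_{1}\,e^{-d^{2}/(c_{2} t)},$$
where $d$ is the distance from $P$ to the set where the boundaries of $D$ and $M$ disagree. I would derive this by writing both kernels as transition densities of reflected Brownian motion and coupling the two processes until the first entry into the defect set; the probability of such an entry by time $t$ is controlled by a standard Gaussian tail, and the resulting integral of the difference inherits that decay. In parts (i)--(vi) the defect distance is $\delta$, with the constant $1/8$ in the exponent coming from the sharpest applicable reflected-Brownian tail bound, yielding the $O(e^{-\delta^{2}/8t})$ remainders; in (vii)--(viii) the defect lies outside $B_{R}(v)$, giving the stated $O(e^{-C/t})$ remainders with constants depending on $R$ and the vertex angles. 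This reduces each part to a model heat content computation.

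The model computations are elementary for (i), (ii), (iv), (vi): after the defect error is removed, $\int_{\widetilde{D}} \widehat{\eta}(t;x,\cdot)\,dy = 1$ for the relevant $x$ inside the Neumann model, giving exactly $|P|$. For (iii) the half-plane Neumann kernel (equivalently the full-plane kernel with reflection) recovers the classical open-edge expansion $|P|-\ell\,t^{1/2}/\pi^{1/2}$. For (v), passing to polar coordinates centred at the vertex isolates the cusp integral and rearranges it into the stated $(v,y)$ double integral. The substantial work is in (vii)--(viii), where I would use an explicit Neumann wedge heat kernel with open edge(s), obtained by a Kontorovich--Lebedev eigenfunction expansion in the angular variable (of Carslaw--Jaeger type). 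Substituting this into $\int_{\widetilde{S}_{v}(R)} dx \int_{\widetilde{D}} \widehat{\eta}(t;x,y)\,dy$ produces a triple integral in the radial variables $r,r'$ and the spectral parameter $\theta$; the radial integrations can be carried out using standard Mellin transforms of hyperbolic functions, collapsing the result to a single $\theta$-integral whose integrand is precisely the $\cosh/\sinh$ combination appearing in $b(\gamma,\beta)$.

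The main obstacle is thus the NOON case (viii): the two open edges split the wedge, so one must sum model contributions from each sub-sector while correctly accounting for a cross-term between the two open edges. I expect this cross-term to produce exactly the extra integral in $c(\gamma,\beta,\alpha)$ beyond $b(\gamma+\beta,\alpha)+b(\gamma+\alpha,\beta)$, consistent with the reflection identities already verified in the proposition following Theorem \ref{thm:main}. A secondary obstacle is establishing the probabilistic estimate with the sharp constant $1/8$: a naive strong Markov application gives only $e^{-c\delta^{2}/t}$ with a worse $c$, and recovering $1/8$ likely requires a careful analysis of the reflection coupling across Neumann edges, together with using that the Neumann extension of $\widehat{\eta}$ across each Neumann edge already absorbs the leading boundary reflection.
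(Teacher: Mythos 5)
Your two-step plan (model computation plus comparison) matches the paper's structure exactly, and your choices of model spaces (whole plane, Neumann half-plane, Neumann wedge $W_{\gamma+\beta}$ or $W_{\gamma+\beta+\alpha}$ with initial data supported on the open sub-sector) are the ones used in Section~3 of the paper. The Kontorovich--Lebedev expansion you propose for the wedge kernel and the decomposition $\int_\lambda^\rho\int_\lambda^\rho=\int_0^\rho\int_0^\rho+\int_0^\lambda\int_0^\lambda-2\int_0^\lambda\int_0^\rho$ that produces the NOON cross-term are likewise the paper's route. However, there are two places where your sketch is not yet a proof.

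First, in the NON and NOON model computations you treat the radial integration as a clean Mellin transform ``collapsing to a single $\theta$-integral.'' But the $x$-integral runs only over the \emph{finite} sector $\widetilde S_v(R)$, so the radial variable is cut off at $R$. The finite integral must be handled as $\int_0^R=\int_0^\infty-\int_R^\infty$, and it is the $\int_R^\infty$ tail that must be shown exponentially small. The difficulty is that after the angular integration the integrand has the form $\bigl(\cosh((\tfrac{\pi}{2}\pm\sigma\mp\cdots)\theta)-1\bigr)/(\theta\sinh(\sigma\theta))$, and the known bound (from \cite[\S2]{vdBS90}, stated in the paper as Lemma~3.5) only applies directly when the hyperbolic argument is small relative to $\sigma$, or reduces to a $\tanh$, or to a $\sinh$ with argument in $(-\pi/2,\pi/2)$. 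For general wedge opening $\sigma\in(0,2\pi)$ none of these hold automatically, and the paper supplies a covering of the angle-parameter space (Lemma~3.4) and an iterated decomposition (Lemma~3.6) to peel off $\sinh$ terms one at a time until the residual $\cosh$ term falls into the convergent range. This is a genuine missing step in your sketch: without it the claimed $O(e^{-C/t})$ remainders in parts (vii)--(viii) are not established, and the constant $C$ depending on the vertex angles in fact originates here, not from the probabilistic comparison as you state.

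Second, your worry about recovering the constant $1/8$ in the probabilistic comparison is actually dissolved, rather than resolved by a delicate coupling estimate, once one uses the Gallavotti--McKean construction: reflecting Brownian motion on the polygon is \emph{defined} as the projection of ordinary Brownian motion on the manifold of reflected copies, and the exit time from a small metric ball lifts isometrically. In part (i) the lifted exit region is literally a Euclidean ball of radius $\delta$; in (ii)--(iii) it \emph{contains} such a ball. Thus the exit-time probability is bounded by the free Brownian exit-time probability $4e^{-\delta^2/(8t)}$ with no loss of constant, and no fine analysis of reflections is needed. Your ``coupling until first entry into the defect set'' is the right intuition, but the rigorous implementation is the exact identity of the lifted processes on the isometric patch, not a soft strong-Markov argument.
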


The key point to note is that: sectors $\widetilde{S}_{v}(R)$ at
NN vertices have two associated Neumann cusps which have trivial contribution,
sectors $\widetilde{S}_{v}(R)$ at NON vertices have one associated Neumann cusp and one associated open cusp which cancels out the term
\begin{equation}
    \frac{R}{\pi^{1/2}}t^{1/2}\int_{1}^{\infty}\frac{dv}{v^{2}}\int_{0}^{1}dy\frac{y}{(1-y^{2})^{\frac{1}{2}}} e^{-R^{2}y^{2}v^{2}/4t},
\end{equation} and
sectors $\widetilde{S}_{v}(R)$ at NOON vertices have two associated open cusps which cancel out the term \begin{equation}
    \frac{2R}{\pi^{1/2}}t^{1/2}\int_{1}^{\infty}\frac{dv}{v^{2}}\int_{0}^{1}dy\frac{y}{(1-y^{2})^{\frac{1}{2}}} e^{-R^{2}y^{2}v^{2}/4t}.
\end{equation}
Analogous results also hold for the case of an open vertex with two neighbouring cusps. Indeed, we recall the following results from \cite{vdBG16}.
\begin{lemma}[Open vertex with two open cusps {\cite[Lem. 9 \& \S 4.2]{vdBG16}}]\label{open_wedge}
We have that \begin{equation}
\begin{split}
    & \int_{0}^{R} dr \; r \int_{0}^{\gamma} d\phi \int_{0}^{\infty} dr_{0} \; r_{0} \int_{0}^{\gamma} d\phi_{0}\; p_{\mathbb{R}^{2}}(t;r,\phi,r_{0},\phi_{0})
    + 2\int_{C(R,\delta)} dx\int_{\widetilde{D}} dy\; p_{\mathbb{R}^{2}}(t;x,y)\\
    & \qquad \qquad \qquad =  |\widetilde{S}_{v}(R)| +|C(R,\delta)| - \frac{2R}{\pi^{1/2}}t^{1/2} + a(\gamma) t
    + O\left(te^{-R^{2}C_{\gamma,\delta}/t}\right),
\end{split}
\end{equation} where $C_{\gamma,\delta} > 0$ is a constant depending only on $\gamma, \delta$.
\end{lemma}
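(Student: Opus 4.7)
The plan is to establish the identity by direct computation with the Gaussian kernel $p_{\mathbb{R}^2}(t;x,y) = (4\pi t)^{-1}e^{-|x-y|^2/(4t)}$, exploiting the fact that the sector $\widetilde{S}_v(R)$ together with the two adjacent open cusps effectively extend each of the two open edges incident to $v$ into a straight segment of length $R$, with various radial truncation defects cancelling between the pieces. For the sector integral $I_1$ on the left-hand side, first write
\begin{equation*}
I_1 = |\widetilde{S}_v(R)| - \int_{\widetilde{S}_v(R)} dx \int_{W^c} dy\; p_{\mathbb{R}^2}(t;x,y),
\end{equation*}
where $W$ denotes the infinite wedge of opening $\gamma$ based at $v$, and then decompose $W^c$ by inclusion--exclusion as $H_1^c \cup H_2^c$ minus the reflex wedge of opening $2\pi-\gamma$, where $H_i$ denotes the half-plane bounded by the $i$-th edge of $W$ and containing $W$.

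For each half-plane term, Fubini together with the Gaussian factorization normal to the corresponding edge reduces the computation to a one-dimensional Gaussian in the transverse direction. The $x$-integral along the edge is cut off at $r = R$; parametrising the curved part of the sector boundary in polar coordinates, this produces a contribution $-\frac{R}{\pi^{1/2}}t^{1/2}$ per edge together with a ``defect'' of the exact form
\begin{equation*}
+\frac{R}{\pi^{1/2}}t^{1/2}\int_1^{\infty}\frac{dv}{v^2}\int_0^1 dy\;\frac{y}{(1-y^2)^{1/2}}e^{-R^2 y^2 v^2/(4t)}
\end{equation*}
per edge, representing the mass of Brownian paths that would ``escape through the arc'' of the sector. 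The reflex-wedge integral is then evaluated in polar coordinates centred at $v$; after extending the radial $x$-range from $[0,R]$ to $[0,\infty)$ at cost $O(te^{-R^2 C_{\gamma,\delta}/t})$, the remaining integral factors into a radial Gaussian piece (contributing a factor of $t$ by scaling) and an angular integral over $(0,\gamma)\times(\gamma,2\pi)$, whose closed-form evaluation returns the coefficient $a(\gamma)$ as defined in \eqref{eq:e1}.

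For the two cusp integrals, observe that near $C(R,\delta)$ the set $\widetilde{D}$ locally contains the half-plane on the inner side of the open edge generating the cusp; replacing $\widetilde{D}$ by this half-plane introduces an error of order $O(e^{-\delta^2/(8t)})$, since the Euclidean distance from the cusp to the removed part of $\widetilde{D}$ is at least $\delta$. In this half-plane model, an explicit polar change of variables yields $|C(R,\delta)|$ plus a single $t^{1/2}$-order defect equal to $-\frac{R}{\pi^{1/2}}t^{1/2}\int_1^{\infty}\frac{dv}{v^2}\int_0^1 dy\,\frac{y}{(1-y^2)^{1/2}}e^{-R^2 y^2 v^2/(4t)}$ per cusp. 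Summing twice this with the sector computation, the two sector defects (one per edge) cancel precisely against the two cusp defects, leaving the claimed right-hand side with remainder $O(te^{-R^2 C_{\gamma,\delta}/t})$.

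The main obstacle is the closed-form evaluation of the reflex-wedge integral, recovering $a(\gamma) = \frac{1}{\pi} + (1-\frac{\gamma}{\pi})\cot\gamma$; this requires careful trigonometric manipulation and, for the borderline case $\gamma = \pi$, a limiting argument via l'H\^{o}pital's rule. A secondary, more technical point is the precise matching of the $t^{1/2}$ defects between the sector edges and the cusps --- the geometric content being that each cusp fills exactly the annular strip where the arc of the sector would otherwise truncate the flux through an open edge --- and the bookkeeping of all exponentially small errors (from replacing $\widetilde{D}$ by a half-plane, from truncating Gaussian tails at transverse distance $\delta$, and from extending radial integrations past $R$) into the single remainder $O(te^{-R^2 C_{\gamma,\delta}/t})$.
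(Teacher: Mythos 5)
The paper does not actually prove Lemma~\ref{open_wedge}: it is imported verbatim from \cite[Lem.~9 \& \S 4.2]{vdBG16}, so there is no in-paper proof to compare against. Your sketch is nonetheless in the spirit of the original derivation in \cite{vdBG16}, which (unlike the NON/NOON computations in Section~3 of the present paper, which go through the Kontorovich--Lebedev representation of the Neumann wedge Green's function) works directly with the free Gaussian kernel $p_{\mathbb{R}^2}$. The defect/cancellation bookkeeping you describe --- the two open cusps exactly absorbing the two
$+\tfrac{R}{\pi^{1/2}}t^{1/2}\int_1^{\infty}\tfrac{dv}{v^2}\int_0^1 dy\,\tfrac{y}{(1-y^2)^{1/2}}e^{-R^2y^2v^2/4t}$
terms coming from the curved arc of the sector --- is indeed exactly how the analogous cancellations occur in Theorem~\ref{thm:model_justifications}(v), (vii), (viii), so that part is consistent.

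However, several steps in the sketch conceal real difficulties or are outright misstated. First, the inclusion--exclusion is written incorrectly: if $W=H_1\cap H_2$, then $\mathds{1}_{W^c}=\mathds{1}_{H_1^c}+\mathds{1}_{H_2^c}-\mathds{1}_{H_1^c\cap H_2^c}$, and the overlap $H_1^c\cap H_2^c$ is the \emph{vertical} wedge opposite $W$, of opening $\gamma$, not a reflex wedge of opening $2\pi-\gamma$. Moreover $W=H_1\cap H_2$ with both $H_i\supset W$ only holds for $\gamma\le\pi$; for $\gamma>\pi$ the wedge $W$ is not contained in any half-plane bounded by one of its edges, and the decomposition must be replaced (e.g.\ by working with $W^c$, which is then convex). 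This case split is not cosmetic --- it is precisely what forces the piecewise definition of $a(\gamma)$ at $\gamma=\pi$. Second, the claim that the overlap-wedge integral alone, after rescaling, ``returns the coefficient $a(\gamma)$'' is not correct: the Gaussian does not factor in polar coordinates, and while one can perform the radial integrals first (leaving a function of $\phi-\phi_0$ to be integrated), the half-plane terms $\int_{\widetilde S_v(R)}\int_{H_i^c}p_{\mathbb{R}^2}$ also produce $O(t)$ contributions near the vertex --- the sector $\widetilde S_v(R)$ is not a rectangle, and its second radial edge makes angle $\gamma$ with $\partial H_i$ --- and these $O(t)$ pieces must be tracked and combined with the overlap-wedge term to recover $a(\gamma)$. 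As written, your proposal silently assumes each half-plane contributes only $-\tfrac{R}{\pi^{1/2}}t^{1/2}$ plus the defect with no $t$-order term; that is false, and it is where the substantive work of \cite[Lem.~9]{vdBG16} lies. Finally, you quoted an angular range $(0,\gamma)\times(\gamma,2\pi)$ for the correction integral, which is the full complement $W^c$, not the overlap wedge; be consistent about which piece you are actually evaluating after the half-plane subtraction.
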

Hence when one sums up all the heat content contributions from each part of the partition we get the desired form.

\section{Model computations of heat content}

In this section we prove some explicit heat content calculations that will act as our model heat content contributions. This will prove half of Theorem \ref{thm:model_justifications} and give the explicit coefficients with respect to the geometry that we are interested in. Full justification of why these approximations are valid, and thus completing the proof of Theorem \ref{thm:model_justifications}, is given in the next section but for this section we will only give heuristic explanations so as to aid with the intuition for the problem.

From the construction of our partition in the previous section we see that $\widetilde{D}(R,\delta)$ is compactly contained in $\widetilde{D}$ and thus in $D$ and so in small-time heat should flow similar to as it would if we simply replaced $D$ by $\mathbb{R}^{2}$. Thus, the model content contribution from $\widetilde{D}(R,\delta)$ is given by \begin{equation}
    \int_{\widetilde{D}(R,\delta)}dx\int_{\widetilde{D}} dy \; p_{\R^{2}}(t;x,y)
\end{equation} where $p_{\R^{2}}(t;x,y)$ is the heat kernel for $\R^{2}$. For any $x\in \widetilde{D}(R,\delta)$, we have that $d(x,\partial \widetilde{D}) \geq \delta$ hence\begin{equation}
    \begin{split}
        1\geq \int_{\widetilde{D}} dy\; p_{\mathbb{R}^{2}}(t;x,y) & = \int_{\mathbb{R}^{2}} dy\; p_{\mathbb{R}^{2}}(t;x,y) - \int_{\mathbb{R}^{2}\backslash\widetilde{D}} dy \; p_{\mathbb{R}^{2}}(t;x,y) \\
        & = 1 - (4\pi t)^{-1} \int_{\mathbb{R}^{2}\backslash\widetilde{D}} dy\; e^{-\vert x-y\vert^{2}/4t} \\
        & \geq  1-(4\pi t)^{-1} e^{-\frac{\delta^{2}}{8t}} \int_{\mathbb{R}^{2}}dy \; e^{-\vert x-y\vert^{2}/8t} \\
        & = 1 - 2 e^{-\delta^{2}/8t}.
    \end{split}
\end{equation}
This is the analogue of the `principle of not feeling the boundary' that was introduced in \cite{K66}, see also \cite[Prop. 9(i)]{vdB13}. The idea is that in small-time on the interior we should not detect any heat loss.
Hence, as in \cite[Lem. 4]{vdBG16}, it follows that \begin{equation}
    \left\vert  \int_{\widetilde{D}(R,\delta)}dx\int_{\widetilde{D}} dy \; p_{\R^{2}}(t;x,y) - \vert\widetilde{D}(R,\delta)\vert \right\vert \leq 2 e^{-\delta^{2}/8t}.
\end{equation}

For an edge $\widetilde{e}\in \widetilde{E}$ lying in $D$, in small-time the rectangle $T_{\widetilde{e}}(R,\delta)$ should almost look like it is living in $\mathbb{R}^{2}$ but instead in the case that the initial datum is the indicator function of the half-plane containing $T_{\widetilde{e}}(R,\delta)$ whose boundary contains $\widetilde{e}$. Up to a rigid planar motion, this reads:

\begin{lemma}[{\cite[\S 4.1]{vdBG16}}]\label{lem:model_open_hp}
Let $T = (0,L(\widetilde{e})-2R) \times (0,\delta)$. Then we have that \begin{equation}
    \int_{T}dx\int_{\mathbb{H}}dy\; p_{\mathbb{R}^{2}}(t;x,y) = |T| - \frac{L(\widetilde{e})-2R}{\pi^{1/2}}t^{1/2}+O\left(t^{1/2}e^{-\delta^{2}/8t}\right),
\end{equation} where $\mathbb{H}$ denotes the half-plane $\mathbb{R} \times \mathbb{R}_{> 0}$.
\end{lemma}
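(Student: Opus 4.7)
The plan is a direct computation via Fubini and the explicit Gaussian form of $p_{\mathbb{R}^2}$. For fixed $x = (x_1, x_2) \in T$, the integral over $\mathbb{H}$ factorises into a product of one-dimensional Gaussian integrals:
\begin{equation*}
    \int_\mathbb{H} p_{\mathbb{R}^2}(t; x, y)\, dy = \left(\int_{-\infty}^{\infty} \frac{e^{-(x_1 - y_1)^2/(4t)}}{\sqrt{4\pi t}}\, dy_1\right)\left(\int_0^{\infty} \frac{e^{-(x_2 - y_2)^2/(4t)}}{\sqrt{4\pi t}}\, dy_2\right) = 1 - \tfrac{1}{2}\mathrm{erfc}\!\left(\tfrac{x_2}{\sqrt{4t}}\right),
\end{equation*}
since the first factor equals $1$ and the second, after the substitution $s = (y_2 - x_2)/\sqrt{4t}$, simplifies as above (using $x_2 > 0$).

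Next, I would integrate over $T = (0, L(\widetilde{e}) - 2R) \times (0, \delta)$. The $x_1$-integration contributes the factor $L(\widetilde{e}) - 2R$, leaving
\begin{equation*}
    \int_T \int_\mathbb{H} p_{\mathbb{R}^2}(t;x,y)\, dy\, dx = |T| - \frac{L(\widetilde{e}) - 2R}{2} \int_0^\delta \mathrm{erfc}\!\left(\tfrac{x_2}{\sqrt{4t}}\right)\, dx_2.
\end{equation*}
Integration by parts yields the primitive $\int \mathrm{erfc}(u)\, du = u\,\mathrm{erfc}(u) - \pi^{-1/2} e^{-u^2}$, so after substituting $u = x_2/\sqrt{4t}$ one obtains
\begin{equation*}
    \int_0^\delta \mathrm{erfc}\!\left(\tfrac{x_2}{\sqrt{4t}}\right) dx_2 = \frac{2 t^{1/2}}{\pi^{1/2}} - \frac{2 t^{1/2}}{\pi^{1/2}} e^{-\delta^2/(4t)} + \delta\, \mathrm{erfc}\!\left(\tfrac{\delta}{\sqrt{4t}}\right).
\end{equation*}

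Substituting back and reading off the coefficients produces the leading term $|T|$ and the $t^{1/2}$-coefficient $-(L(\widetilde{e}) - 2R)/\pi^{1/2}$ exactly, with two remainder pieces: one multiple of $t^{1/2} e^{-\delta^2/(4t)}$ and one multiple of $\delta\,\mathrm{erfc}(\delta/\sqrt{4t})$. The standard tail bound $\mathrm{erfc}(a) \leq (a\sqrt{\pi})^{-1} e^{-a^2}$ for $a > 0$ controls the latter by a constant times $t^{1/2} e^{-\delta^2/(4t)}$, and the elementary inequality $e^{-\delta^2/(4t)} \leq e^{-\delta^2/(8t)}$ places both error terms in $O(t^{1/2} e^{-\delta^2/(8t)})$, as claimed. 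There is no genuine obstacle here: the argument is a one-variable Gaussian computation with a routine $\mathrm{erfc}$ tail bound, and the weakened exponent gives ample slack. The only thing one must be careful about is arithmetic bookkeeping of the prefactors so that the $\pi^{-1/2}$ in front of the $t^{1/2}$-term comes out on the nose.
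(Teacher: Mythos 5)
Your computation is correct: the factorisation into one-dimensional Gaussians, the identification $\int_{\mathbb{H}} p_{\mathbb{R}^2}(t;x,y)\,dy = 1 - \tfrac12\mathrm{erfc}(x_2/\sqrt{4t})$, the antiderivative of $\mathrm{erfc}$, and the tail bound $\mathrm{erfc}(a) \le (a\sqrt{\pi})^{-1}e^{-a^2}$ all check out, and the two remainder pieces are each $O(t^{1/2}e^{-\delta^2/(4t)}) \subset O(t^{1/2}e^{-\delta^2/(8t)})$. The paper does not reprove this lemma — it cites it verbatim from \cite[\S 4.1]{vdBG16} — and your direct Gaussian argument is precisely the standard derivation one expects behind that citation.
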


The same can be done for a cusp $C(R,\delta)$ lying adjacent to an open edge and here one has:

\begin{lemma}[{\cite[Lem. 2.3]{vdBGG20}}] \label{lem:open_cusp}
\begin{equation}
\begin{split}
    \int_{C(R,\delta)} dx \int_{\mathbb{H}} dy \; p_{\mathbb{R}^{2}}(t;x,y) & = |C(R,\delta)| - \frac{R}{\pi^{1/2}}t^{1/2}\int_{1}^{\infty}\frac{dv}{v^{2}}\int_{0}^{1}dy\frac{y}{(1-y^{2})^{1/2}} e^{-R^{2}y^{2}v^{2}/4t} \\ &\quad  + O\left(t^{1/2}e^{-\delta^{2}/4t}\right).
\end{split}
\end{equation}
\end{lemma}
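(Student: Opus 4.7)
The plan is to integrate the $y$-variable first (trivial because $\mathbb{H}$ is a half-plane), then reduce the remaining one-dimensional calculation to the stated form via an integral representation of the complementary error function followed by a geometric change of variable.

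Since $p_{\mathbb{R}^2}(t;x,y)$ factorises and the $y_1$-integral over $\mathbb{R}$ equals $1$, I would first compute
\[\int_{\mathbb{H}} p_{\mathbb{R}^2}(t;x,y)\,dy = \int_0^\infty (4\pi t)^{-1/2} e^{-(x_2-y_2)^2/4t}\,dy_2 = 1 - \int_{x_2}^\infty (4\pi t)^{-1/2} e^{-u^2/4t}\,du.\]
The constant $1$ contributes exactly $|C(R,\delta)|$. Parametrising $C(R,\delta) = \{(x_1,x_2):0<x_1<R,\ \sqrt{R^2-x_1^2}<x_2<\delta\}$, it remains to analyse $I(t):=\int_0^R dx_1 \int_{\sqrt{R^2-x_1^2}}^\delta dx_2 \int_{x_2}^\infty (4\pi t)^{-1/2} e^{-u^2/4t}\,du$. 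I would then replace the upper limit $\delta$ by $\infty$: swapping $x_2$ and $u$ bounds the discarded piece by $R\int_\delta^\infty (4\pi t)^{-1/2}(u-\delta)e^{-u^2/4t}\,du \le R\sqrt{t/\pi}\,e^{-\delta^2/4t}$, which accounts for the $O(t^{1/2}e^{-\delta^2/4t})$ remainder.

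The crux is the identity $\tfrac{1}{2}\mathrm{erfc}(a)=\pi^{-1/2}a\int_1^\infty e^{-a^2v^2}\,dv$ (obtained from $s=av$ in the defining integral). Applying it with $a=x_2/(2\sqrt{t})$ to the extended version of $I(t)$, Fubini lets me carry out the inner $x_2$-integral in closed form, $\int_{\sqrt{R^2-x_1^2}}^\infty x_2 e^{-x_2^2v^2/4t}\,dx_2 = (2t/v^2)e^{-(R^2-x_1^2)v^2/4t}$, which yields $\sqrt{t/\pi}\int_1^\infty (dv/v^2)\int_0^R e^{-(R^2-x_1^2)v^2/4t}\,dx_1$. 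The substitution $x_1=R\sqrt{1-y^2}$ (so $dx_1=-Ry(1-y^2)^{-1/2}\,dy$ and $R^2-x_1^2=R^2y^2$) then converts this into the claimed main term $\tfrac{R}{\sqrt{\pi}}t^{1/2}\int_1^\infty (dv/v^2)\int_0^1 y(1-y^2)^{-1/2}e^{-R^2y^2v^2/4t}\,dy$.

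The main obstacle is spotting the integral representation of $\mathrm{erfc}$ that introduces the auxiliary variable $v$: a naive direct evaluation of the $x_2$-integral produces terms involving $(u-\sqrt{R^2-x_1^2})e^{-u^2/4t}$ that do not separate into the asserted product-of-integrals form. Once the $v$-variable has been introduced, the choice $x_1=R\sqrt{1-y^2}$ is essentially forced by the factor $R^2-x_1^2$ in the exponent, since only this substitution generates the Jacobian $y/\sqrt{1-y^2}$ that matches the target.
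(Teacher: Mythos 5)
Your proof is correct. The paper itself does not re-derive this lemma --- it is quoted verbatim from \cite[Lem.~2.3]{vdBGG20} --- so there is no in-paper argument to compare against, but your derivation is the natural direct route and almost certainly matches the cited source in spirit. The key steps all check out: the $y$-integral over $\mathbb{H}$ reduces to $1-\tfrac12\mathrm{erfc}(x_2/(2\sqrt{t}))$; extending the $x_2$-range to infinity costs at most $R\int_\delta^\infty (4\pi t)^{-1/2}(u-\delta)e^{-u^2/4t}\,du \le R\sqrt{t/\pi}\,e^{-\delta^2/4t}$ after a Fubini swap; the identity $\tfrac12\mathrm{erfc}(a)=\pi^{-1/2}a\int_1^\infty e^{-a^2v^2}\,dv$ introduces the auxiliary $v$; and the inner $x_2$-integral evaluates to $(2t/v^2)e^{-(R^2-x_1^2)v^2/4t}$, after which $x_1=R\sqrt{1-y^2}$ produces exactly the Jacobian $Ry(1-y^2)^{-1/2}$ and the exponent $R^2y^2v^2/4t$ appearing in the statement. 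The only cosmetic point worth flagging is that when the $x_2$-range is extended to $\infty$ one should note (as you effectively do by working with $\max(\sqrt{R^2-x_1^2},\delta)$ implicitly) that the original inner integral is taken to be zero whenever $\sqrt{R^2-x_1^2}>\delta$, so the discarded piece is indeed bounded by the $\int_\delta^\infty$ expression you wrote down.
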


Now if $\widetilde{e}\in \widetilde{E}$ lies on $\partial D$, then in small-time both the rectangle $T_{\widetilde{e}}(R,\delta)$ and any cusp $C(R,\delta)$ should look like they are lying in a half-plane with Neumann boundary condition imposed and initial datum of the indicator function of the half-plane.

Explicitly, one can obtain the heat kernel of the half-plane $\mathbb{H}$ with Neumann boundary condition imposed to be \begin{equation}
    \eta_{\mathbb{H}}(t;x,y) := p_{\mathbb{R}^{2}}(t;x,y) + p_{\mathbb{R}^{2}}(t;x,y^{*})
\end{equation} where $y^{*}=(y_{1},-y_{2})$. It is immediate from standard Gaussian integrals that \begin{equation}
    \int_{\mathbb{H}}dy \;\eta_{\mathbb{H}}(t;x,y) = 1
\end{equation} and hence for $A = T_{\widetilde{e}}(R,\delta)$ or $A = C(R,\delta)$, we see that \begin{equation}
    \int_{A}dx\int_{\mathbb{H}}dy \;\eta_{\mathbb{H}}(t;x,y) = |A|.
\end{equation} So, as expected from physical intuition, we see that the phenomenon of not feeling the boundary does also occur near edges with solely Neumann boundary condition imposed.

The remaining model computations are those of the sectors $\widetilde{S}_{v}(R)$, however these are much more technically involved. For a vertex $v\in \widetilde{V}$ lying on $\partial D$ we model the heat content of the sector $\widetilde{S}_{v}(R)$, by the heat content of the same sector but instead lying in an infinite wedge with Neumann boundary conditions imposed obtained by stretching $S_{v}(R)$ out to infinity radially outwards from $v$ and initial datum of $\widetilde{S}_{v}(R)$ stretched out to infinity in the same way. The idea is that the heat content of $\widetilde{S}_{v}(R)$ in each situation should be similar.

For $0 < \gamma < 2\pi$, the Green's function for the heat equation with Neumann boundary conditions on the infinite wedge $W_{\gamma}:= \lbrace (r,\phi) : r > 0, 0 < \phi < \gamma\rbrace $, that is the solution to \begin{equation}
    \begin{cases}
    sG_{W_{\gamma}} - \partial_{rr}G_{W_{\gamma}} - r^{-1}\partial_{r}G_{W_{\gamma}} - r^{-2}\partial_{\phi\phi}G_{W_{\gamma}} = r^{-1}\delta(r-r_{0})\delta(\phi-\phi_{0}), \\
    \partial_{\phi}G_{W_{\gamma}} = 0, \; \phi = 0,\gamma
    \end{cases}
\end{equation} can be computed explicitly to be \begin{equation}
    G_{W_{\gamma}}(s;r,\phi,r_{0},\phi_{0}) = \frac{1}{\pi^{2}} \int_{0}^{\infty} d\theta \; K_{i\theta}(r\sqrt{s}) K_{i\theta}(r_{0}\sqrt{s}) \Phi_{\gamma}(\theta,\phi,\phi_{0})
\end{equation} where \begin{equation}
\begin{split}
    \Phi_{\gamma}(\theta,\phi,\phi_{0}) & = \cosh\left(\left(\pi - |\phi_{0}-\phi|\right)\theta\right) +\frac{\sinh\left(\pi\theta\right)}{\sinh\left(\gamma\theta\right)} \cosh\left(\left(\phi+\phi_{0}-\gamma \right)\theta\right) \\
    & \qquad +\frac{\sinh\left(\left(\pi-\gamma\right)\theta\right)}{\sinh\left(\gamma\theta\right)} \cosh\left(\left(\phi-\phi_{0} \right)\theta\right),
\end{split}
\end{equation}
see, for example, \cite[Appendix A]{NRS19} and references therein.
Here the $K_{\nu}$ are modified Bessel functions of the second kind, that is the unique solution to the equation \begin{equation}
    z^{2}K_{\nu}''(z) + zK_{\nu}'(z)-(z^{2}+\nu^{2})K_{\nu}(z) = 0.
\end{equation}
We note that the above approach expresses the Green’s function of an infinite wedge as a Kontorovich Lebedev transform. This approach was used by D. B. Ray to compute the angular contribution to the small-time asymptotic expansion of the Dirichlet heat trace for a polygon (see the footnote on page 44 of \cite{MS67}) as well as in \cite{vdBGG20,vdBS88, vdBS90}.
The unique Neumann heat kernel $\eta_{W_{\gamma}}$ on $W_{\gamma}$ is given by the inverse Laplace transform of $G_{W_{\gamma}}$ i.e. \begin{equation}
    \eta_{W_{\gamma}}(t;r,\phi,r_{0},\phi_{0}) = \mathcal{L}^{-1}\left\lbrace G_{W_{\gamma}}(s;r,\phi,r_{0},\phi_{0}) \right\rbrace(t).
\end{equation}

By noting that the only solution to the heat equation on $W_{\gamma}$ with Neumann boundary condition and initial datum $\mathds{1}_{W_{\gamma}}$ is $u = \mathds{1}_{W_{\gamma}}$,
it is immediate that the heat content of $\widetilde{S}_{v}(R)$ when $v$ is an NN vertex is $|\widetilde{S}_{v}(R)|$. However, the latter route is not as illuminating as to our method when the computations become non-trivial. So we recall the relevant computations below to motivate the following lemmas.

\begin{lemma}\label{lem:NN}%[NN Wedge]
For all $t>0$, \begin{equation}
    \int_{0}^{R} r \; dr \int_{0}^{\gamma} d\phi \int_{0}^{\infty} r_{0}\; dr_{0} \int_{0}^{\gamma}d\phi_{0} \; \eta_{W_{\gamma}}(t,r,\phi,r_{0},\phi_{0}) = \frac{1}{2}\gamma R^{2}.
\end{equation}
\end{lemma}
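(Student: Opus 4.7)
The plan is to deduce the identity from uniqueness of bounded solutions to the Neumann heat equation on $W_\gamma$. The constant function $u(t,r,\phi) \equiv 1$ is a smooth, bounded classical solution of $\partial_t u = \Delta u$ on $W_\gamma$, trivially satisfies the Neumann boundary condition (indeed $\partial_\phi u \equiv 0$), and since $\mathds{1}_{W_\gamma} \equiv 1$ on the whole domain $W_\gamma$, it also realises the initial datum. A standard Tychonov-type uniqueness argument for Neumann problems on unbounded planar domains with Gaussian-bounded heat kernel then yields
\begin{equation}
    \int_0^\infty r_0\,dr_0 \int_0^\gamma d\phi_0\;\eta_{W_\gamma}(t;r,\phi,r_0,\phi_0) = 1
\end{equation}
for every $t>0$ and $(r,\phi) \in W_\gamma$. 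Integrating this identity over the sector $\widetilde{S}_v(R)$, i.e.\ $0 < r < R$ and $0 < \phi < \gamma$, and applying Fubini (justified by positivity of $\eta_{W_\gamma}$), the left-hand side collapses to $\int_0^R r\,dr\int_0^\gamma d\phi = \tfrac{\gamma R^2}{2}$, as claimed.

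The main (and essentially only) technical obstacle is the uniqueness step, since $W_\gamma$ is unbounded. It can be handled by a maximum-principle argument using a Gaussian upper bound on $\eta_{W_\gamma}$, obtained for instance by reflecting across the Neumann boundary and comparing with $p_{\mathbb{R}^2}$, or by a truncation-based energy estimate on expanding sub-wedges combined with the subprobabilistic bound $\int_{W_\gamma} \eta_{W_\gamma}(t;x,y)\,dy \le 1$.

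As the text following the statement emphasises, this short route does not illuminate the approach needed at the NON and NOON vertices, where the heat content is no longer trivially the area. The alternative, template-setting approach I would present is to Laplace-transform in $t$, substitute the Kontorovich--Lebedev representation of $G_{W_\gamma}$, and exchange the $\theta$-integral with the four spatial integrals. The integrand then factorises into the angular double integral of $\Phi_\gamma$, the full radial integral $\int_0^\infty r_0 K_{i\theta}(r_0\sqrt s)\,dr_0$, and the truncated integral $\int_0^R r K_{i\theta}(r\sqrt s)\,dr$. For this NN case the angular integral simplifies beautifully: the $\theta^{-2}$ contributions from Terms $2$ and $3$ of $\Phi_\gamma$ exactly cancel the corresponding piece of Term $1$, leaving $\tfrac{2\gamma \sinh(\pi\theta)}{\theta}$, after which the remaining Bessel manipulations recover the constant $\tfrac{\gamma R^2}{2}$ upon inverse Laplace transform. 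Working through this algebraic cancellation here is the point of the exercise, as the same bookkeeping structure, with residual non-trivial terms producing the $b$ and $c$ coefficients, will reappear in the NON and NOON lemmas.
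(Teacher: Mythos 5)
Your proposal is correct, and it neatly mirrors the structure of the paper's own discussion. The uniqueness-based short route you lead with --- observing that the constant function $u\equiv 1$ is a bounded Neumann solution with initial datum $\mathds{1}_{W_\gamma}$ and invoking conservativeness of the reflected Brownian motion on the wedge --- is precisely the observation the paper makes in the sentence immediately preceding Lemma~\ref{lem:NN}; the authors explicitly note that from this observation the result is ``immediate'' but then decline to use it as the proof because it ``is not as illuminating as to our method when the computations become non-trivial.'' Your second, ``template-setting'' approach via the Kontorovich--Lebedev representation is essentially a verbatim sketch of what the paper actually writes out: compute $\int_0^\gamma\!\int_0^\gamma \Phi_\gamma\,d\phi\,d\phi_0 = \tfrac{2\gamma}{\theta}\sinh(\pi\theta)$, use the two Bessel identities (Gradshteyn--Ryzhik 6.561.16/8.332.3 and 6.794.2) to reduce the radial integrals, and invert the Laplace transform. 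The only caveat in your short route is that the conservativeness/uniqueness claim on an unbounded wedge is genuinely not free: your suggested remedies (Gaussian bound via reflection, or truncation plus the subprobability bound) are workable, and the paper itself essentially delegates this kind of uniqueness question to Appendix~B and to the probabilistic construction of \cite{GM72}; the radial component of RBM on $W_\gamma$ being an ordinary planar Bessel process makes the non-explosion argument clean. So you are not wrong, but you should be aware that the paper presents the computational route precisely so that the later NON and NOON lemmas can be read as the same calculation with non-vanishing residual terms, which is exactly the point you flag at the end.
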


\begin{proof}
Observe that we are computing the quantity \begin{equation}
\begin{split}
    & \int_{0}^{R} r \; dr \int_{0}^{\gamma} d\phi \int_{0}^{\infty} r_{0}\; dr_{0} \int_{0}^{\gamma}d\phi_{0} \; \mathcal{L}^{-1}\left\lbrace G_{W_{\gamma}}(s,r,\phi,r_{0},\phi_{0}) \right\rbrace(t) \\
    & \quad = \int_{0}^{R} r \; dr \int_{0}^{\gamma} d\phi \int_{0}^{\infty} r_{0}\; dr_{0} \int_{0}^{\gamma}d\phi_{0} \;  \mathcal{L}^{-1}\bigg\lbrace \frac{1}{\pi^{2}} \int_{0}^{\infty} d\theta \; K_{i\theta}(r\sqrt{s}) K_{i\theta}(r_{0}\sqrt{s}) \Phi_{\gamma}(\theta,\phi,\phi_{0}) \bigg\rbrace(t)
\end{split}
\end{equation} and that by Fubini's theorem we can rearrange the integrals. One can easily compute that \begin{equation}
    \int_{0}^{\gamma} d\phi \int_{0}^{\gamma} d\phi_{0} \; \Phi_{\gamma}(\theta,\phi,\phi_{0}) = \frac{2\gamma}{\theta}\sinh\left(\pi\theta\right)
\end{equation} and hence we now want to compute the quantity \begin{equation}
    \frac{2\gamma}{\pi^{2}}\mathcal{L}^{-1}\left\lbrace \int_{0}^{R} r\; dr \int_{0}^{\infty} r_{0} \; dr_{0}\int_{0}^{\infty}\frac{d\theta}{\theta} \; K_{i\theta}(r\sqrt{s}) K_{i\theta}(r_{0}\sqrt{s})\sinh(\pi\theta)\right\rbrace(t).
\end{equation} This quantity has been computed before in \cite[\S 2]{vdBS90} and is  $\frac{1}{2}\gamma R^{2}$ as desired. For the sake of completeness we give the calculation here. Combining formulae 6.561.16 and 8.332.3 in \cite{GR15}, we see that \begin{equation}\label{bessel_trick}
    \int_{0}^{\infty} dr \; rK_{i\theta}(r\sqrt{s}) = \frac{\pi\theta}{2s\sinh\left(\tfrac{\pi}{2}\theta\right)}
\end{equation} and formula 6.794.2 in \cite{GR15} reads \begin{equation}
    \int_{0}^{\infty} d\theta \; K_{i\theta}(r\sqrt{s}) \cosh\left(\tfrac{\pi}{2}\theta\right) = \frac{\pi}{2}.
\end{equation} Hence, applying these successively we see that \begin{equation}\label{area_term}
    \begin{split}
        & \frac{2\gamma}{\pi^{2}}\mathcal{L}^{-1}\left\lbrace \int_{0}^{R} r\; dr \int_{0}^{\infty} r_{0} \; dr_{0}\int_{0}^{\infty}\frac{d\theta}{\theta} \; K_{i\theta}(r\sqrt{s}) K_{i\theta}(r_{0}\sqrt{s})\sinh(\pi\theta)\right\rbrace(t) \\
        & \qquad = \frac{2\gamma}{\pi}\mathcal{L}^{-1}\left\lbrace \frac{1}{s} \int_{0}^{R} r\; dr \int_{0}^{\infty}d\theta \; K_{i\theta}(r\sqrt{s}) \cosh(\tfrac{\pi}{2}\theta)\right\rbrace(t) \\
         & \qquad = \gamma \mathcal{L}^{-1}\left\lbrace \frac{1}{s} \int_{0}^{R} r\; dr \right\rbrace(t) = \frac{1}{2}\gamma R^{2}.
    \end{split}
\end{equation}
\end{proof}

The case of NON and NOON vertices is much more technical and will require the following technical tools that we shall now prove.

\begin{lemma}\label{searchlight_coverings}
Let $\mathcal{T}:=\lbrace (\sigma,\rho,\lambda): 0<\lambda<\rho < \sigma <2\pi \rbrace$. Let $\xi:\mathcal{T}\to \mathbb{R}$ be a function in only $\rho$ and $\lambda$, denoted $\xi(\rho,\lambda)$, such that $0\leq \xi(\rho,\lambda)<2\sigma$. \begin{enumerate}[label=(\roman*)]
\item We have that the collection $\lbrace A_{N}, B_{N}\rbrace_{N\in\mathbb{Z}_{\geq 0}}$, where \begin{equation}
    A_{N}:= \lbrace (\sigma,\rho,\lambda)\in\mathcal{T} : |\tfrac{\pi}{2}-(2N+1)\sigma + \xi(\rho,\lambda)|<\sigma\rbrace
\end{equation} and \begin{equation}
    B_{N}:= \lbrace (\sigma,\rho,\lambda)\in\mathcal{T} : \tfrac{\pi}{2}+\xi(\rho,\lambda) = 2(N+1)\sigma \rbrace,
\end{equation} forms a covering of $\mathcal{T}$. Moreover, for $N\geq 1$ fixed, any $(\sigma,\rho,\lambda) \in A_{N}$ or $(\sigma,\rho,\lambda) \in B_{N}$, and any $1\leq n \leq N$, we have that $0 < \tfrac{\pi}{2} - 2n\sigma +\xi(\rho,\lambda) < \tfrac{\pi}{2}$.

\item If $\xi(\rho,\lambda)>0$ then we have that the collection $\lbrace C_{N}, D_{N}\rbrace_{N\in\mathbb{Z}_{\geq 0}}$, where \begin{equation}
    C_{N}:= \lbrace (\sigma,\rho,\lambda)\in\mathcal{T} : |\tfrac{\pi}{2}-(2N-1)\sigma - \xi(\rho,\lambda)|<\sigma\rbrace
\end{equation} and \begin{equation}
    D_{N}:= \lbrace (\sigma,\rho,\lambda)\in\mathcal{T} : \tfrac{\pi}{2}-\xi(\rho,\lambda) = 2N\sigma \rbrace,
\end{equation} forms a covering of $\mathcal{T}$. Moreover, for $N\geq 1$ fixed, any $(\sigma,\rho,\lambda) \in C_{N}$ or $(\sigma,\rho,\lambda) \in D_{N}$, and any $1\leq n \leq N$, we have that $0 < \tfrac{\pi}{2} - 2(n-1)\sigma -\xi(\rho,\lambda) < \tfrac{\pi}{2}$.
\end{enumerate}
\end{lemma}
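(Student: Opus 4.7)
The plan is to recognise both claims as elementary partitioning statements on the real line: the only content is to track the sign of $\tfrac{\pi}{2}\pm\xi$ and then to check the uniform bounds on the inner indices. Each defining inequality $|\tfrac{\pi}{2}-(2N+1)\sigma+\xi|<\sigma$ or $|\tfrac{\pi}{2}-(2N-1)\sigma-\xi|<\sigma$ simply says that a given real number lies in an open interval of length $2\sigma$ centred at an odd multiple of $\sigma$, and the $B_N$, $D_N$ sets pick up the remaining nodal values. So in both parts the cover is obtained directly from a partition of $\mathbb{R}$ into intervals of length $2\sigma$.

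For (i), I would fix $(\sigma,\rho,\lambda)\in\mathcal{T}$ and set $y:=\tfrac{\pi}{2}+\xi(\rho,\lambda)$. Since $\xi\geq 0$, $y$ is strictly positive, and the positive half-line is the disjoint union of the open intervals $(2N\sigma,2(N+1)\sigma)$, $N\geq 0$, together with the boundary points $\{2M\sigma\}_{M\geq 1}$. Rewriting $2N\sigma<y<2(N+1)\sigma$ as $|y-(2N+1)\sigma|<\sigma$ identifies the interior case with $A_N$ and the boundary case with $B_N$. For the moreover statement with $N\geq 1$, I would use $y>2N\sigma$ to obtain the lower bound $y-2n\sigma\geq y-2N\sigma>0$ for $1\leq n\leq N$, and the hypothesis $\xi<2\sigma\leq 2n\sigma$ to obtain the upper bound $y-2n\sigma=\tfrac{\pi}{2}-(2n\sigma-\xi)<\tfrac{\pi}{2}$.

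For (ii), I would instead set $z:=\tfrac{\pi}{2}-\xi(\rho,\lambda)$. Now $z$ may be negative, but the hypothesis $\xi<2\sigma$ gives $z>\tfrac{\pi}{2}-2\sigma>-2\sigma$, which confines $z$ to $(-2\sigma,\infty)$ and so forces the index to satisfy $N\geq 0$ in both the interior case ($z\in(2(N-1)\sigma,2N\sigma)$, giving $C_N$) and the boundary case ($z=2N\sigma$, giving $D_N$). The lower half of the moreover statement then follows exactly as in (i), using $z>2(N-1)\sigma$ to get $z-2(n-1)\sigma\geq z-2(N-1)\sigma>0$ for $1\leq n\leq N$. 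The only subtle point — the one place the hypothesis $\xi>0$ of part (ii) is used — is the upper bound $z-2(n-1)\sigma<\tfrac{\pi}{2}$ at $n=1$: this reduces to $\xi>0$, while for $n\geq 2$ it reduces to $\xi>-2(n-1)\sigma$ and is automatic from $\xi\geq 0$. No step of the proof is genuinely hard; the only obstacle worth flagging is that the sign of $z$ must be tracked carefully when locating it in the partition, and that the strict inequality $\xi>0$ is essential at exactly this one boundary case.
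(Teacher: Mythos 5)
Your proof is correct and follows the same route as the paper's: in both cases the defining inequality of $A_N$ (resp. $C_N$) is rewritten as a two-sided inequality locating $\tfrac{\pi}{2}+\xi$ (resp. $\tfrac{\pi}{2}-\xi$) in the open interval $(2N\sigma,2(N+1)\sigma)$ (resp. $(2(N-1)\sigma,2N\sigma)$), the nodal points give $B_N$ (resp. $D_N$), and the ``moreover'' bounds are monotone chains over $n$ anchored at $n=1$ and $n=N$ using $\xi<2\sigma$ for the upper bound and membership in $A_N$ or $B_N$ (resp. $C_N$ or $D_N$) for the lower bound. Your observation that the strict hypothesis $\xi>0$ is precisely what makes the $n=1$ upper bound in (ii) strict is the same point the paper's argument relies on; the only cosmetic difference is that you introduce auxiliary variables $y$, $z$ and treat the $A_N$/$B_N$ (resp. $C_N$/$D_N$) cases together, where the paper handles them one at a time.
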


\begin{proof}
For (i) we have that $(\sigma,\rho,\lambda)\in A_{N}$ if and only if $-\sigma < \tfrac{\pi}{2}-(2N+1)\sigma + \xi(\lambda,\sigma) < \sigma$ which occurs if and only if $2N\sigma < \tfrac{\pi}{2} + \xi(\rho,\lambda) < 2(N+1)\sigma$.
Now pick an arbitrary point $(\sigma,\rho,\lambda)\in\mathcal{T}$. Suppose that $\tfrac{\pi}{2} + \xi(\rho,\lambda) \neq 2(M+1)\sigma$ for any $M\in\mathbb{Z}_{\geq 0}$. Then clearly there exists a $K\in\mathbb{Z}_{\geq 0}$ such that $2K\sigma < \tfrac{\pi}{2}+\xi(\rho,\lambda)<2(K+1)\sigma$ and hence $(\sigma,\rho,\lambda) \in A_{K}$. If we have that $\tfrac{\pi}{2} + \xi(\rho,\lambda) = 2(K'+1)\sigma$ for some $K'\in \mathbb{Z}_{\geq 0}$, then $(\sigma,\rho,\lambda) \in B_{K'}$. Since our choice of point $(\sigma,\rho,\lambda)$ was arbitrary, we indeed have that $\lbrace A_{N}, B_{N}\rbrace_{N\in\mathbb{Z}_{\geq 0}}$ is a covering of $\mathcal{T}$. Now fix $N\geq 1$. Let us choose an arbitrary $(\sigma,\rho,\lambda)\in A_{N}$. For $1\leq n \leq N$, we have that \begin{equation}
    \tfrac{\pi}{2} - 2n \sigma +\xi(\rho,\lambda) \leq \tfrac{\pi}{2} - 2 \sigma +\xi(\rho,\lambda) < \tfrac{\pi}{2}
\end{equation} and that \begin{equation}
    \tfrac{\pi}{2} - 2n \sigma +\xi(\rho,\lambda) \geq \tfrac{\pi}{2} - 2N \sigma +\xi(\rho,\lambda) > 2N\sigma -2N\sigma = 0.
\end{equation} Now let us choose $(\sigma,\rho,\lambda)\in B_{N}$. For $1\leq n \leq N$, we have that \begin{equation}
    \tfrac{\pi}{2} - 2n \sigma +\xi(\rho,\lambda) \leq \tfrac{\pi}{2} - 2 \sigma +\xi(\rho,\lambda) < \tfrac{\pi}{2}
\end{equation} and that \begin{equation}
    \tfrac{\pi}{2} - 2n \sigma +\xi(\rho,\lambda) \geq \tfrac{\pi}{2} - 2N \sigma +\xi(\rho,\lambda) = 2(N+1)\sigma -2N\sigma = 2\sigma > 0.
\end{equation} So we have proven (i).

We prove (ii) in a similar fashion. We have that $(\sigma,\rho,\lambda) \in C_{N}$ if and only if $2(N-1)\sigma < \tfrac{\pi}{2} - \xi(\rho,\lambda) < 2N\sigma$. Now pick an arbitrary point $(\sigma,\rho,\lambda)\in\mathcal{T}$. Suppose that $\tfrac{\pi}{2} - \xi(\rho,\lambda) \neq 2M\sigma$ for any $M\in\mathbb{Z}_{\geq 0}$. Then clearly there exists a $K\in\mathbb{Z}_{\geq 0}$ such that $2(K-1)\sigma < \tfrac{\pi}{2}-\xi(\rho,\lambda)<2K\sigma$ and hence $(\sigma,\rho,\lambda) \in C_{K}$. If we have that $\tfrac{\pi}{2} -\xi(\rho,\lambda) = 2K'\sigma$ for some $K'\in \mathbb{Z}_{\geq 0}$, then $(\sigma,\rho,\lambda) \in D_{K'}$. Since our choice of point $(\sigma,\rho,\lambda)$ was arbitrary, we indeed have that $\lbrace C_{N}, D_{N}\rbrace_{N\in\mathbb{Z}_{\geq 0}}$ is a covering of $\mathcal{T}$. Now fix $N\geq 1$. Let us choose an arbitrary $(\sigma,\rho,\lambda)\in C_{N}$. For $1\leq n \leq N$, we have that \begin{equation}
    \tfrac{\pi}{2} - 2(n-1) \sigma -\xi(\rho,\lambda) \leq \tfrac{\pi}{2} -\xi(\rho,\lambda) < \tfrac{\pi}{2}
\end{equation} and that \begin{equation}
    \tfrac{\pi}{2} - 2(n-1) \sigma -\xi(\rho,\lambda) \geq \tfrac{\pi}{2} - 2(N-1) \sigma -\xi(\rho,\lambda) > 2(N-1)\sigma -2(N-1)\sigma = 0.
\end{equation} Now let us choose an $(\sigma,\rho,\lambda)\in D_{N}$. For $1\leq n \leq N$, we have that \begin{equation}
    \tfrac{\pi}{2} - 2(n-1) \sigma -\xi(\rho,\lambda) \leq \tfrac{\pi}{2} - \xi(\rho,\lambda) < \tfrac{\pi}{2}
\end{equation} and that \begin{equation}
    \tfrac{\pi}{2} - 2(n-1) \sigma -\xi(\rho,\lambda) \geq \tfrac{\pi}{2} - 2(N-1) \sigma -\xi(\rho,\lambda) = 2N\sigma -2(N-1)\sigma = 2\sigma > 0.
\end{equation} So we have proven (ii).
\end{proof}

Before moving forward, let us state some known results for small-time asymptotics for inverse Laplace transforms.

\begin{lemma}\label{inverse_Laplace_transforms}
\begin{enumerate}[label=(\roman*)]
\item\label{cosh_sinh_convergent_integral} For $a,b\in \R$ such that $|a|<|b|$, we have that \begin{equation}
    \mathcal{L}^{-1}\left\lbrace \frac{1}{s}\int_{R}^{\infty} rdr\int_{0}^{\infty}d\theta K_{i\theta}(r\sqrt{s})\frac{\cosh(a\theta)-1}{\theta\sinh(b\theta)}\right\rbrace (t) = O\left(te^{-R^{2}/4t}\right).
\end{equation}
    \item\label{sinh_remainder} For $-\tfrac{\pi}{2}<a<\tfrac{\pi}{2}$, we have that \begin{equation}
    \mathcal{L}^{-1}\left\lbrace \frac{1}{s} \int_{R}^{\infty} rdr\int_{0}^{\infty}d\theta K_{i\theta}(r\sqrt{s})\frac{\sinh(a\theta)}{\theta}\right\rbrace (t) = O\left(te^{-R^{2}\cos^{2}(a)/4t}\right).
\end{equation}
\item\label{tanh_remainder} For $a>0$, we have that \begin{equation}
    \mathcal{L}^{-1}\left\lbrace \frac{1}{s} \int_{R}^{\infty} rdr\int_{0}^{\infty}d\theta K_{i\theta}(r\sqrt{s})\frac{\tanh(a\theta)}{\theta}\right\rbrace (t) = O\left(te^{-R^{2}/4t}\right).
\end{equation}
\end{enumerate}
\end{lemma}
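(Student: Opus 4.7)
All three parts can be handled by a common two-step reduction. In step (a), I would perform the inner $\theta$-integral in closed form, converting it to an integral over a single parameter $w$ (or $v$) of $e^{-r\sqrt{s}\cdot h(w)}$ against a nonnegative weight, for some $h(w)>0$ on the relevant range. In step (b), I would carry out the $r$-integration from $R$ to $\infty$ to produce a factor $(1+h(w)R\sqrt{s})e^{-h(w)R\sqrt{s}}/(s h(w)^2)$, and then invert the Laplace transform via $\mathcal{L}^{-1}\{s^{-k}e^{-c\sqrt{s}}\}(t)=O(e^{-c^2/(4t)})$ (modulo polynomial prefactors in $t$). The resulting bound is $O(te^{-h_\ast^2 R^2/(4t)})$ with $h_\ast=\inf h$. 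The analytic input for step (a) is the Mehler--Fock representation $K_{i\theta}(z)=\int_0^\infty e^{-z\cosh u}\cos(\theta u)\,du$ together with the Fourier cosine transforms
\begin{gather}
    \int_0^\infty \cos(u\theta)\,\frac{\sinh(v\theta)}{\sinh(b\theta)}\,d\theta \,=\, \frac{\pi\sin(\pi v/b)}{2b\bigl[\cos(\pi v/b)+\cosh(\pi u/b)\bigr]}\quad(|v|<b),\\
    \int_0^\infty \cos(u\theta)\,\frac{\cosh(v\theta)}{\cosh(a\theta)}\,d\theta \,=\, \frac{\pi\cos(\pi v/(2a))\cosh(\pi u/(2a))}{2a\bigl[\cos^2(\pi v/(2a))+\sinh^2(\pi u/(2a))\bigr]}\quad(|v|<a),
\end{gather}
both of which are standard identities extractable from Mittag--Leffler expansions of $\tan$ and $\sec$ (or from Gradshteyn--Ryzhik).

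For part (ii), I would write $\sinh(a\theta)/\theta=\int_0^a\cosh(v\theta)\,dv$ and invoke the analytic continuation $u\mapsto iv$ of Fourier inversion, which yields $\int_0^\infty K_{i\theta}(z)\cosh(v\theta)\,d\theta=\tfrac{\pi}{2}e^{-z\cos v}$ for $|v|<\pi/2$. Interchanging orders of integration and computing $\int_R^\infty r e^{-r\sqrt{s}\cos v}\,dr$ in closed form, the exponential rate in the resulting integrand is governed by $\cos v\geq \cos a>0$ for $v\in[0,a]$, so Laplace's method in $v$ together with the inverse Laplace transform in $s$ concentrates at $v=a$, yielding the sharp bound $O(te^{-R^2\cos^2(a)/(4t)})$.

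For parts (i) and (iii), I would use the decompositions
\begin{equation}
    \frac{\cosh(a\theta)-1}{\theta\sinh(b\theta)} \,=\, \int_0^a \frac{\sinh(v\theta)}{\sinh(b\theta)}\,dv, \qquad \frac{\tanh(a\theta)}{\theta} \,=\, \int_0^a \frac{\cosh(v\theta)}{\cosh(a\theta)}\,dv,
\end{equation}
substitute the Mehler--Fock representation of $K_{i\theta}$ and apply the two Fourier cosine transforms above, then perform the $v$-integration in closed form via the substitutions $y=\cos(\pi v/b)$ (for (i)) and $y=\sin(\pi v/(2a))$ (for (iii)). Each reduces to an expression of the form $\int_0^\infty e^{-z\cosh w}\psi(w)\,dw$, where $\psi(w)=\tfrac12\log\bigl[(1+\cosh(\pi w/b))/(\cos(\pi a/b)+\cosh(\pi w/b))\bigr]$ for (i) and $\psi(w)=\log\coth(\pi w/(4a))$ for (iii); both are nonnegative, integrable, have at worst a logarithmic singularity at $w=0$, and decay exponentially as $w\to\infty$. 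Since $\min_{w\geq 0}\cosh w=1$, the same $r$-integration and Laplace inversion then yield $O(te^{-R^2/(4t)})$.

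\textbf{Main obstacle.} The bulk of the technical work is in justifying the successive Fubini exchanges, in particular for (i) where $\sinh(v\theta)/\sinh(b\theta)$ decays only like $e^{(v-b)\theta}$ at infinity, and in controlling the logarithmic singularity of $\psi$ at $w=0$ against the factor $e^{-R\sqrt{s}\cosh w}$ uniformly in $t$ when passing to the inverse Laplace transform. The hardest single analytic step is identifying the closed forms of the two Fourier cosine transforms, but both follow from Mittag--Leffler expansions; once they are in hand, the extraction of the small-$t$ asymptotics is a routine application of the Laplace inversion of $e^{-c\sqrt{s}}/s^k$ together with Laplace's method on the remaining $w$-integral.
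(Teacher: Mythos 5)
Your proposal is correct in substance, and it is a genuinely different presentation from the paper's proof, which is essentially a citation: the paper proves (ii) and (iii) by invoking van den Berg--Srisatkunarajah \cite[\S 2]{vdBS90}, and proves (i) by observing that $\theta\mapsto(\cosh(a\theta)-1)/(\theta\sinh(b\theta))$ is absolutely integrable when $|a|<|b|$ and then referring to the methods of \cite[\S 4]{vdBGG20}. The implicit argument behind (i) in the paper is cleaner than yours: from the integral representation $K_{i\theta}(z)=\int_0^\infty e^{-z\cosh u}\cos(\theta u)\,du$ one reads off $|K_{i\theta}(z)|\le K_0(z)$, so the whole $\theta$-integral is bounded by a constant times $K_0(r\sqrt s)$; using $\int_R^\infty r K_0(r\sqrt s)\,dr = RK_1(R\sqrt s)/\sqrt s$ and the exact inversion $\mathcal{L}^{-1}\{R K_1(R\sqrt s)/\sqrt s\}(t)=e^{-R^2/4t}$, the bound $O(te^{-R^2/4t})$ drops out in one line. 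Your version of (i) instead carries the $v$- and $w$-integrals through explicitly, which works but is more elaborate than needed once you have the $L^1$ observation. On the other hand, you correctly note (implicitly) that this $L^1$ shortcut cannot be used for (ii) and (iii) --- the weights $\sinh(a\theta)/\theta$ and $\tanh(a\theta)/\theta$ are not integrable in $\theta$ --- so a finer argument exploiting cancellation is unavoidable there, and your unified reduction via $\int_0^\infty K_{i\theta}(z)\cosh(v\theta)\,d\theta = \tfrac\pi2 e^{-z\cos v}$ (the $|v|<\pi/2$ continuation of GR 6.794.2, which the paper already uses at $v=\pi/2$) is a sensible way to do it. The Fourier-cosine transforms you quote and the resulting $\psi$'s for (i) and (iii) are correct, the log singularity of $\psi$ at $w=0$ is integrable against $\operatorname{sech}^k w$, and the Laplace inversion $\mathcal{L}^{-1}\{s^{-3/2}e^{-c\sqrt s}\}(t)=2\sqrt{t/\pi}\,e^{-c^2/4t}-c\,\mathrm{erfc}(c/2\sqrt t)=O(t^{3/2}c^{-2}e^{-c^2/4t})$ keeps the boundary term at the $O(te^{-R^2/4t})$ level, so the constant $R^2/4$ in the exponent is indeed attained (your approach buys a self-contained, unified derivation at the cost of more intermediate special-function identities, whereas the paper's route is shorter but relies on external references; the cited lemmas in \cite{vdBS90} use the same Kontorovich--Lebedev machinery, so the substance overlaps for (ii) and (iii)).

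One small point to flag: you should be explicit that the $\theta$- and $v$-interchanges for (ii) and (iii) are justified by the uniform decay $|K_{i\theta}(z)|\lesssim e^{-\pi\theta/2}$ (fixed $z>0$), which dominates $\cosh(v\theta)\le e^{a\theta}$ since $a<\pi/2$; this is exactly where the hypothesis $|a|<\pi/2$ enters, and it is not quite the same as the $L^1$ bound that drives part (i).
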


\begin{proof}
Proofs of (ii) and (iii) can be found in \cite[\S 2]{vdBS90}. For (i), note that if $|a|<|b|$ then the integral \begin{equation}
    \int_{0}^{\infty} d\theta \left| \frac{\cosh(a\theta)-1}{\theta \sinh(b\theta)} \right| < \infty,
\end{equation} and we immediately obtain the result following similar methods to those in \cite[\S 4]{vdBGG20}.
\end{proof}

Using the previous two lemmata we can now prove the following crucial lemma, which is done in the spirit of the proof that the remainder is exponentially small in \cite{vdBS90}.

\begin{lemma}\label{angle_remainder}
Let $(\sigma,\rho,\lambda)\in \mathcal{T}$ and $\xi$ as in Lemma \ref{searchlight_coverings}. \begin{enumerate}[label=(\roman*)]
    \item \label{posxi} Then \begin{equation}
    \mathcal{L}^{-1}\left\lbrace \frac{1}{s} \int_{R}^{\infty} rdr\int_{0}^{\infty}d\theta K_{i\theta}(r\sqrt{s})\frac{\cosh((\frac{\pi}{2}-\sigma+\xi(\rho,\lambda))\theta)-1}{\theta\sinh(\sigma\theta)}\right\rbrace (t) = O\left(te^{-R^{2}C_{1}^{\sigma,\rho,\lambda}/4t}\right),
    \end{equation} where $C_{1}^{\sigma,\rho,\lambda}>0$ is a constant depending only on $\sigma$, $\rho$ and $\lambda$.
    \item\label{nnxi} If $\xi(\rho,\lambda) > 0$, then \begin{equation}
    \mathcal{L}^{-1}\left\lbrace \frac{1}{s} \int_{R}^{\infty} rdr\int_{0}^{\infty}d\theta K_{i\theta}(r\sqrt{s})\frac{\cosh((\frac{\pi}{2}+\sigma-\xi(\rho,\lambda))\theta)-1}{\theta\sinh(\sigma\theta)}\right\rbrace (t) = O\left(te^{-R^{2}C_{2}^{\sigma,\rho,\lambda}/4t}\right),
    \end{equation} where $C_{2}^{\sigma,\rho,\lambda}>0$ is a constant depending only on $\sigma$, $\rho$ and $\lambda$.
\end{enumerate}
\end{lemma}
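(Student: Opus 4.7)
The plan is to decompose each integrand into pieces that fall directly under Lemma \ref{inverse_Laplace_transforms}, using the coverings of $\mathcal{T}$ from Lemma \ref{searchlight_coverings} to structure the decomposition. The engine is the telescoping identity
\[
\cosh\bigl((\mu+\sigma)\theta\bigr) - \cosh\bigl((\mu-\sigma)\theta\bigr) = 2\sinh(\mu\theta)\sinh(\sigma\theta),
\]
which, applied iteratively, shifts the argument of $\cosh$ in steps of $2\sigma$; each step produces a $\sinh(\sigma\theta)$ factor that cancels the denominator and leaves a $\sinh(\cdot\,\theta)/\theta$ term to which part \ref{sinh_remainder} of Lemma \ref{inverse_Laplace_transforms} is tailored.

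For part (i), set $\mu = \tfrac{\pi}{2}+\xi(\rho,\lambda)$ and fix $(\sigma,\rho,\lambda) \in \mathcal{T}$; by Lemma \ref{searchlight_coverings}(i) it lies in some $A_N$ or $B_N$. Iterating the telescoping identity $N$ times produces
\[
\frac{\cosh((\mu-\sigma)\theta)-1}{\theta\sinh(\sigma\theta)} = \frac{\cosh((\mu-(2N+1)\sigma)\theta)-1}{\theta\sinh(\sigma\theta)} + 2\sum_{n=1}^{N} \frac{\sinh((\mu-2n\sigma)\theta)}{\theta}.
\]
On $A_N$ the residual cosh-term has $|\mu-(2N+1)\sigma|<\sigma$, so part \ref{cosh_sinh_convergent_integral} of Lemma \ref{inverse_Laplace_transforms} applies. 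On $B_N$ the equality $\mu=2(N+1)\sigma$ forces $\mu-(2N+1)\sigma=\sigma$, and the identities $\cosh(\sigma\theta)-1=2\sinh^2(\sigma\theta/2)$ together with $\sinh(\sigma\theta)=2\sinh(\sigma\theta/2)\cosh(\sigma\theta/2)$ collapse the residual to $\tanh(\sigma\theta/2)/\theta$, handled by part \ref{tanh_remainder}. The second clause of Lemma \ref{searchlight_coverings}(i) guarantees $0<\mu-2n\sigma<\tfrac{\pi}{2}$ for $1\leq n\leq N$, so part \ref{sinh_remainder} handles each summand; summing the $N+1$ exponentially small contributions yields the claimed bound, with $C_1^{\sigma,\rho,\lambda}$ equal to the minimum of the decay rates $\cos^2(\mu-2n\sigma)$ over $1\leq n\leq N$ together with the rate $1$ from the residual.

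Part (ii) proceeds analogously, now with $\tilde\mu = \tfrac{\pi}{2}-\xi(\rho,\lambda)$ and the covering $\{C_N, D_N\}$ from Lemma \ref{searchlight_coverings}(ii). The same identity, telescoped in the opposite direction, gives
\[
\frac{\cosh((\tilde\mu+\sigma)\theta)-1}{\theta\sinh(\sigma\theta)} = \frac{\cosh((\tilde\mu-(2N-1)\sigma)\theta)-1}{\theta\sinh(\sigma\theta)} + 2\sum_{n=1}^{N} \frac{\sinh((\tilde\mu-2(n-1)\sigma)\theta)}{\theta},
\]
and Lemma \ref{searchlight_coverings}(ii) supplies exactly the range conditions needed: $|\tilde\mu-(2N-1)\sigma|<\sigma$ on $C_N$, $\tilde\mu-(2N-1)\sigma=\sigma$ on $D_N$, and $0<\tilde\mu-2(n-1)\sigma<\tfrac{\pi}{2}$ for $1\leq n\leq N$. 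The same three parts of Lemma \ref{inverse_Laplace_transforms} then close the argument and deliver $C_2^{\sigma,\rho,\lambda}$.

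The main obstacle is purely combinatorial: one must check that the shifted arguments always land in the correct windows $(-\sigma,\sigma)$ and $(0,\tfrac{\pi}{2})$ so that the residual and sinh-terms fall cleanly under Lemma \ref{inverse_Laplace_transforms}. This is precisely what Lemma \ref{searchlight_coverings} was designed to provide, and the boundary cases $B_N, D_N$ only require the separate $\tanh$ simplification noted above; once the combinatorics are set up, the estimates follow immediately from Lemma \ref{inverse_Laplace_transforms}.
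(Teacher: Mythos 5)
Your proposal is correct and follows essentially the same route as the paper: the same coverings $\{A_N,B_N\}$ and $\{C_N,D_N\}$ from Lemma \ref{searchlight_coverings}, the same telescoping via $\cosh((\mu+\sigma)\theta)-\cosh((\mu-\sigma)\theta)=2\sinh(\mu\theta)\sinh(\sigma\theta)$, the same reduction of the residual on $B_N$ (resp.\ $D_N$) to $\tanh(\sigma\theta/2)/\theta$, and the same invocation of parts (i), (ii), (iii) of Lemma \ref{inverse_Laplace_transforms}. The only differences from the paper's write-up are notational (your $\mu$, $\tilde\mu$) and that you absorb the $N=0$ base cases into the general telescoping formula rather than treating them separately, which is a harmless simplification.
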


\begin{proof}
Let us prove (i). Let $\lbrace A_{N},B_{N} \rbrace_{N\in \mathbb{Z}_{\geq 0}}$ be the covering of $\mathcal{T}$ as given in Lemma \ref{searchlight_coverings}(i). For $(\sigma,\rho,\lambda)\in A_{0}$, the result is immediate by Lemma \ref{inverse_Laplace_transforms}\ref{cosh_sinh_convergent_integral} and for $(\sigma,\rho,\lambda)\in B_{0}$ we see that \begin{equation}
    \frac{\cosh((\frac{\pi}{2}-\sigma+\xi(\rho,\lambda))\theta)-1}{\theta\sinh(\sigma\theta)} = \frac{\tanh(\tfrac{\sigma}{2}\theta)}{\theta}
\end{equation} and the result is immediate from Lemma \ref{inverse_Laplace_transforms}\ref{tanh_remainder}. Let $N\geq 1$. Suppose $(\sigma,\rho,\lambda) \in A_{N}$ and observe that \begin{equation}\begin{split}
    \frac{\cosh((\frac{\pi}{2}-\sigma+\xi(\rho,\lambda))\theta)-1}{\theta\sinh(\sigma\theta)} & = 2  \sum_{n=1}^{N}\frac{\sinh((\tfrac{\pi}{2}-2n\sigma+\xi(\rho,\lambda))\theta)}{\theta} \\ & \enspace \enspace +\frac{\cosh((\frac{\pi}{2}-(2N+1)\sigma + \xi(\rho,\lambda))\theta)-1}{\theta\sinh(\sigma\theta)},
    \end{split}
\end{equation} then the result comes from applying Lemmata \ref{inverse_Laplace_transforms}\ref{cosh_sinh_convergent_integral} and \ref{inverse_Laplace_transforms}\ref{sinh_remainder}. Suppose $(\sigma,\rho,\lambda) \in B_{N}$, then we can observe that \begin{equation}
    \frac{\cosh((\frac{\pi}{2}-\sigma+\xi(\rho,\lambda))\theta)-1}{\theta\sinh(\sigma\theta)} = 2  \sum_{n=1}^{N}\frac{\sinh((\tfrac{\pi}{2}-2n\sigma+\xi(\rho,\lambda))\theta)}{\theta} +\frac{\tanh(\tfrac{\sigma}{2}\theta)}{\theta},
\end{equation} and the result comes from applying Lemmata \ref{inverse_Laplace_transforms}\ref{sinh_remainder} and \ref{inverse_Laplace_transforms}\ref{tanh_remainder}. Since $\lbrace A_{N},B_{N} \rbrace_{N\in \mathbb{Z}_{\geq 0}}$ is a covering of $\mathcal{T}$ we are done.

Now let us prove (ii). Let $\lbrace C_{N},D_{N} \rbrace_{N\in \mathbb{Z}_{\geq 0}}$ be the covering of $\mathcal{T}$ as given in Lemma \ref{searchlight_coverings}(ii). For $(\sigma,\rho,\lambda)\in C_{0}$, the result is immediate by Lemma \ref{inverse_Laplace_transforms}\ref{cosh_sinh_convergent_integral} and for $(\sigma,\rho,\lambda)\in D_{0}$ we see that \begin{equation}
    \frac{\cosh((\frac{\pi}{2}+\sigma-\xi(\rho,\lambda))\theta)-1}{\theta\sinh(\sigma\theta)} = \frac{\tanh(\tfrac{\sigma}{2}\theta)}{\theta}
\end{equation} and the result is immediate from Lemma  \ref{inverse_Laplace_transforms}\ref{tanh_remainder}. Let $N\geq 1$. Suppose $(\sigma,\rho,\lambda) \in C_{N}$ and observe that \begin{equation}\begin{split}
    \frac{\cosh((\frac{\pi}{2}+\sigma-\xi(\rho,\lambda))\theta)-1}{\theta\sinh(\sigma\theta)} & = 2  \sum_{n=1}^{N}\frac{\sinh((\tfrac{\pi}{2}-2(n-1)\sigma-\xi(\rho,\lambda))\theta)}{\theta} \\ & \enspace \enspace +\frac{\cosh((\frac{\pi}{2}-(2N-1)\sigma - \xi(\rho,\lambda))\theta)-1}{\theta\sinh(\sigma\theta)},
    \end{split}
\end{equation} then the result comes from applying Lemmata \ref{inverse_Laplace_transforms}\ref{cosh_sinh_convergent_integral} and \ref{inverse_Laplace_transforms}\ref{sinh_remainder}. Suppose $(\sigma,\rho,\lambda) \in D_{N}$, then we can observe that \begin{equation}
    \frac{\cosh((\frac{\pi}{2}+\sigma-\xi(\rho,\lambda))\theta)-1}{\theta\sinh(\sigma\theta)} = 2  \sum_{n=1}^{N}\frac{\sinh((\tfrac{\pi}{2}-2(n-1)\sigma-\xi(\rho,\lambda))\theta)}{\theta} +\frac{\tanh(\tfrac{\sigma}{2}\theta)}{\theta},
\end{equation} and the result comes from applying Lemmata \ref{inverse_Laplace_transforms}\ref{sinh_remainder} and \ref{inverse_Laplace_transforms}\ref{tanh_remainder}. Since $\lbrace C_{N},D_{N} \rbrace_{N\in \mathbb{Z}_{\geq 0}}$ is a covering of $\mathcal{T}$ we are done.
\end{proof}

\begin{rem}
From the above it is clear that for (i) and (ii), if $\xi(\rho,\lambda)$ depends only on $\rho$, then the constants $C_{1}^{\sigma,\rho,\lambda}>0$ and $C_{2}^{\sigma,\rho,\lambda}>0$ depend only on $\sigma$ and $\rho$. Moreover, in case (i) if $\xi(\rho,\lambda) = 0$ then $C_{1}^{\sigma,\rho,\lambda}>0$ depends only on $\sigma$.
\end{rem}

With this toolbox now in hand we are ready to compute the model computations for the NON and NOON vertex cases. First let us state a result that was computed in \cite[\S 2]{vdBS90} (see equations (2.10) and (2.14) there) and will be used in what follows:
\begin{equation}\label{perimeter_term}
\begin{split}
    & \mathcal{L}^{-1}\left\lbrace \int_{0}^{R} rdr\int_{0}^{\infty} r_{0}dr_{0}\int_{0}^{\infty}d\theta K_{i\theta}(r\sqrt{s})K_{i\theta}(r_{0}\sqrt{s})\frac{2\sinh^{2}(\frac{\pi}{2}\theta)}{\pi^{2}\theta^{2}}\right\rbrace (t) \\
    &  \qquad \qquad = \mathcal{L}^{-1}\left\lbrace \frac{1}{s}\int_{0}^{R} rdr \int_{0}^{\infty}d\theta K_{i\theta}(r\sqrt{s})\frac{\sinh(\frac{\pi}{2}\theta)}{\pi\theta}\right\rbrace (t)\\
    &  \qquad \qquad = \frac{R}{\pi^{1/2}}t^{1/2}  - \frac{R}{\pi^{1/2}}t^{1/2}\int_{1}^{\infty}\frac{dv}{v^{2}}\int_{0}^{1}dy\frac{y}{(1-y^{2})^{1/2}} e^{-R^{2}y^{2}v^{2}/4t}.
\end{split}
\end{equation}

\begin{thm}%[NON wedge]
\label{NON_wedge} Let $v$ be a NON vertex with interior angle $\gamma$ and exterior angle $\beta$. Under the variable change $\gamma = \rho$, $\beta = \sigma-\rho$, we have that $0<\rho<\sigma < 2\pi$ and the model heat content contribution from $\widetilde{S}_{v}(R)$ is
\begin{equation}
\begin{split}
    & \int_{0}^{R} r \; dr \int_{0}^{\rho} d\phi \int_{0}^{\infty} r_{0} \; dr_{0} \int_{0}^{\rho} d\phi_{0} \; \eta_{W_{\sigma}}(t;r,\phi,r_{0},\phi_{0}) \\ & \qquad \qquad = \frac{1}{2}\rho R^{2} - \frac{R}{\pi^{1/2}}t^{1/2} + \hat{b}(\sigma,\rho)t  \\
    & \qquad \qquad \quad + \frac{R}{\pi^{1/2}}t^{1/2}\int_{1}^{\infty}\frac{dv}{v^{2}}\int_{0}^{1}dy\frac{y}{(1-y^{2})^{1/2}} e^{-R^{2}y^{2}v^{2}/4t} + O\left(te^{-R^{2}C_{\sigma,\rho}/4t}\right) \\
    & \qquad \qquad = |\widetilde{S}_{v}(R)| - \frac{R}{\pi^{1/2}}t^{1/2} + b(\gamma,\beta)t  \\
    & \qquad \qquad \quad + \frac{R}{\pi^{1/2}}t^{1/2}\int_{1}^{\infty}\frac{dv}{v^{2}}\int_{0}^{1}dy\frac{y}{(1-y^{2})^{1/2}} e^{-R^{2}y^{2}v^{2}/4t} + O\left(te^{-R^{2}C_{\sigma,\rho}/4t}\right),
\end{split}
\end{equation} where \begin{equation}
    \hat{b}(\sigma,\rho) = \int_{0}^{\infty}d\theta \frac{\cosh\left(\frac{\pi}{2}\theta\right)\cosh((\sigma-2\rho)\theta)-\cosh\left(\left(\frac{\pi}{2}-\sigma\right)\theta\right)}{2\sinh(\sigma\theta)\sinh(\frac{\pi}{2}\theta)}
\end{equation} and $C_{\sigma,\rho},C_{\gamma,\beta}>0$ are constants depending only on $\sigma$ and $\rho$ respectively.
\end{thm}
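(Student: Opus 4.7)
The strategy follows the same template as Lemma \ref{lem:NN}, but with the non-trivial complication that the angular integration is taken over a sub-wedge of angle $\rho$, smaller than the ambient wedge $W_\sigma$ of opening $\sigma$. The plan is to insert the Kontorovich--Lebedev representation of $G_{W_\sigma}$, use Fubini to interchange the $(r,\phi,r_0,\phi_0)$-integrals with the $\theta$-integral and the inverse Laplace transform, and then process the angular and radial pieces separately.

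First I would evaluate the double angular integral $\int_0^\rho\!\int_0^\rho \Phi_\sigma(\theta,\phi,\phi_0)\,d\phi\,d\phi_0$. The first summand of $\Phi_\sigma$ is handled via $\int_0^\rho\!\int_0^\rho f(|\phi-\phi_0|)\,d\phi\,d\phi_0 = 2\int_0^\rho (\rho-u)f(u)\,du$, while the remaining two yield explicit combinations of hyperbolic functions. A convenient use of $\cosh(x+y)+\cosh(x-y)=2\cosh x\cosh y$ isolates a leading contribution $\tfrac{2\rho}{\theta}\sinh(\pi\theta)$ (the analogue of the computation that gives the area term in Lemma \ref{lem:NN}) plus a remainder $A(\sigma,\rho,\theta)$ consisting of terms featuring $\cosh((\sigma-2\rho)\theta)$, $\cosh((\tfrac{\pi}{2}-\sigma)\theta)$, and $\sinh(\tfrac{\pi}{2}\theta)$. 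Applying the Bessel identity \eqref{bessel_trick} in the $r_0$ variable reduces the whole expression to a single Kontorovich--Lebedev type integral against $K_{i\theta}(r\sqrt{s})$.

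Next I would split $\int_0^R r\,dr = \int_0^\infty r\,dr - \int_R^\infty r\,dr$. On the $[0,\infty)$ piece, reapplying \eqref{bessel_trick} and the identity $\int_0^\infty K_{i\theta}(r\sqrt{s})\cosh(\tfrac{\pi}{2}\theta)\,d\theta = \pi/2$ (both of which already appeared in the proof of Lemma \ref{lem:NN}) collapses the leading $\tfrac{2\rho}{\theta}\sinh(\pi\theta)$ term to the area $\tfrac{1}{2}\rho R^2$, while the piece of $A(\sigma,\rho,\theta)$ proportional to $\sinh(\tfrac{\pi}{2}\theta)/\theta$ contributes exactly the perimeter term $-\tfrac{R}{\pi^{1/2}}t^{1/2}$ plus its cusp-cancelling correction via \eqref{perimeter_term}. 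The surviving pieces on $[0,\infty)$ that are absolutely convergent in $\theta$, namely those with numerators of the form $\cosh(\tfrac{\pi}{2}\theta)\cosh((\sigma-2\rho)\theta)-\cosh((\tfrac{\pi}{2}-\sigma)\theta)$ divided by $2\sinh(\sigma\theta)\sinh(\tfrac{\pi}{2}\theta)$, produce precisely the coefficient $\hat{b}(\sigma,\rho)\,t$ after taking the inverse Laplace transform of $1/s^2$. Under the substitution $\gamma=\rho$, $\beta=\sigma-\rho$ and using that $\cosh$ is even, this matches $b(\gamma,\beta)$.

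For the complementary $[R,\infty)$ piece I would invoke Lemma \ref{angle_remainder}, together with the decomposition of residual terms into fragments of the form $\frac{\cosh((\frac{\pi}{2}\pm\sigma\mp\xi(\rho))\theta)-1}{\theta\sinh(\sigma\theta)}$ and $\frac{\tanh(\tfrac{\sigma}{2}\theta)}{\theta}$, obtaining the $O(te^{-R^{2}C_{\sigma,\rho}/4t})$ remainder. The main obstacle is the bookkeeping in the step that decomposes the integrand: one has to split $A(\sigma,\rho,\theta)$ into exactly three pieces, namely the $\hat b$-piece (absolutely integrable at $\infty$ because of the $\sinh(\sigma\theta)$ denominator), the perimeter piece amenable to \eqref{perimeter_term}, and residual fragments whose tails fall in the scope of Lemma \ref{angle_remainder}. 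The subtle point is that, once $\sigma,\rho$ vary over $\mathcal T$, the combination $\tfrac{\pi}{2}-\sigma+\xi(\rho)$ is not uniformly confined to $(-\sigma,\sigma)$, which is precisely why the repeated-reflection scheme behind Lemma \ref{searchlight_coverings} and Lemma \ref{angle_remainder} is required to control every parameter regime uniformly.
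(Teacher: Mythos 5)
Your proposal follows the paper's proof essentially verbatim: compute $\int_0^\rho\!\int_0^\rho\Phi_\sigma\,d\phi\,d\phi_0$ to isolate the area piece $\tfrac{2\rho}{\theta}\sinh(\pi\theta)$, the perimeter piece $-\tfrac{2}{\theta^2}\sinh^2(\tfrac{\pi}{2}\theta)$ and the coefficient piece, evaluate the first two via \eqref{area_term} and \eqref{perimeter_term}, extract $\hat b(\sigma,\rho)t$ from the $[0,\infty)$ radial integral of the third piece using \eqref{bessel_trick} twice, and bound the $[R,\infty)$ tail via Lemma \ref{angle_remainder} after decomposing into $\frac{\cosh((\frac{\pi}{2}\pm\sigma\mp\xi)\theta)-1}{\theta\sinh(\sigma\theta)}$-type fragments. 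One prose slip worth fixing: the split $\int_0^R=\int_0^\infty-\int_R^\infty$ is applied \emph{only} to the $\hat b$-piece — the $[0,\infty)$ radial integrals of the area and perimeter pieces actually diverge, so \eqref{area_term} and \eqref{perimeter_term} must be invoked with $\int_0^R r\,dr$ kept intact rather than on the "$[0,\infty)$ piece" as your text briefly suggests.
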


\begin{proof}
As in the case of the NN wedge, we compute the angular terms first. Again we can do this by Fubini's theorem. With standard hyperbolic trigonometric identities, one can show that \begin{equation}
\begin{split}
    & \int_{0}^{\rho}d\phi \int_{0}^{\rho} d\phi_{0} \; \Phi_{\sigma}(\theta,\phi,\phi_{0}) \\
    & \qquad = \frac{2\rho}{\theta}\sinh(\pi\theta) + \frac{2}{\theta^{2}}(\cosh((\pi-\rho)\theta) - \cosh(\pi \theta)) +\frac{2\sinh((\pi-\sigma)\theta)}{\theta^{2}\sinh(\sigma\theta)} \left(\cosh(\rho\theta)-1\right) \\
    & \qquad \qquad + \frac{\sinh(\pi\theta)}{\theta^{2}\sinh(\sigma\theta)} \left(\cosh((\sigma-2\rho)\theta)+\cosh(\sigma\theta)-2\cosh((\sigma-\rho)\theta)\right) \\
    & \qquad = \frac{2\rho}{\theta}\sinh(\pi\theta) - \frac{2}{\theta^{2}}\sinh^{2}(\tfrac{\pi}{2}\theta) + \frac{2\sinh(\frac{\pi}{2}\theta)}{\theta^{2}\sinh(\sigma\theta)} \Big\lbrace \cosh\left(\tfrac{\pi}{2}\theta\right)\cosh((\sigma-2\rho)\theta) \\ & \qquad \qquad -\cosh\left(\left(\tfrac{\pi}{2}-\sigma\right)\theta\right) \Big\rbrace.
\end{split}
\end{equation} We know how to treat the first two terms in the last line of the equation above from equations \eqref{area_term} and \eqref{perimeter_term}, so we only need to treat the third term. Applying the identity in equation \eqref{bessel_trick} twice, we see that \begin{equation}
\begin{split}
& \mathcal{L}^{-1}\bigg\lbrace 2\int_{0}^{\infty} r dr \int_{0}^{\infty}r_{0}dr_{0}\int_{0}^{\infty}d\theta K_{i\theta}(r\sqrt{s})K_{i\theta}(r_{0}\sqrt{s}) \sinh(\tfrac{\pi}{2}\theta) \\ & \qquad \times \frac{\cosh\left(\frac{\pi}{2}\theta\right)\cosh((\sigma-2\rho)\theta)-\cosh\left(\left(\frac{\pi}{2}-\sigma\right)\theta\right)}{\pi^{2}\theta^{2}\sinh(\sigma\theta)} \bigg\rbrace(t)  \\
& \qquad \qquad = \mathcal{L}^{-1}\left\lbrace\frac{1}{2s^2}\int_{0}^{\infty}d\theta \frac{\cosh\left(\frac{\pi}{2}\theta\right)\cosh((\sigma-2\rho)\theta)-\cosh\left(\left(\frac{\pi}{2}-\sigma\right)\theta\right)}{\sinh(\sigma\theta)\sinh(\tfrac{\pi}{2}\theta)} \right\rbrace \\
& \qquad\qquad = \frac{t}{2}\int_{0}^{\infty}d\theta \frac{\cosh\left(\frac{\pi}{2}\theta\right)\cosh((\sigma-2\rho)\theta)-\cosh\left(\left(\frac{\pi}{2}-\sigma\right)\theta\right)}{\sinh(\sigma\theta)\sinh(\tfrac{\pi}{2}\theta)}.
\end{split}
\end{equation} Thus, by linearity of the inverse Laplace transform, we have that  \begin{equation}
\begin{split}
    & \int_{0}^{R} r \; dr \int_{0}^{\rho} d\phi \int_{0}^{\infty} r_{0} \; dr_{0} \int_{0}^{\rho} d\phi_{0} \; \eta_{W_{\sigma}}(t,r,\phi,r_{0},\phi_{0}) \\
    & \qquad =  \frac{1}{2}\rho R^{2} - \frac{R}{\pi^{1/2}}t^{1/2} + \hat{b}(\sigma,\rho)t -S_{1}(t) + \frac{R}{\pi^{1/2}}t^{1/2}\int_{1}^{\infty}\frac{dv}{v^{2}}\int_{0}^{1}dy\frac{y}{(1-y^{2})^{1/2}} e^{-R^{2}y^{2}v^{2}/4t},
\end{split}
\end{equation} where \begin{equation}
S_{1}(t) = \mathcal{L}^{-1}\left\lbrace \frac{1}{s} \int_{R}^{\infty} r  dr\int_{0}^{\infty}d\theta K_{i\theta}(r\sqrt{s})\frac{\cosh\left(\frac{\pi}{2}\theta\right)\cosh((\sigma-2\rho)\theta)-\cosh\left(\left(\frac{\pi}{2}-\sigma\right)\theta\right)}{\pi\theta\sinh(\sigma\theta)} \right\rbrace(t).
\end{equation} Thus, it suffices to show that $S_{1}(t)$ is exponentially small as $t\downarrow 0$. Observe that \begin{equation}
\begin{split}
& \frac{\cosh\left(\frac{\pi}{2}\theta\right)\cosh((\sigma-2\rho)\theta)-\cosh\left(\left(\frac{\pi}{2}-\sigma\right)\theta\right)}{\pi\theta\sinh(\sigma\theta)} \\
& \qquad \qquad = \frac{\cosh((\tfrac{\pi}{2}-\sigma+2\rho)\theta)+\cosh((\tfrac{\pi}{2}+\sigma-2\rho)\theta)-2\cosh((\tfrac{\pi}{2}-\sigma)\theta)}{2\pi\theta\sinh(\sigma\theta)}  \\
& \qquad \qquad = \frac{\cosh((\tfrac{\pi}{2}-\sigma+2\rho)\theta)-1}{2\pi\theta\sinh(\sigma\theta)}  + \frac{\cosh((\tfrac{\pi}{2}+\sigma-2\rho)\theta)-1}{2\pi\theta\sinh(\sigma\theta)}  \\ & \qquad \qquad \qquad - \frac{\cosh((\tfrac{\pi}{2}-\sigma)\theta)-1}{\pi\theta\sinh(\sigma\theta)}.
\end{split}
\end{equation} Hence, we have that \begin{equation}
    \begin{split}
        S_{1}(t) & = \mathcal{L}^{-1}\left\lbrace \frac{1}{s} \int_{R}^{\infty} r dr\int_{0}^{\infty}d\theta K_{i\theta}(r\sqrt{s})\frac{\cosh((\tfrac{\pi}{2}-\sigma+2\rho)\theta)-1}{2\pi\theta\sinh(\sigma\theta)} \right\rbrace(t) \\
        & \qquad + \mathcal{L}^{-1}\left\lbrace \frac{1}{s} \int_{R}^{\infty} r dr\int_{0}^{\infty}d\theta K_{i\theta}(r\sqrt{s})\frac{\cosh((\tfrac{\pi}{2}+\sigma-2\rho)\theta)-1}{2\pi\theta\sinh(\sigma\theta)} \right\rbrace(t) \\
        & \qquad -\mathcal{L}^{-1}\left\lbrace \frac{1}{s} \int_{R}^{\infty} r dr\int_{0}^{\infty}d\theta K_{i\theta}(r\sqrt{s})\frac{\cosh((\tfrac{\pi}{2}-\sigma)\theta)-1}{\pi\theta\sinh(\sigma\theta)} \right\rbrace(t).
    \end{split}
\end{equation} Using Lemmata \ref{angle_remainder}(i) and \ref{angle_remainder}(ii), we see immediately that $S_{1}(t) = O\left(te^{-R^{2}C_{\sigma,\rho}/t}\right)$ where $C_{\sigma,\rho}>0$ is a constant depending only on $\sigma$ and $\rho$. Undoing the variable substitution one sees that, $\hat{b}(\sigma,\rho)=b(\gamma,\beta)$ with $b$ as defined in  Theorem \ref{thm:main} and $\frac{1}{2}\rho R^{2} = \frac{1}{2}\gamma R^{2} = |\widetilde{S}_{v}(R)|$, which concludes the proof.
\end{proof}

\begin{thm}%[NOON wedge]
\label{NOON_wedge} Let $v$ be a NOON vertex with middle angle $\gamma$ and exterior angles $\beta$ and $\alpha$. Under the variable change $\gamma = \rho - \lambda$, $\beta = \sigma-\rho$ and $\alpha = \lambda$, we have that $0<\lambda<\rho<\sigma < 2\pi$. The model heat content contribution from $\widetilde{S}_{v}(R)$ when $\gamma$ is an interior angle of $\widetilde{D}$ is
\begin{equation}
\begin{split}
    &\int_{0}^{R} r \; dr\int_{\lambda}^{\rho}d\phi \int_{0}^{\infty}r_{0} \; dr_{0}\int_{\lambda}^{\rho} d\phi_{0} \; \eta_{W_{\sigma}}(t;r,\phi,r_{0},\phi_{0}) \\
    & \qquad \qquad = \frac{1}{2}(\rho-\lambda)R^{2} - \frac{2R}{\pi^{1/2}}t^{1/2}+\hat{c}(\sigma,\rho,\lambda)t  \\
    & \qquad \qquad \quad + \frac{2R}{\pi^{1/2}}t^{1/2}\int_{1}^{\infty}\frac{dv}{v^{2}}\int_{0}^{1}dy\frac{y}{(1-y^{2})^{1/2}} e^{-R^{2}y^{2}v^{2}/4t} + O\left(te^{-R^{2}C_{\sigma,\rho,\lambda}/4t}\right) \\
    & \qquad \qquad = |\widetilde{S}_{v}(R)| - \frac{2R}{\pi^{1/2}}t^{1/2} + c(\gamma,\beta,\alpha)t \\
    & \qquad \qquad \quad + \frac{2R}{\pi^{1/2}}t^{1/2}\int_{1}^{\infty}\frac{dv}{v^{2}}\int_{0}^{1}dy\frac{y}{(1-y^{2})^{1/2}} e^{-R^{2}y^{2}v^{2}/4t} + O\left(te^{-R^{2}C_{\gamma,\beta,\alpha}/4t}\right),
\end{split}
\end{equation} and when $\beta$ and $\alpha$ are interior angles of $\widetilde{D}$ is
\begin{equation}
\begin{split}
    &\int_{0}^{R} r \; dr\int_{(0,\lambda)\cup(\rho,\sigma)}d\phi \int_{0}^{\infty}r_{0} \; dr_{0}\int_{(0,\lambda)\cup(\rho,\sigma)} d\phi_{0} \; \eta_{W_{\sigma}}(t;r,\phi,r_{0},\phi_{0}) \\
    & \qquad \qquad = \frac{1}{2}(\lambda+\sigma-\rho)R^{2} - \frac{2R}{\pi^{1/2}}t^{1/2}+\hat{c}(\sigma,\rho,\lambda)t  \\
    & \qquad \qquad \quad + \frac{2R}{\pi^{1/2}}t^{1/2}\int_{1}^{\infty}\frac{dv}{v^{2}}\int_{0}^{1}dy\frac{y}{(1-y^{2})^{1/2}} e^{-R^{2}y^{2}v^{2}/4t} + O\left(te^{-R^{2}C_{\sigma,\rho,\lambda}/4t}\right) \\
    & \qquad \qquad = |\widetilde{S}_{v}(R)| - \frac{2R}{\pi^{1/2}}t^{1/2} + c(\gamma,\beta,\alpha)t \\
    & \qquad \qquad \quad + \frac{2R}{\pi^{1/2}}t^{1/2}\int_{1}^{\infty}\frac{dv}{v^{2}}\int_{0}^{1}dy\frac{y}{(1-y^{2})^{1/2}} e^{-R^{2}y^{2}v^{2}/4t} + O\left(te^{-R^{2}C_{\gamma,\beta,\alpha}/4t}\right).
\end{split}
\end{equation}

Here, \begin{equation}
\begin{split}
    \hat{c}(\sigma,\rho,\lambda) & = \hat{b}(\sigma,\rho) + \hat{b}(\sigma,\lambda)  \\ & \qquad - \int_{0}^{\infty}d\theta \frac{\cosh\left(\frac{\pi}{2}\theta\right)\left(\cosh((\sigma-\rho-\lambda)\theta)-\cosh((\sigma-\rho+\lambda)\theta)\right)}{\sinh(\sigma\theta)\sinh(\frac{\pi}{2}\theta)}
\end{split}
\end{equation} and $C_{\sigma,\rho,\lambda},C_{\gamma,\beta,\alpha}>0$ are constants depending only on $\sigma,\rho,\lambda>0$ and $\gamma,\beta,\alpha>0$ respectively.
\end{thm}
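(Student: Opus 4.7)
The plan is to mimic the proof of Theorem \ref{NON_wedge}, now with the initial datum supported on a sector bounded by \emph{two} open edges (at $\phi = \lambda$ and $\phi = \rho$) rather than one. I apply the Kontorovich--Lebedev representation of $\eta_{W_\sigma}$, swap orders of integration via Fubini, and reduce the problem to evaluating the angular double integral of $\Phi_\sigma$ over the relevant region, followed by the radial/Bessel manipulations \eqref{bessel_trick}--\eqref{perimeter_term}. The new integral term in $\hat{c}(\sigma, \rho, \lambda) - \hat{b}(\sigma, \rho) - \hat{b}(\sigma, \lambda)$ of the $t$ coefficient arises from the interaction between the two open edges, while the doubled $t^{1/2}$ perimeter and the doubled cusp contributions reflect that each open edge generates its own `not feeling the boundary' correction.

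For Case 1, I compute $\int_\lambda^\rho d\phi \int_\lambda^\rho d\phi_0 \; \Phi_\sigma$ directly: each of the three summands of $\Phi_\sigma$ has an elementary antiderivative in $\phi, \phi_0$, and after product-to-sum simplification the result decomposes into (a) an area piece $\tfrac{2(\rho - \lambda)\sinh(\pi\theta)}{\theta}$, which gives $|\widetilde{S}_v(R)| = \tfrac{1}{2}(\rho - \lambda) R^2$ via \eqref{area_term}; (b) a perimeter piece $-\tfrac{4}{\theta^2}\sinh^2(\tfrac{\pi}{2}\theta)$, which gives $-\tfrac{2R}{\pi^{1/2}}t^{1/2}$ plus the two open-cusp integrals via \eqref{perimeter_term}; and (c) an angular piece of the form $\tfrac{2\sinh(\tfrac{\pi}{2}\theta)}{\theta^2 \sinh(\sigma\theta)} M(\theta; \sigma, \rho, \lambda)$, where $M$ is a linear combination of $\cosh(\tfrac{\pi}{2}\theta)\cosh(\xi\theta)$ with $\xi \in \{\sigma - 2\rho,\, \sigma - 2\lambda,\, \sigma - \rho - \lambda,\, \sigma - \rho + \lambda\}$ together with one $\cosh((\tfrac{\pi}{2} - \sigma)\theta)$ term. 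Integrated over $r \in (0, R)$ and Laplace-inverted via \eqref{bessel_trick} together with $\int_0^\infty d\theta \; K_{i\theta}(r\sqrt{s}) \cosh(\tfrac{\pi}{2}\theta) = \tfrac{\pi}{2}$, the pieces corresponding to $\xi \in \{\sigma - 2\rho, \sigma - 2\lambda\}$ combined with the $\cosh((\tfrac{\pi}{2} - \sigma)\theta)$ term yield $(\hat{b}(\sigma, \rho) + \hat{b}(\sigma, \lambda))\, t$ exactly as in Theorem \ref{NON_wedge}, while the pieces corresponding to $\xi \in \{\sigma - \rho - \lambda, \sigma - \rho + \lambda\}$ supply the extra integral in $\hat{c}(\sigma, \rho, \lambda)$, yielding $\hat{c}(\sigma, \rho, \lambda)\, t$ overall.

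For Case 2, I exploit $\int_{(0, \lambda) \cup (\rho, \sigma)} d\phi = \int_0^\sigma d\phi - \int_\lambda^\rho d\phi$ (and similarly for $\phi_0$) to write
\begin{equation*}
\int_B \int_B \Phi_\sigma = \int_0^\sigma \int_0^\sigma \Phi_\sigma - 2 \int_0^\sigma \int_\lambda^\rho \Phi_\sigma + \int_\lambda^\rho \int_\lambda^\rho \Phi_\sigma,
\end{equation*}
where $B = (0, \lambda) \cup (\rho, \sigma)$. The first term is Lemma \ref{lem:NN} (contributing only $\tfrac{1}{2}\sigma R^2$), the third is Case 1, and the crucial step for the mixed term is the identity $\int_0^\sigma d\phi \, \Phi_\sigma(\theta, \phi, \phi_0) = \tfrac{2\sinh(\pi\theta)}{\theta}$ independently of $\phi_0$. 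This follows by computing each summand of $\Phi_\sigma$ using $\sinh A + \sinh B = 2\sinh(\tfrac{A+B}{2})\cosh(\tfrac{A-B}{2})$ and the identity $\sinh(\pi\theta) + \sinh((\pi - \sigma)\theta) = 2\sinh((\pi - \tfrac{\sigma}{2})\theta)\cosh(\tfrac{\sigma}{2}\theta)$, which makes the $\phi$-dependence cancel. Hence the mixed term contributes only $\tfrac{1}{2}(\rho - \lambda)R^2$ to the area and nothing at orders $t^{1/2}$ or $t$, so the total area is $\tfrac{1}{2}\sigma R^2 - (\rho - \lambda)R^2 + \tfrac{1}{2}(\rho - \lambda)R^2 = \tfrac{1}{2}(\sigma - \rho + \lambda)R^2 = |\widetilde{S}_v(R)|$, while the $t^{1/2}$, $t$, and cusp terms are inherited from Case 1.

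The main obstacle is controlling the $r \in (R, \infty)$ tail of the angular piece (c). I decompose each $\cosh(\tfrac{\pi}{2}\theta)\cosh(\xi\theta)$ using $2\cosh A \cosh B = \cosh(A + B) + \cosh(A - B)$, then subtract appropriate constants to bring the resulting numerators into the form $\cosh((\tfrac{\pi}{2} - \sigma + \xi')\theta) - 1$ or $\cosh((\tfrac{\pi}{2} + \sigma - \xi')\theta) - 1$ with $\xi' \in \{0, \rho - \lambda, \rho + \lambda, 2\rho, 2\lambda\}$, divided by $\theta \sinh(\sigma\theta)$. Each of these falls exactly in the scope of Lemma \ref{angle_remainder}(i) or Lemma \ref{angle_remainder}(ii), yielding an $O(t e^{-R^2 C / (4t)})$ remainder with $C > 0$ depending only on $(\sigma, \rho, \lambda)$; the finitely many such constants combine to a single $C_{\sigma, \rho, \lambda} > 0$. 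Finally, undoing the substitution $\gamma = \rho - \lambda$, $\beta = \sigma - \rho$, $\alpha = \lambda$ identifies $\hat{c}(\sigma, \rho, \lambda)$ with $c(\gamma, \beta, \alpha)$ (using the symmetry of $b$ to match $b(\sigma - \lambda, \lambda) = \hat{b}(\sigma, \lambda)$ and $b(\rho, \sigma - \rho) = \hat{b}(\sigma, \rho)$) and converts $|\widetilde{S}_v(R)|$ into $\tfrac{1}{2}\gamma R^2$ in Case 1 and $\tfrac{1}{2}(\beta + \alpha) R^2$ in Case 2, completing the proof.
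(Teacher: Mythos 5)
Your proposal is correct and follows essentially the same strategy as the paper: Kontorovich--Lebedev representation, Fubini, the angular integral of $\Phi_\sigma$, then \eqref{area_term}, \eqref{perimeter_term}, \eqref{bessel_trick}, and Lemma~\ref{angle_remainder} for the tail. The one organizational difference is in Case~1: the paper decomposes $\int_\lambda^\rho\int_\lambda^\rho \Phi_\sigma = \int_0^\rho\int_0^\rho + \int_0^\lambda\int_0^\lambda - 2\int_0^\lambda\int_0^\rho$, so that the first two pieces are read off from Theorem~\ref{NON_wedge} and only the cross term $\int_0^\lambda\int_0^\rho$ and its tail need fresh treatment, whereas you propose to evaluate $\int_\lambda^\rho\int_\lambda^\rho\Phi_\sigma$ from scratch and then parse the result into area/perimeter/angular pieces — a slightly more laborious but equivalent route. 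Your Case~2 observation that $\int_0^\sigma d\phi\,\Phi_\sigma(\theta,\phi,\phi_0) = \tfrac{2\sinh(\pi\theta)}{\theta}$ independently of $\phi_0$ is the same fact the paper extracts from \eqref{eqn:NOON_Sum}, just proved directly, and your area bookkeeping $\tfrac12\sigma - (\rho-\lambda) + \tfrac12(\rho-\lambda) = \tfrac12(\sigma-\rho+\lambda)$ matches.
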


\begin{proof}
First let us consider the case where $\gamma$ is an interior angle of $\widetilde{D}$. Again, as in the case of the NN and NON wedges, we begin with the integrals over the angles. We first observe that by Fubini's theorem \begin{equation}
\begin{split}
    \int_{\lambda}^{\rho}d\phi \int_{\lambda}^{\rho}d\phi_{0} \; \Phi_{\sigma}(\theta,\phi,\phi_{0}) & = \int_{0}^{\rho}d\phi\int_{0}^{\rho}d\phi_{0} \; \Phi_{\sigma}(\theta,\phi,\phi_{0}) +\int_{0}^{\lambda}d\phi\int_{0}^{\lambda}d\phi_{0}\; \Phi_{\sigma}(\theta,\phi,\phi_{0})  \\ & \qquad -2\int_{0}^{\lambda}d\phi\int_{0}^{\rho}d\phi_{0} \; \Phi_{\sigma}(\theta,\phi,\phi_{0})
\end{split}
\end{equation} and we have treated the first two terms on the right-hand side in the NON wedge case. So only the third term needs to be treated. We have that \begin{equation}\label{NOONAngle1}
    \begin{split}
        &\int_{0}^{\lambda}d\phi\int_{0}^{\rho}d\phi_{0}\cosh((\pi-|\phi-\phi_{0}|)\theta) \\ & \qquad  = \frac{2\lambda}{\theta}\sinh(\pi\theta) + \frac{1}{\theta^{2}}\big[\cosh((\pi-\lambda)\theta) +\cosh((\pi-\rho)\theta) -\cosh(\pi\theta) \\
        & \qquad \qquad -\cosh((\pi+\lambda-\rho)\theta)\big]
        \\ & \qquad = \frac{2\lambda}{\theta}\sinh(\pi\theta) + \frac{1}{2\theta^{2}\sinh(\sigma\theta)}\big[\sinh((\pi+\sigma-\lambda)\theta) -\sinh((\pi-\sigma-\lambda)\theta)  \\& \qquad \qquad + \sinh((\pi+\sigma-\rho)\theta) - \sinh((\pi-\sigma-\rho)\theta) - \sinh((\pi+\sigma)\theta) \\ & \qquad \qquad +\sinh((\pi-\sigma)\theta) -\sinh((\pi+\sigma-\rho+\lambda)\theta)+\sinh((\pi-\sigma-\rho+\lambda)\theta) \big],
    \end{split}
\end{equation} as well as \begin{equation}\label{NOONAngle2}
    \begin{split}
        & \frac{\sinh(\pi\theta)}{\sinh(\sigma\theta)} \int_{0}^{\lambda}d\phi \int_{0}^{\rho}d\phi_{0} \cosh((\phi+\phi_{0}-\sigma)\theta) \\ & \enspace \enspace = \frac{\sinh(\pi\theta)}{\theta^{2}\sinh(\sigma\theta)} \big[\cosh((\sigma-\rho-\lambda)\theta) + \cosh(\sigma\theta) - \cosh((\sigma-\lambda)\theta) \\ & \enspace\enspace\enspace\enspace -\cosh((\sigma-\rho)\theta) \big] \\ & \enspace \enspace = \frac{1}{2\theta^{2}\sinh(\sigma\theta)} \big[2\sinh(\pi\theta)\cosh((\sigma-\rho-\lambda)\theta) + \sinh((\pi+\sigma)\theta) \\ & \enspace\enspace\enspace\enspace + \sinh((\pi-\sigma)\theta) -\sinh((\pi+\sigma-\lambda)\theta)-\sinh((\pi-\sigma+\lambda)\theta) \\ &
        \enspace\enspace\enspace\enspace -\sinh((\pi+\sigma-\rho)\theta)-\sinh((\pi-\sigma+\rho)\theta) \big],
    \end{split}
\end{equation} and that \begin{equation}\label{NOONAngle3}
    \begin{split}
        & \frac{\sinh((\pi-\sigma)\theta)}{\sinh(\sigma\theta)} \int_{0}^{\lambda}d\phi \int_{0}^{\rho}d\phi_{0} \cosh((\phi-\phi_{0})\theta) \\ & \enspace \enspace = \frac{\sinh((\pi-\sigma)\theta)}{\theta^{2}\sinh(\sigma\theta)} \big[\cosh(\lambda\theta)+\cosh(\rho\theta) - \cosh((\rho-\lambda)\theta)-1 \big] \\ & \enspace \enspace = \frac{1}{2\theta^{2}\sinh(\sigma\theta)} \big[ \sinh((\pi-\sigma+\lambda)\theta) + \sinh((\pi-\sigma-\lambda)\theta)  \\
        & \enspace\enspace\enspace\enspace +\sinh((\pi - \sigma + \rho)\theta) + \sinh((\pi - \sigma - \rho)\theta)
        -\sinh((\pi-\sigma+\rho-\lambda)\theta)\\
        & \enspace\enspace\enspace\enspace  - \sinh((\pi-\sigma-\rho+\lambda)\theta) - 2\sinh((\pi-\sigma)\theta)\big].
    \end{split}
\end{equation} Summing \eqref{NOONAngle1}, \eqref{NOONAngle2}, and \eqref{NOONAngle3} we obtain that \begin{equation}\label{eqn:NOON_Sum}
\begin{split}
    \int_{0}^{\lambda}d\phi \int_{0}^{\rho} d\phi_{0} \; \Phi_{\sigma}(\theta,\phi,\phi_{0}) & = \frac{2\lambda}{\theta}\sinh(\pi\theta) +\frac{1}{\theta^{2}\sinh(\sigma\theta)}\bigg(\sinh(\pi\theta)\cosh((\sigma-\rho-\lambda)\theta) \\
    & \enspace \enspace -\frac{1}{2}\sinh((\pi +\sigma-\rho+\lambda)\theta)-\frac{1}{2}\sinh((\pi -\sigma+\rho-\lambda)\theta)\bigg) \\
    & = \frac{2\lambda}{\theta}\sinh(\pi\theta) +\frac{\sinh(\pi\theta)}{\theta^{2}\sinh(\sigma\theta)}\Big(\cosh((\sigma-\rho-\lambda)\theta) \\
    & \enspace \enspace -\cosh((\sigma-\rho+\lambda)\theta)\Big).
\end{split}
\end{equation} Hence, using the identity \eqref{area_term} and the identity \eqref{bessel_trick} twice, we have that \begin{equation}
\begin{split}
    & \mathcal{L}^{-1}\left\lbrace\int_{0}^{R}r \; dr\int_{0}^{\infty}r_{0} \; dr_{0} \int_{0}^{\lambda}d\phi\int_{0}^{\rho}d\phi_{0}\; G_{W_{\sigma}}(s,r,\phi,r_{0},\phi_{0})\right\rbrace (t) \\
    & \enspace \enspace = \frac{1}{2}\lambda R^{2} - S_{2}(t)+ t \int_{0}^{\infty}d\theta\frac{\cosh(\tfrac{\pi}{2}\theta)\left(\cosh((\sigma-\rho-\lambda)\theta)-\cosh((\sigma-\rho+\lambda)\theta)\right)}{2\sinh(\sigma\theta)\sinh(\tfrac{\pi}{2}\theta)}
\end{split}
\end{equation} where \begin{equation}
\begin{split}
S_{2}(t) = \mathcal{L}^{-1}\bigg\lbrace & \frac{1}{s} \int_{R}^{\infty} r \; dr\int_{0}^{\infty}d\theta \; K_{i\theta}(r\sqrt{s}) \\ & \times \frac{\cosh\left(\frac{\pi}{2}\theta\right)\left(\cosh((\sigma-\rho-\lambda)\theta)-\cosh((\sigma-\rho+\lambda)\theta)\right)}{\pi\theta\sinh(\sigma\theta)} \bigg\rbrace(t).
\end{split}
\end{equation} Now observing that \begin{equation}
    \begin{split}
        & \frac{\cosh\left(\frac{\pi}{2}\theta\right)\cosh((\sigma-\rho-\lambda)\theta)}{\pi\theta\sinh(\sigma\theta)}-\frac{\cosh\left(\frac{\pi}{2}\theta\right)\cosh((\sigma-\rho+\lambda)\theta)}{\pi\theta\sinh(\sigma\theta)}  \\
        & \qquad \qquad \qquad = \frac{\cosh((\tfrac{\pi}{2}+\sigma-\rho-\lambda)\theta)-1}{2\pi\theta \sinh(\sigma\theta)} + \frac{\cosh((\tfrac{\pi}{2}-\sigma+\rho+\lambda)\theta)-1}{2\pi\theta \sinh(\sigma\theta)} \\
        & \qquad \qquad \qquad \quad - \frac{\cosh((\tfrac{\pi}{2}+\sigma-\rho+\lambda)\theta)-1}{2\pi\theta \sinh(\sigma\theta)}  - \frac{\cosh((\tfrac{\pi}{2}-\sigma+\rho-\lambda)\theta)-1}{2\pi\theta \sinh(\sigma\theta)}
    \end{split}
\end{equation} by using Lemmata \ref{angle_remainder}(i) and \ref{angle_remainder}(ii) we have $S_{2}(t) = O\left(te^{-R^2C_{\sigma,\rho,\lambda}/t}\right)$ where $C_{\sigma,\rho,\lambda}>0$ is a constant depending only on $\sigma$, $\rho$ and $\lambda$. Now by the linearity of inverse Laplace transform we have that \begin{equation}
\begin{split}
    & \mathcal{L}^{-1}\left\lbrace \int_{0}^{R} r dr\int_{\lambda}^{\rho}d\phi \int_{0}^{\infty}r_{0}dr_{0}\int_{\lambda}^{\rho} d\phi_{0} G_{W_{\sigma}}(s,r,\phi,r_{0},\phi_{0})\right\rbrace(t) \\
    & \enspace \enspace = \mathcal{L}^{-1}\left\lbrace \int_{0}^{R} r dr\int_{0}^{\rho}d\phi \int_{0}^{\infty}r_{0}dr_{0}\int_{0}^{\rho} d\phi_{0} G_{W_{\sigma}}(s,r,\phi,r_{0},\phi_{0})\right\rbrace(t) \\
    & \enspace \enspace \enspace \enspace + \mathcal{L}^{-1}\left\lbrace \int_{0}^{R} r dr\int_{0}^{\lambda}d\phi \int_{0}^{\infty}r_{0}dr_{0}\int_{0}^{\lambda} d\phi_{0} G_{W_{\sigma}}(s,r,\phi,r_{0},\phi_{0})\right\rbrace(t)\\
    & \enspace \enspace \enspace \enspace -2 \mathcal{L}^{-1}\left\lbrace \int_{0}^{R} r dr\int_{0}^{\lambda}d\phi \int_{0}^{\infty}r_{0}dr_{0}\int_{0}^{\rho} d\phi_{0} G_{W_{\sigma}}(s,r,\phi,r_{0},\phi_{0})\right\rbrace(t) \\
    & \enspace \enspace = \frac{1}{2}(\rho-\lambda)R^{2} - \frac{2R}{\pi^{1/2}}t^{1/2}+\hat{c}(\sigma,\rho,\lambda)t \\
    & \enspace \enspace \enspace \enspace + \frac{2R}{\pi^{1/2}}t^{1/2}\int_{1}^{\infty}\frac{dv}{v^{2}}\int_{0}^{1}dy\frac{y}{(1-y^{2})^{1/2}} e^{-R^{2}y^{2}v^{2}/4t} + O\left(te^{-R^{2}C_{\sigma,\rho,\lambda}/4t}\right).
\end{split}
\end{equation} Undoing the variable substitution one sees that, $\hat{c}(\sigma,\rho,\lambda)=c(\gamma,\beta,\alpha)$ with $c$ as defined in  Theorem \ref{thm:main} and $\frac{1}{2}(\rho-\lambda)R^{2} = \frac{1}{2}\gamma R^{2} = |\widetilde{S}_{v}(R)|$.

The case where $\beta$ and $\alpha$ are interior angles of $\widetilde{D}$ follows on from this. Note that by Fubini's theorem we have that \begin{equation}
\begin{split}
    & \int_{(0,\lambda)\cup(\rho,\sigma)}d\phi \int_{(0,\lambda)\cup(\rho,\sigma)} d\phi_{0} \; \Phi_{\sigma}(\theta,\phi,\phi_{0}) \\
    & \qquad \qquad = \int_{0}^{\sigma}d\phi \int_{0}^{\sigma} d\phi_{0}\; \Phi_{\sigma}(\theta,\phi,\phi_{0}) + \int_{\lambda}^{\rho}d\phi \int_{\lambda}^{\rho} d\phi_{0}\; \Phi_{\sigma}(\theta,\phi,\phi_{0}) \\
    & \qquad \qquad \quad - 2 \int_{\lambda}^{\rho}d\phi \int_{0}^{\sigma} d\phi_{0}\; \Phi_{\sigma}(\theta,\phi,\phi_{0}).
\end{split}
\end{equation} We see from \eqref{eqn:NOON_Sum} that
\begin{equation}
    \int_{\lambda}^{\rho}d\phi \int_{0}^{\sigma} d\phi_{0}\; \Phi_{\sigma}(\theta,\phi,\phi_{0}) = \frac{2(\rho-\lambda)}{\theta}\sinh(\pi\theta).
\end{equation}
Hence by \eqref{area_term} we have \begin{equation}
    \mathcal{L}^{-1}\left\lbrace 2 \int_{0}^{R} r dr\int_{\lambda}^{\rho}d\phi \int_{0}^{\infty}r_{0}dr_{0}\int_{0}^{\sigma} d\phi_{0} G_{W_{\sigma}}(s,r,\phi,r_{0},\phi_{0})\right\rbrace(t) = (\rho-\lambda)R^{2},
\end{equation}
and by Lemma \ref{lem:NN} we have
\begin{equation}
    \mathcal{L}^{-1}\left\lbrace \int_{0}^{R} r dr\int_{0}^{\sigma}d\phi \int_{0}^{\infty}r_{0}dr_{0}\int_{0}^{\sigma} d\phi_{0} G_{W_{\sigma}}(s,r,\phi,r_{0},\phi_{0})\right\rbrace(t) = \frac{\sigma}{2}R^{2}.
\end{equation}
Thus, we have \begin{equation}
    \begin{split}
        & \mathcal{L}^{-1}\left\lbrace \int_{0}^{R} r \; dr\int_{(0,\lambda)\cup(\rho,\sigma)}d\phi \int_{0}^{\infty}r_{0} \; dr_{0}\int_{(0,\lambda)\cup(\rho,\sigma)} d\phi_{0} G_{W_{\sigma}}(s,r,\phi,r_{0},\phi_{0})\right\rbrace(t) \\
        & \qquad \qquad = \frac{1}{2}(\sigma + \rho - \lambda - 2(\rho-\lambda))R^{2} - \frac{2R}{\pi^{1/2}}t^{1/2}+\hat{c}(\sigma,\rho,\lambda)t \\
    & \qquad \qquad \quad + \frac{2R}{\pi^{1/2}}t^{1/2}\int_{1}^{\infty}\frac{dv}{v^{2}}\int_{0}^{1}dy\frac{y}{(1-y^{2})^{1/2}} e^{-R^{2}y^{2}v^{2}/4t} + O\left(te^{-R^{2}C_{\sigma,\rho,\lambda}/4t}\right) \\
    & \qquad \qquad = \frac{1}{2}(\beta+\alpha)R^{2} - \frac{2R}{\pi^{1/2}}t^{1/2}+c(\gamma,\beta,\alpha)t \\
    & \qquad \qquad \quad + \frac{2R}{\pi^{1/2}}t^{1/2}\int_{1}^{\infty}\frac{dv}{v^{2}}\int_{0}^{1}dy\frac{y}{(1-y^{2})^{1/2}} e^{-R^{2}y^{2}v^{2}/4t} + O\left(te^{-R^{2}C_{\gamma,\beta,\alpha}/4t}\right)
    \end{split}
\end{equation} which completes the proof.

\end{proof}

\section{Comparisons for model computations}

We now show that the difference between our model heat content contributions and the actual heat content contributions are exponentially small as $t\downarrow 0$.
Although some comparison results for Neumann heat kernels are known \cite{NRS20}, these were not sufficient for our purposes. In addition, the Neumann heat kernel does not satisfy all properties that the Dirichlet heat kernel does.
For example, domain monotonicity for the Neumann heat kernel holds in some cases, see for example \cite{CZ94,C86,H94,K89}, but does not hold in general \cite{BB93}.

We obtain the required results using probabilistic methods as has been done before in \cite{vdBS90} for Dirichlet boundary conditions. The probabilistic interpretation arises from the fact that transition densities for Brownian motion (BM) are given by heat kernels. We note that for us BM will be associated with the operator $-\Delta + \partial_{t}$, which is merely the `standard' BM in the probability literature run at twice the speed.
Moreover, imposing Neumann boundary conditions corresponds to reflecting BM at the boundary, which we call reflecting Brownian motion (RBM). A good summary of RBM's and their relation to Neumann heat kernels for $C^{3}$ domains is given in \cite{H84}. We use the construction of RBM's in polygonal domains given by Gallavotti and McKean in \cite{GM72}, which we summarise below for convenience of the reader.

\begin{figure}
    \centering
    \begin{tikzpicture}
    \draw[->, line width=0.5mm,yshift=0.866cm,xshift=-4.5cm] (0,0) -- (2,0);
    \draw [xshift = -7cm] (0,0) -- (2,0) -- (1,1.732) -- cycle;
    \node[xshift=-7cm] at (1,0.577) {$D^{*}$};
    \draw (0,0) -- (2,0) -- (1,1.732) -- (-1,1.732) -- (-2,0) -- (-1, -1.732) -- (1,-1.732) -- (2,0);
    \draw (0,0) -- (1,1.732);
    \draw (0,0) -- (-1,1.732);
    \draw (0,0) -- (-2,0);
    \draw (0,0) -- (-1,-1.732);
    \draw (0,0) -- (1,-1.732);
    \draw[xshift=2cm] (0,0) -- (1,1.732);
    \draw[xshift=2cm] (0,0) -- (2,0);
    \draw[xshift=2cm] (0,0) -- (1,-1.732);
    \draw[xshift = 2cm] (1,-1.732) -- (2,0) -- (1,1.732) -- (-1,1.732);
    \draw[xshift = 4cm] (1,-1.732) -- (2,0) -- (0,0);
    \draw[xshift = 5cm, yshift = -1.732cm] (-1,1.732) -- (0,0) -- (-2,0);
    \draw[xshift = 1cm, yshift = 1.732cm] (0,0) -- (1,1.732) -- (2,0);
    \node at (1,0.577) {$D^{*}$};
    \node at (0,1) {$D^{*}_{1}$};
    \node at (-1,0.577) {$D^{*}_{12}$};
    \node at (-1,-0.577) {$D^{*}_{121}$};
    \node at (0,-1.2) {$D^{*}_{1212}$};
    \node at (1,-0.4) {$D^{*}_{12121}$};
    \node[xshift=2cm] at (0,1) {$D^{*}_{3}$};
    \node[xshift=2cm] at (1,0.577) {$D^{*}_{32}$};
    \node[xshift=2cm] at (1,-0.5) {$D^{*}_{323}$};
    \node[xshift=4cm] at (0,-1.2) {$D^{*}_{3231}$};
    \node[xshift=4cm] at (1,-0.4) {$D^{*}_{32313}$};
    \node[xshift=1cm,yshift=1.732cm] at (1,0.577) {$D^{*}_{31}$};
    \filldraw[black,xshift=-7cm] (0,0) circle (0.1cm);
    \filldraw[black,xshift=-7cm] (2,0) circle (0.1cm);
    \filldraw[black,xshift=-7cm] (1,1.732) circle (0.1cm);
    \filldraw[black] (0,0) circle (0.1cm);
    \filldraw[black] (1,1.732) circle (0.1cm);
    \filldraw[black] (-1,1.732) circle (0.1cm);
    \filldraw[black] (-2,0) circle (0.1cm);
    \filldraw[black] (-1,-1.732) circle (0.1cm);
    \filldraw[black] (1,-1.732) circle (0.1cm);
    \filldraw[black] (2,0) circle (0.1cm);
    \filldraw[black,xshift=4cm] (-2,0) circle (0.1cm);
    \filldraw[black,xshift=4cm] (-1,1.732) circle (0.1cm);
    \filldraw[black,xshift=4cm] (-1,-1.732) circle (0.1cm);
    \filldraw[black,xshift=4cm] (0,0) circle (0.1cm);
    \filldraw[black,xshift=4cm] (1,-1.732) circle (0.1cm);
    \filldraw[black,xshift=4cm] (2,0) circle (0.1cm);
    \filldraw[black,xshift=1cm,yshift=1.732cm] (1,1.732) circle (0.1cm);
    \filldraw[white,xshift=-7cm] (1,0) circle (0.2cm) node {\color{black!50} 2};
    \filldraw[white,xshift=-7cm] (1.5,0.866) circle (0.2cm) node {\color{black!50} 3};
    \filldraw[white,xshift=-7cm] (0.5,0.866) circle (0.2cm) node {\color{black!50} 1};
    \filldraw[white] (1,0) circle (0.2cm) node {\color{black!50} 2};
    \filldraw[white] (1.5,0.866) circle (0.2cm) node {\color{black!50} 3};
    \filldraw[white] (0.5,0.866) circle (0.2cm) node {\color{black!50} 1};
    \filldraw[white] (-0.5,0.866) circle (0.2cm) node {\color{black!50} 2};
    \filldraw[white] (-1,0) circle (0.2cm) node {\color{black!50} 1};
    \filldraw[white] (-0.5,-0.866) circle (0.2cm) node {\color{black!50} 2};
    \filldraw[white] (0.5,-0.866) circle (0.2cm) node {\color{black!50} 1};
    \filldraw[white,xshift=2cm] (0,1.732) circle (0.2cm) node {\color{black!50} 1};
    \filldraw[white,xshift=2cm] (0.5,0.866) circle (0.2cm) node {\color{black!50} 2};
    \filldraw[white,xshift=2cm] (1,0) circle (0.2cm) node {\color{black!50} 3};
    \filldraw[white,xshift=4cm] (-0.5,-0.866) circle (0.2cm) node {\color{black!50} 1};
    \filldraw[white,xshift=4cm] (0.5,-0.866) circle (0.2cm) node {\color{black!50} 3};
    \node[xshift=4cm,yshift=1.732cm] at (0,0) {$M$};
    \end{tikzpicture}
    \caption{Visualisation of part of the manifold generated by an equilateral triangle $D^{*}$. Note that conventionally we would identify $D^{*}_{12121}$ with $D^{*}_{2}$ but we treat them as distinct here.
    (This figure is an adaptation of Figures 2 and 3 in \cite{GM72}.)}
    \label{fig:manifold}
\end{figure}

Let $D$ be a, possibly unbounded, polygonal domain whose edges are labelled $1$ through $n$. Then for each string $a_{1}a_{2}\cdots a_{n}$ with $1\leq a_{i} \leq n$ and $a_{i+1}\neq a_{i}$ we can obtain a copy $\overline{D}_{a_{1}a_{2}\cdots a_{n}}$ of $\overline{D}$ by reflecting $\overline{D}$ across the sides $a_{1},\; a_{2}, \; \ldots ,a_{n}$ successively. We also have the copy of $\overline{D}$ obtained by carrying out no reflection and simply denote it $\overline{D}$. We declare each of these copies of $\overline{D}$ to be different and the collection $K$ of all these copies is a covering sheet of $\overline{D}$. From $K$, we can obtain the open manifold $M:= K - \lbrace \text{images of vertices of $D$}\rbrace$, which we view as a flat Riemannian manifold and call it the manifold generated by $D$. Figure \ref{fig:manifold} shows a visualisation of this for an equilateral triangle. For $M$, there is the self-evident continuous projection $\Psi_{M} : M \to D^{*} := (D\cup \partial D)\backslash V$, where $V$ denotes the vertices of $D$.
We obtain an RBM $X$ on $D$ by simply projecting a BM $B$ on $M$ onto $D^{*}$ via $\Psi_{M}$, i.e. $X = \Psi_{M} \circ B$.

Before proving our comparisons, let us briefly give some notation: $\mathbb{P}_{x}^{\Omega}$ is the probability measure associated with BM on $\Omega$ if $\Omega$ is a manifold and RBM on $\Omega$ if $\Omega$ is a polygonal domain; $X$ and $Y$ will denote RBM's on some given polygonal domains; and $B$, possibly with some superscript, will denote a BM on a manifold generated by a polygonal domain or on $\R^{2}$. We note that the following lemma was inspired by ideas in \cite{Bellot}.

\begin{lemma}\label{lem:probabilistic_comparisons}

Let $D$ be a polygonal domain and $\delta > 0$ fixed.
\begin{enumerate}[label=(\roman*)]
    \item Let $x \in D$ with $d(x,\partial D) \geq \delta$, then we have that for any Borel sets $A_{1}\subset D^{*}$ and $A_{2}\subset \mathbb{R}^{2}$ with $B_{\delta}(x)\cap A_{1} = B_{\delta}(x)\cap A_{2}$, \begin{equation}
        \left|\int_{A_{1}}dy \; \eta_{D}(t;x,y)-\int_{A_{2}} dy\; p_{\mathbb{R}^{2}}(t;x,y)\right| \leq 4e^{-\delta^{2}/8t}.
    \end{equation}
    \item Let $\widetilde{e}$ be an edge of $\partial D$ and $x \in D$ with $d(x,\widetilde{e}) \leq \delta$ and $d(x,\partial D\backslash \widetilde{e}) \geq \delta$. Let $\mathbb{H}_{\widetilde{e}}$ denote the half-plane with $\widetilde{e}\subset \partial \mathbb{H}_{\widetilde{e}}$ and $x \in \mathbb{H}_{\widetilde{e}}$. Then we have that for any Borel sets $A_{3}\subset D^{*}$ and $A_{4}\subset \mathbb{H}_{\widetilde{e}}$ with $A_{3}\cap D^{*} \cap B_{\delta}(x) = A_{4}\cap \overline{\mathbb{H}_{\widetilde{e}}} \cap B_{\delta}(x)$, \begin{equation}
        \left|\int_{A_{3}}dy \; \eta_{D}(t;x,y)-\int_{A_{4}} dy\; \eta_{\mathbb{H}_{\widetilde{e}}}(t;x,y)\right| \leq 4e^{-\delta^{2}/8t}.
    \end{equation}
    \item Let $v$ be a vertex of $\partial D$ with interior angle $\gamma$. Let $W_{\gamma}$ be the infinite wedge of angle $\gamma$ with vertex at $v$ and suppose that $B_{2\delta}(v)\cap D= B_{2\delta}(v) \cap W_{\gamma}$. Then for any Borel sets $A_{5} \subset D^{*}$ and $A_{6} \subset W_{\gamma}^{*}$ with $A_{5} \cap D^{*}\cap B_{2\delta}(x) = A_{6} \cap W_{\gamma}^{*} \cap B_{2\delta}(x)$, then \begin{equation}
        \left|\int_{A_{5}}dy \; \eta_{D}(t;x,y)-\int_{A_{6}} dy\; \eta_{W_{\gamma}}(t;x,y)\right| \leq 4e^{-\delta^{2}/8t}.
    \end{equation}
\end{enumerate}
\end{lemma}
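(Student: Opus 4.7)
The approach is a probabilistic coupling built on the Gallavotti--McKean manifold $M_{D}$ generated by $D$. The unifying plan is, in each of the three parts, to identify a region $U \subset M_{D}$ containing a lift $x^{*}$ of $x$ that is isometric, as a flat Riemannian manifold-with-corner, to the analogous region in the comparison manifold (namely $\R^{2}$ in (i), the two-sheeted cover $M_{\mathbb{H}_{\widetilde{e}}}$ in (ii), and the wedge manifold $M_{W_{\gamma}}$ in (iii)); then to couple a BM $B^{D}$ on $M_{D}$ started at $x^{*}$ with a BM $B'$ on the comparison manifold started at the corresponding lift so that both processes agree (under the isometry) up to their common exit time $\tau$ from $U$. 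Writing $X_{t}=\Psi_{M_{D}}(B^{D}_{t})$ and using that the integral against $\eta_{D}$ equals $\mathbb{P}^{M_{D}}_{x^{*}}(X_{t}\in\,\cdot)$ (with the analogous identity on the comparison manifold), the matching hypothesis on the two Borel sets inside the relevant ball forces the two indicator functions to agree on $\{\tau\geq t\}$. Splitting each expectation according to $\{\tau < t\}$ and $\{\tau \geq t\}$ then reduces the desired estimate to bounding $2\mathbb{P}(\tau<t)$.

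Next I would identify $U$ in each case. For (i), the hypothesis $d(x,\partial D)\geq\delta$ keeps $B_{\delta}(x^{*})\subset M_{D}$ inside the distinguished copy of $\overline{D}$, where $\Psi_{M_{D}}$ restricts to an isometry onto $B_{\delta}(x)\subset\R^{2}$; hence $U:=B_{\delta}(x^{*})$ works. For (ii), $d(x,\partial D\setminus\widetilde{e})\geq\delta$ means that the only boundary stratum of $M_{D}$ that $B_{\delta}(x^{*})$ can meet is the lift of $\widetilde{e}$; the ball is thus contained in $\overline{D}\cup\overline{D}_{\widetilde{e}}$ and maps isometrically onto $B_{\delta}(x)\subset M_{\mathbb{H}_{\widetilde{e}}}\cong\R^{2}$, giving $U:=B_{\delta}(x^{*})$ again. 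For (iii), the hypothesis $B_{2\delta}(v)\cap D = B_{2\delta}(v)\cap W_{\gamma}$ means that reflecting $\overline{D}$ across the two edges incident to $v$ produces, within distance $2\delta$ of $v^{*}$, precisely the same wedge-of-copies of the polygon as appears in $M_{W_{\gamma}}$, so $U$ can be taken to be the ball of radius $2\delta$ around $v^{*}$ inside this fan.

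In each case, $\mathbb{P}(\tau<t)$ is bounded above by the probability that planar BM exits a Euclidean ball of radius $\delta$ (or $2\delta$ in (iii)) by time $t$, and the standard small-time Gaussian tail estimate $\mathbb{P}(\tau<t)\leq 2 e^{-\delta^{2}/8t}$ applies; this is essentially the same Gaussian bound that underlies the `not feeling the boundary' estimate invoked earlier in the paper. Combining this with the coupling reduction gives $|P_{1}-P_{2}|\leq 2\mathbb{P}(\tau<t)\leq 4 e^{-\delta^{2}/8t}$, as required.

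The hardest step is the geometric verification in (iii): one must check rigorously that the reflected wedge fan around $v^{*}$, restricted to the $2\delta$-ball inside $M_{D}$, is genuinely isometric to its counterpart in $M_{W_{\gamma}}$, with no extra identifications arising from other vertices or edges of $D$ lying inside $B_{2\delta}(v)$. The hypothesis $B_{2\delta}(v)\cap D = B_{2\delta}(v)\cap W_{\gamma}$ is precisely what rules this out, but one needs to pair it with the combinatorial definition of $M_{D}$ to be sure the fan of copies lifts faithfully. Once this local isometry is in hand, the coupling and exit-time estimate are routine.
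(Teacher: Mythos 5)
Your proposal follows essentially the same route as the paper: lift to the Gallavotti--McKean manifold $M_D$, identify a local region around the lift of $x$ that is isometric to the corresponding region in the comparison manifold, decompose the two probabilities according to whether the lifted Brownian motion has exited this region by time $t$, observe the indicators agree on the non-exit event, and bound the exit probability by a Gaussian tail. The paper phrases this through an equality of laws ``$\mathbb{P}^{M_1}_x(B^{(1)}_t\in A,\tau_{N_1}>t)=\mathbb{P}^{M_2}_{h(x)}(B^{(2)}_t\in h(A),\tau_{N_2}>t)$'' whereas you phrase it as a pathwise coupling, but these are the same mechanism.

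Three small points worth tightening up if you were to write this out in full. First, in (iii) there is no lift ``$v^*$'': the vertex $v$ is excised from $M_D$, so the correct object is the connected component of the preimage of the punctured ball $F(v,2\delta,D)=\{z\in D^*:|z-v|<2\delta\}$, which is the fan of sectors you describe. Second, for the exit-time bound in (iii) the exit region is centred at $v$ with radius $2\delta$, but the process starts at $x$; one needs $|x-v|\le\delta$ (implicit in the lemma as applied, and in the paper's proof where it says ``since $x\in F(v,\delta,W_\gamma)$'') so that the radial component must still travel distance $\ge\delta$ to exit, giving $e^{-\delta^2/8t}$ rather than $e^{-(2\delta)^2/8t}$. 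You blur this by quoting radius $2\delta$ but a $\delta^2$ exponent. Third, your intermediate constants contain two compensating inaccuracies: the split actually yields $|P_1-P_2|\le\mathbb{P}(\tau<t)$, not $2\mathbb{P}(\tau<t)$ (since one of the two conditional pieces can simply be dropped, as in the paper's inequality chain), while the standard planar-ball exit estimate the paper cites from van den Berg--Srisatkunarajah is $\mathbb{P}(\tau<t)\le 4e^{-\delta^2/8t}$, not $2e^{-\delta^2/8t}$. These two errors cancel, so the final constant $4e^{-\delta^2/8t}$ is unaffected, but each individual claim as stated is off.

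Finally, note a subtle but harmless difference in (ii): you take $U=B_\delta(x^*)$ to be the geodesic (equivalently, Euclidean) ball around the lift $x^*$, whereas the paper takes the connected component of the full preimage of $F(x,\delta,D)$, which is a union of two overlapping $\delta$-balls and is strictly larger. Your smaller choice still works, and in fact is marginally cleaner since the exit time from it is exactly the planar-ball exit time, bypassing the paper's additional step of bounding the reflected process by the free one.
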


\begin{proof}

We do the comparisons at the manifold level. Let $N_{1} \subset M_{1}$ and $N_{2} \subset M_{2}$ be submanifolds of some Riemannian manifolds $M_{1}$ and $M_{2}$. Suppose there is an isometry $h: N_{1} \to N_{2}$, then we have that \begin{equation}
    \mathbb{P}_{x}^{M_{1}}(B_{t}^{(1)} \in A, \tau_{N_{1}} > t) = \mathbb{P}_{h(x)}^{M_{2}}(B_{t}^{(2)} \in h(A), \tau_{N_{2}} > t),
\end{equation} where: $A\subset M_{1}$ is a Borel set, $B^{(1)}$ is a BM on $M_{1}$ started at $x\in M_{1}$; $B^{(2)}$ is a BM on $M_{2}$ started at $h(x)\in M_{2}$; and, $\tau_{N_{1}}$ and $\tau_{N_{2}}$ are the first exit times of $B^{(1)}$ and $B^{(2)}$ from $N_{1}$ and $N_{2}$ respectively.

Throughout $M_{1}$ will always be the manifold generated by $D$ and $M_{2}$ will either be $\R^{2}$ or another manifold generated by a polygonal domain. For notational purposes, for a polygonal domain $D$ we define the set $$F(y,\delta,D):= \lbrace x\in D^{*}: |x-y| < \delta \rbrace$$ and for an RBM $X$ on $D$ we define $\tau_{X}(y,\delta)$ as the first exit time of $X$ from $F(y,\delta,D)$, that is $$\tau_{X}(y,\delta) := \inf\lbrace t\geq 0 : |X_{t} - y| \geq \delta\rbrace.$$ If $y=x$, then we simply denote this quantity by $\tau_{X}(\delta).$

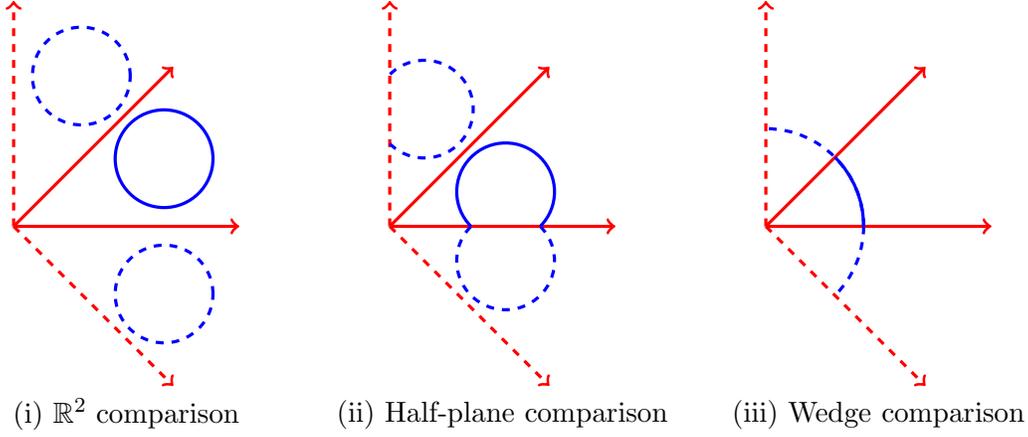
\begin{figure}
    \centering
    \begin{tikzpicture}
    \draw[red, very thick, ->] (0,0) -- (3,0);
    \draw[red, very thick, ->] (0,0) -- (2.121,2.121);
    \draw[red, very thick, ->,dashed] (0,0) -- (2.121,-2.121);
    \draw[red, very thick, ->,dashed] (0,0) -- (0,3);
    \draw[blue,very thick] (2,0.9) circle (0.65cm);
    \draw[blue,very thick,dashed] (0.9,2) circle (0.65cm);
    \draw[blue,very thick,dashed] (2,-0.9) circle (0.65cm);

    \draw[red, very thick, ->,xshift=5cm] (0,0) -- (3,0);
    \draw[red, very thick, ->,xshift=5cm] (0,0) -- (2.121,2.121);
    \draw[red, very thick, ->,xshift=5cm,dashed] (0,0) -- (0,3);
    \draw[red, dashed, very thick, ->,xshift=5cm] (0,0) -- (2.121,-2.121);
    \draw[blue,xshift= 5cm,very thick] (2,0) arc (-45:225:0.65cm);
    \draw[blue,xshift= 5cm,very thick,dashed] (2,0) arc (45:-225:0.65cm);
    \draw[blue,xshift= 5cm,very thick,dashed] (0,1.10) arc (-135:135:0.65cm);

    \draw[red, very thick, ->,xshift=10cm] (0,0) -- (3,0);
    \draw[red, very thick, ->,xshift=10cm] (0,0) -- (2.121,2.121);
    \draw[red, very thick, ->,xshift=10cm,dashed] (0,0) -- (2.121,-2.121);
    \draw[red, very thick, ->,xshift=10cm,dashed] (0,0) -- (0,3);
    \draw[blue,very thick,xshift= 10cm,dashed] (0+1.3,0) arc (0:90:1.3cm);
    \draw[blue,very thick,xshift= 10cm,dashed] (0+1.3,0) arc (0:-45:1.3cm);
    \draw[blue,very thick,xshift= 10cm] (0+1.3,0) arc (0:45:1.3cm);

    \node at (1.5,-2.5) {(i) $\mathbb{R}^{2}$ comparison};
    \node[xshift = 5cm] at (1.5,-2.5) {(ii) Half-plane comparison};
    \node[xshift = 10cm] at (1.5,-2.5) {(iii) Wedge comparison};
    \end{tikzpicture}
    \caption{Local illustrations of the preimages of subsets of a polygonal domain $D$, whose exit times we are interested in, on the manifold generated by $D$.}
    \label{fig:killing_times}
\end{figure}

For (i), we have $M_{2} = \mathbb{R}^{2}$ and let $B^{(2)}$ be a Brownian motion on $M_{2}$. Let $X := \Psi_{M_{1}} \circ B^{(1)}$ be an RBM on $D$ given as the projection of a BM $B^{(1)}$ on $M_{1}$. Via the construction of $X$ we immediately have that \begin{equation}
    \mathbb{P}_{x}^{D}(X_{t} \in A_{1}, \tau_{X}(\delta) > t) = \mathbb{P}_{x}^{M_{1}}(B_{t}^{(1)} \in \Psi_{M_1}^{-1}(A_{1}), \tau_{N_{1}} > t)
\end{equation} where $N_{1}$ is the connected component of $\Psi^{-1}_{M_{1}}(F(x,\delta,D))$ containing $x$. Now $N_{1}$ is isometric to the ball $N_{2}=B_{\delta}(x) \subset \R^{2}$, see Figure \ref{fig:killing_times}(i), and so we deduce that \begin{equation}
    \mathbb{P}_{x}^{M_{1}}(B_{t}^{(1)} \in \Psi_{M_{1}}^{-1}(A_{1}), \tau_{N_{1}} > t) = \mathbb{P}_{x}^{\R^{2}}(B_{t}^{(2)} \in  A_{2}, \tau_{N_{2}} > t) = \mathbb{P}_{x}^{\R^{2}}(B_{t}^{(2)} \in  A_{2}, \tau_{B^{(2)}}(\delta) > t),
\end{equation} by the restriction of the exit time and that $A_{1}\cap B_{\delta}(x) = A_{2} \cap B_{\delta}(x)$. Hence \begin{equation}
    \mathbb{P}_{x}^{D}(X_{t} \in A_{1}, \tau_{X}(\delta) > t) = \mathbb{P}_{x}^{\R^{2}}(B_{t}^{(2)} \in  A_{2}, \tau_{B^{(2)}}(\delta) > t),
\end{equation} and since $A_{1},A_{2}$ are arbitrary in the statement we further deduce that \begin{equation}
    \mathbb{P}_{x}^{D}(\tau_{X}(\delta) \leq t) = \mathbb{P}_{x}^{\R^{2}}(\tau_{B^{(2)}}(\delta) \leq t).
\end{equation} From \cite[\S 3]{vdBS90}, we know the bound $\mathbb{P}_{x}^{\R^{2}}(\tau_{B^{(2)}}(\delta) \leq t) \leq 4 e^{-\delta^{2}/8t}$. Now we have \begin{equation}\label{eqn:switching_trick}
    \begin{split}
        \int_{A_{1}}dy \;\eta_{D}(t;x,y) &= \mathbb{P}_{x}^{D}(X_{t} \in A_{1}) \\
        & = \mathbb{P}_{x}^{D}(X_{t} \in A_{1}, \tau_{X}(\delta) > t)+\mathbb{P}_{x}^{D}(X_{t} \in A_{1}, \tau_{X}(\delta) \leq t) \\
        & \leq \mathbb{P}_{x}^{\R^{2}}(B_{t}^{(2)}\in A_{2}, \tau_{B^{(2)}}(\delta) > t)+\mathbb{P}_{x}^{D}(\tau_{X}(\delta) \leq t) \\
        & \leq \mathbb{P}_{x}^{\R^{2}}(B_{t}^{(2)} \in A_{2}) + 4 e^{-\delta^{2}/8t} \\
        & = \left(\int_{A_{2}} dy \; p_{\R^{2}}(t;x,y) \right) + 4 e^{-\delta^{2}/8t}.
    \end{split}
\end{equation} Reversing the roles of $X$ and $B^{(2)}$, we obtain the result.

For (ii) the strategy is essentially the same as for (i) except we are comparing $X$ with an RBM $Y$ on the half plane $\mathbb{H}_{\widetilde{e}}$. The key subtlety is that the manifold generated by $\mathbb{H}_{\widetilde{e}}$ is simply $M_{2}=\mathbb{R}^{2}$ again and $B^{(2)}$ will again denote a Brownian motion in the plane. Except we see that $\Psi_{M_{1}}^{-1}(F(x,\delta,D))$ is now comprised of the disjoint union of sets which are each the union of two overlapping balls of radius $\delta$, see Figure \ref{fig:killing_times}(ii), and we take $N_{1}$ to be the connected component of $\Psi_{M_{1}}^{-1}(F(\delta,x,D))$ containing $x$. Arguing in the same way as in (i), \begin{equation}
    \mathbb{P}_{x}^{D}(X_{t} \in A_{3}, \tau_{X}(\delta) > t) = \mathbb{P}_{x}^{\mathbb{H}_{\widetilde{e}}}(Y_{t} \in A_{4}, \tau_{Y}(\delta) > t)
\end{equation} and hence we see that $\mathbb{P}_{x}^{D}(\tau_{X}(\delta) \leq t) = \mathbb{P}_{x}^{\mathbb{H}_{\widetilde{e}}}( \tau_{Y}(\delta) \leq t)$. It is easy to see that \begin{equation}
    \mathbb{P}_{x}^{\mathbb{H}_{\widetilde{e}}}( \tau_{Y}(\delta) \leq t) \leq \mathbb{P}_{x}^{\R^{2}}(\tau_{B^{(2)}}(\delta) \leq t) \leq 4 e^{-\delta^{2}/8t}.
\end{equation} Hence following the argument as in \eqref{eqn:switching_trick} replacing $B^{(2)}$ with $Y$, we obtain the result.

For (iii) again the strategy is essentially the same as for (i) but now $M_{2}$ is the manifold generated by the wedge $W_{\gamma}$, $N_{1}$ and $N_{2}$ are the connected components of $\Psi_{M_{1}}^{-1}(F(v, 2\delta, D))$ and $\Psi_{M_{2}}^{-1}(F(v, 2\delta, W_{\gamma}))$ containing $x$ respectively, see Figure \ref{fig:killing_times}(iii), and we are comparing $X$ with an RBM $Y$ on the infinite wedge $W_\gamma$. But $N_{1}$ and $N_{2}$ are clearly isometric and thus one determines immediately as above that
\begin{equation}
   \mathbb{P}_{x}^{D}(X_{t} \in A_{5}, \tau_{X}(v,2\delta) > t) = \mathbb{P}_{x}^{W_{\gamma}}(Y_{t} \in A_{6}, \tau_{Y}(v,2\delta) > t)
\end{equation} and hence $\mathbb{P}_{x}^{D}(\tau_{X}(v,2\delta) \leq t) = \mathbb{P}_{x}^{W_{\gamma}}(\tau_{Y}(v,2\delta) \leq t)$.
The radial component of RBM in the Neumann wedge $W_{\gamma}^{*}$ is the same as that of a BM in $\mathbb{R}^{2}$.
So, since $x\in F(v,\delta,W_{\gamma})$, we see that $\mathbb{P}_{x}^{W_{\gamma}}(\tau_{Y}(v,2\delta) \leq t) \leq 4e^{-\delta^{2}/8t}$. Hence following the argument as in \eqref{eqn:switching_trick} replacing $B^{(2)}$ with $Y$, one  obtains the result.
\end{proof}

The proof of Theorem \ref{thm:model_justifications} now follows by combining Lemma \ref{lem:probabilistic_comparisons} with the model computations proven in the previous section. Let $R$ and $\delta$ be as they are defined in Section 2.

For $\widetilde{D}(R,\delta)$, we know that each $x\in \widetilde{D}(R,\delta)$ satisfies $d(x,\partial D) \geq \delta$ and so we immediately deduce that from Lemma \ref{lem:probabilistic_comparisons}(i) taking $A_{1}=A_{2}=\widetilde{D}$ that \begin{equation}
\begin{split}
    & \left| \int_{\widetilde{D}(R,\delta)} dx \int_{\widetilde{D}} dy \; \eta_{D}(t;x,y) - \int_{\widetilde{D}(R,\delta)} dx \int_{\widetilde{D}} dy \; p_{\R^{2}}(t;x,y) \right| \\
    & \qquad \qquad \leq \int_{\widetilde{D}(R,\delta)} dx \left| \int_{\widetilde{D}} dy \; \eta_{D}(t;x,y) -  \int_{\widetilde{D}} dy \; p_{\R^{2}}(t;x,y)\right| \\
    & \qquad \qquad \leq 4 |\widetilde{D}(R,\delta)| e^{-\delta^{2}/8t}.
\end{split}
\end{equation} This inequality also holds if we replace $\widetilde{D}(R,\delta)$ with an open cusp or with a rectangle $T_{\widetilde{e}}(R,\delta)$ lying on an open edge. For a sector $\widetilde{S}_{v}(R)$ at an open vertex, we note that $B_{2R}(v) \subset D$ and so for each $x\in \widetilde{S}_{v}(R)$ we have that $d(x,\partial D) \geq R$ and we can apply Lemma \ref{lem:probabilistic_comparisons}(i) as in the other cases.

For Neumann cusps and rectangles lying on Neumann edges, we apply Lemma \ref{lem:probabilistic_comparisons}(ii) in the same way except that we take $A_{3} = \widetilde{D}$ and $A_{4} = \mathbb{H}_{\widetilde{e}}$. Moreover, for sectors $\widetilde{S}_{v}(R)$ at NN, NON, and NOON vertices with interior angle $\sigma$ with respect to $D$, we know that for all $x\in \widetilde{S}_{v}(R)$ we have that $d(x,v) \leq R$ and that $S_{2R}(v) \cap D = S_{2R}(v) \cap W_{\sigma}$, for the infinite wedge $W_{\sigma}$ by our definition of $R$. Thus, we can apply Lemma \ref{lem:probabilistic_comparisons}(iii) as above with $A_{5} = \widetilde{D}$ and $A_{6}$ a suitable infinite wedge based at $v$ as described in Section 3 where we calculated the model computations for sectors at Neumann vertices.

\appendix

\section{Model calculations for other vertices}

The calculations in Section 3 can be extended to vertices with an arbitrary number of incident edges in $\widetilde{E}$. The case of open vertices lying in $D$ with an arbitrary number of incident edges which are all open is dealt with in \cite{vdBG16}. Here we outline the case when the vertex lies on the Neumann boundary $\partial D$. The notation used below is the same as that used in the proofs of Theorems \ref{NON_wedge} and \ref{NOON_wedge} in Section 3

We choose such a vertex $v\in \widetilde{V}$ with interior angle with respect to $D$ denoted $\sigma \in (0,2\pi)$. Translating $v$ to the origin and rotating as necessary, choose $R>0$ sufficiently small so that \begin{equation}
B_{R}(0) \cap \widetilde{D} = B_{R}(0) \cap \left(\bigcup_{i=1}^{k}W_{\lambda_{i}}^{\rho_{i}}\right)
\end{equation} where \begin{equation}
W_{\lambda_{i}}^{\rho_{i}} := \lbrace (r,\phi) : r > 0, \lambda_{i} < \phi < \rho_{i} \rbrace
\end{equation} and $0\leq \lambda_{1} < \rho_{1} < \lambda_{2} < \rho_{2} < \cdots < \lambda_{k} < \rho_{k} \leq \sigma$. Then, as in Section 2, we set $\widetilde{S}_{v}(R) = B_{R}(0) \cap \widetilde{D}$ and we model the heat content contribution from $\widetilde{S}_{v}(R)$ by \begin{equation}
    \begin{split}
        & \sum_{i=1}^{k}\sum_{j=1}^{k}\mathcal{L}^{-1} \left\lbrace \int_{0}^{R} r dr \int_{\lambda_{i}}^{\rho_{i}} d\phi \int_{0}^{\infty} r_{0} dr_{0} \int_{\lambda_{j}}^{\rho_{j}} d\phi_{0} \; G_{W_{\gamma}}(s;r,\phi,r_{0},\phi_{0}) \right\rbrace(t)  \\
        & \qquad = \sum_{i=1}^{k}\mathcal{L}^{-1} \left\lbrace \int_{0}^{R} r dr \int_{\lambda_{i}}^{\rho_{i}} d\phi \int_{0}^{\infty} r_{0} dr_{0} \int_{\lambda_{i}}^{\rho_{i}} d\phi_{0} \; G_{W_{\gamma}}(s;r,\phi,r_{0},\phi_{0}) \right\rbrace(t) \\
        & \qquad \quad + \sum_{i=1}^{k}\sum_{\substack{1\leq j \leq k \\ j\neq i}}\mathcal{L}^{-1} \left\lbrace \int_{0}^{R} r dr \int_{\lambda_{i}}^{\rho_{i}} d\phi \int_{0}^{\infty} r_{0} dr_{0} \int_{\lambda_{j}}^{\rho_{j}} d\phi_{0} \; G_{W_{\gamma}}(s;r,\phi,r_{0},\phi_{0}) \right\rbrace(t).
    \end{split}
\end{equation} For $0\leq \lambda < \rho \leq \sigma$, we define \begin{equation}
\hat{d}(\sigma,\rho,\lambda) := \begin{cases}
0, & \lambda = 0, \rho = \sigma \\
\hat{b}(\sigma,\rho) & \lambda = 0,\rho < \sigma \\
\hat{b}(\sigma,\sigma - \lambda) & \lambda > 0,\rho = \sigma \\
\hat{c}(\sigma,\rho,\lambda) & \lambda > 0,\rho < \sigma \\
\end{cases},
\end{equation} where $\hat{b}$ and $\hat{c}$ are defined in the statements of Theorems \ref{NON_wedge} and \ref{NOON_wedge} respectively. Then, from Lemma \ref{lem:NN} and Theorems \ref{NON_wedge} and \ref{NOON_wedge}, we know that \begin{equation}\label{eqn:diag_terms}
\begin{split}
& \int_{0}^{R} r dr \int_{\lambda}^{\rho}d\phi \int_{0}^{\infty} r_{0} dr_{0} \int_{\lambda}^{\rho}d\phi_{0} \; \eta_{W_{\sigma}}(t;r,\phi,r_{0},\phi_{0}) \\
& \qquad \qquad = \frac{1}{2}(\rho-\lambda)R^{2} - \frac{(2-\mathds{1}_{\lbrace 0\rbrace}(\lambda)-\mathds{1}_{\lbrace \sigma\rbrace}(\rho))R}{\pi^{1/2}}t^{1/2} + \hat{d}(\sigma,\rho,\lambda)t \\
& \qquad \qquad \quad + (2-\mathds{1}_{\lbrace 0\rbrace}(\lambda)-\mathds{1}_{\lbrace \sigma\rbrace}(\rho)) \frac{R}{\pi^{1/2}}t^{1/2}\int_{1}^{\infty}\frac{dv}{v^{2}}\int_{0}^{1}dy\frac{y}{(1-y^{2})^{1/2}} e^{-R^{2}y^{2}v^{2}/4t} + O(e^{-C/t}),
\end{split}
\end{equation} from some constant $C>0$ depending on $R$, $\sigma$, $\rho$, and $\lambda$ (note that when $\lambda=0$ and $\rho=\sigma$ we consider a Neumann vertex so there is no remainder as in Lemma \ref{lem:NN}).

Suppose W.L.O.G. that $0\leq \lambda < \rho < \lambda' < \rho'\leq \sigma$. By Fubini's theorem we observe that \begin{equation}
\begin{split}
    \int_{\lambda}^{\rho} d\phi \int_{\lambda'}^{\rho'}d\phi_{0} \; \Phi_{\sigma}(\theta,\phi,\phi_{0}) & = \int_{0}^{\rho}d\phi\int_{0}^{\rho'}d\phi_{0} \; \Phi_{\sigma}(\theta,\phi,\phi_{0}) + \int_{0}^{\lambda}d\phi \int_{0}^{\lambda'}d\phi_{0} \; \Phi_{\sigma}(\theta,\phi,\phi_{0}) \\
    & \quad - \int_{0}^{\lambda}d\phi\int_{0}^{\rho'}d\phi_{0} \; \Phi_{\sigma}(\theta,\phi,\phi_{0}) - \int_{0}^{\rho}d\phi \int_{0}^{\lambda'}d\phi_{0} \; \Phi_{\sigma}(\theta,\phi,\phi_{0}).
\end{split}
\end{equation} Using \eqref{eqn:NOON_Sum} on each of these four double integrals, we see that \begin{equation}
\begin{split}
    \int_{\lambda}^{\rho} d\phi \int_{\lambda'}^{\rho'}d\phi_{0} \; \Phi_{\sigma}(\theta,\phi,\phi_{0}) & = \frac{\sinh(\pi\theta)}{\theta^{2}\sinh(\sigma \theta)} \big( \cosh((\sigma-\rho'-\rho)\theta) - \cosh((\sigma-\rho'+\rho)\theta) \\
    & \; \qquad \qquad \qquad + \cosh((\sigma-\lambda'-\lambda)\theta) - \cosh((\sigma-\lambda'+\lambda)\theta) \\
    & \; \qquad \qquad \qquad - \cosh((\sigma-\rho'-\lambda)\theta) + \cosh((\sigma-\rho'+\lambda)\theta) \\
    & \; \qquad \qquad \qquad - \cosh((\sigma-\lambda'-\rho)\theta) + \cosh((\sigma-\lambda'+\rho)\theta) \big) \\
    & =: \frac{\sinh(\pi\theta)}{\theta^{2}\sinh(\sigma \theta)} \hat{g}(\theta;\sigma,\rho,\lambda,\rho',\lambda').
    \end{split}
\end{equation} Using similar arguments as in the proof of Theorem \ref{NOON_wedge} and repeated use of Lemma \ref{angle_remainder}, one can deduce that

\begin{equation}\label{eqn:corrective_terms}
\begin{split}
    & \mathcal{L}^{-1} \left\lbrace \int_{0}^{R} r dr \int_{\lambda}^{\rho} d\phi \int_{0}^{\infty} r_{0} dr_{0} \int_{\lambda'}^{\rho'} d\phi_{0} \; G_{W_{\gamma}}(s;r,\phi,r_{0},\phi_{0}) \right\rbrace(t) \\
    & \qquad \qquad \qquad = \int_{0}^{\infty}d\theta \; \frac{\cosh(\frac{\pi}{2}\theta)\hat{g}(\theta;\sigma,\rho,\lambda,\rho',\lambda')}{2\sinh(\sigma\theta)\sinh(\frac{\pi}{2}\theta)} + O(e^{-C'/t}) \\
    & \qquad \qquad \qquad =: \hat{h}(\sigma,\rho,\lambda,\rho',\lambda')+ O(e^{-C'/t}),
\end{split}
\end{equation} where $C' >0$ is a constant depending on $R$ and the angles $\sigma,\lambda,\rho,\lambda',\rho'$. Using \eqref{eqn:diag_terms} and \eqref{eqn:corrective_terms}, we see that the model heat content contribution of $\widetilde{S}_{v}(R)$ is \begin{equation}
\begin{split}
    |\widetilde{S}_{v}(R)| - \frac{(2k-\mathds{1}_{\lbrace 0 \rbrace}(\lambda_{1})-\mathds{1}_{\lbrace \sigma \rbrace}(\rho_{k}))R}{\pi^{1/2}}t^{1/2} + \left( \sum_{i=1}^{k}\hat{d}(\sigma,\rho_{i},\lambda_{i})+ \sum_{i=1}^{k}\sum_{\substack{1\leq j \leq k \\ j\neq i}} \hat{h}(\sigma, \rho_{i},\lambda_{i},\rho_{j},\lambda_{j})\right)t \\
    + (2k-\mathds{1}_{\lbrace 0\rbrace}(\lambda_{1})-\mathds{1}_{\lbrace \sigma\rbrace}(\rho_{k})) \frac{R}{\pi^{1/2}}t^{1/2}\int_{1}^{\infty}\frac{dv}{v^{2}}\int_{0}^{1}dy\frac{y}{(1-y^{2})^{1/2}} e^{-R^{2}y^{2}v^{2}/4t} + O(e^{-C''/t})
    \end{split}
\end{equation} for some constant $C''>0$ depending on $R$ and the angles $\sigma,\lambda_{1},\rho_{1},\ldots, \lambda_{k},\rho_{k}$.

To extend Theorem \ref{thm:main} to include vertices with an arbitrary number of incident edges in $\widetilde{E}$, one would need to construct a partition analogously to the construction given in Section 2. We remark that we believe these ideas can be used for such wedges where instead of Neumann boundary conditions we have Dirichlet boundary conditions or mixed Dirichlet-Neumann boundary conditions. This would give rise to analogues of Theorem \ref{thm:main} in the Dirichlet-Open-Open-Dirichlet (DOOD) case, the Dirichlet-Open-Open-Neumann (DOON) case, and so on.

\section{Vertices with interior angle $2\pi$ and unbounded domains}

Theorem \ref{thm:main} readily extends to include vertices of $D$ with interior angle $2\pi$ as we shall now describe. To make sense of this we first need to consider a generalised Neumann boundary condition in the spirit of \cite{C92}. Consider the manifold metric \begin{equation}\label{eqn:manifold_metric}
    \widehat{d}(x,y) := \inf\lbrace \ell(\gamma) : \text{$\gamma$ is piecewise $C^{1}$ from $x$ to $y$}\rbrace
\end{equation} on $D$ where $\ell(\gamma)$ denotes the length of $\gamma$. The topology generated on $D$ by this metric is precisely the Euclidean topology however the completion $\widehat{D}$ of $D$ in this metric is not a subset of $\mathbb{R}^{2}$, rather an abstract space. We have a new boundary $\partial \widehat{D} := \widehat{D}\setminus D$ for which the normal derivative $\widehat{n}$ on the boundary can be defined almost everywhere in the obvious way. The heat equation with Neumann boundary conditions then becomes \begin{equation}
    \begin{cases}
    \displaystyle \frac{\partial u}{\partial t}(t;x) = \Delta u(t;x), & t>0, \; x\in D, \vspace{1em}\\
    \displaystyle \frac{\partial u}{\partial \widehat{n}}(t;x) = 0, & t>0, \; x\in \partial \widehat{D}\text{ a.e.}\, .
    \end{cases}
\end{equation} Existence of a solution to this problem comes from the existence of a Neumann heat kernel using a simple adaptation of the manifold construction by reflection discussed in Section 4 following \cite[\S 3]{GM72}. Uniqueness of the solution to the heat equation can be obtained via the classical energy method using a generalised Green's formula, see \cite[Thm 4.5]{C93}, in light of $\widehat{D}$ being a pseudo Jordan domain, see \cite{C92}. Alternatively, one can use a suitable generalisation of the maximum principle to prove this.

For a vertex with interior angle $2\pi$ the comparison to make is with the $2\pi$ wedge $W_{2\pi}:= \lbrace (r,\phi): r > 0, 0 < \phi < 2\pi\rbrace$. We impose the generalised Neumann boundary condition on $\partial \widehat{W}_{2\pi}$. Existence of a Neumann heat kernel in this case again comes from an adaptation of the methods in \cite[\S 3]{GM72}, or indeed by observation comparing with the Neumann heat kernel for the wedge with angle $<2\pi$, i.e. the inverse Laplace transform of \begin{equation}
    G_{W_{2\pi}}(s;r,\phi,r_{0},\phi_{0}) = \frac{1}{\pi^{2}} \int_{0}^{\infty} d\theta \; K_{i\theta}(r\sqrt{s}) K_{i\theta}(r_{0}\sqrt{s}) \Phi_{2\pi}(\theta,\phi,\phi_{0}),
\end{equation} using the notation from Section 3. For model computations with this Green's function, the results of Section 3 and Appendix A readily extend to this case and the comparisons work in the exact same way as in Section 4. The only point left to prove is the uniqueness of the Neumann heat kernel for the $2\pi$ wedge. We do not do this directly here and leave an outline of the idea in Remark \ref{rem:2pi_wedge}.

Theorem \ref{thm:main} also extends to the case of polygonal domains whose boundary is a collection of Neumann edges and open edges.
Consider a polygonal domain $D$ with each interior angle strictly less than $2\pi$. Now pick a sub-collection $\partial D_{+} \subsetneq \partial D$ of edges of $D$ and define a new domain $\Omega := \mathbb{R}^{2}\setminus \partial D_{+}$. In what follows, $\Omega$ will be a domain of this form and $\widehat{\Omega}$ will denote its closure with respect to the manifold metric. On $\Omega$ we can consider the (generalised) Neumann heat equation with initial datum $f\in L^{\infty}(\Omega)$, i.e.

\begin{equation}\label{eqn:gen_Neu_heat_eqn}
    \begin{cases}
    \displaystyle \frac{\partial u}{\partial t}(t;x) = \Delta u(t;x), & t>0, \, x\in \Omega, \vspace{1em}\\
    \displaystyle \frac{\partial u}{\partial \widehat{n}}(t;x) = 0, & t>0, \, x\in \partial \widehat{\Omega}\text{ a.e.}, \vspace{1em}\\
    \lim_{t\downarrow 0}u(t;x) = f(x), & x\in \Omega.
    \end{cases}
\end{equation}

Via a simple adaptation of the method in \cite{GM72}, one can construct a suitable generalised Neumann heat kernel on $\Omega$ from the fundamental solution to the heat equation on a manifold constructed from reflected copies of $\Omega$, and so there exists a bounded solution to \eqref{eqn:gen_Neu_heat_eqn}. In fact, such a solution is unique, see Proposition \ref{prop:generalised_neumann_uniq}. In particular, we can take $f=\mathds{1}_{\widetilde{D}}$ for some polygonal subdomain $\widetilde{D}\subset D \subset \Omega$ and consider the heat content of $\widetilde{D}$ and obtain a small-time asymptotic formula of the form in Theorem \ref{thm:main} in this case. The comparisons work on the manifold level as in Section 4, which justifies the local approximations of the heat content in small-time.

Finally, we prove the uniqueness of a bounded solution to \eqref{eqn:gen_Neu_heat_eqn}.

\begin{prop}\label{prop:generalised_neumann_uniq} Let $\Omega = \mathbb{R}^{2}\setminus \partial D_{+}$ as above. For $f\in L^{\infty}(\Omega)$, there exists a unique bounded solution to \eqref{eqn:gen_Neu_heat_eqn}.
\end{prop}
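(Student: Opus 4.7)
The plan is to reduce uniqueness to showing that a bounded classical solution of \eqref{eqn:gen_Neu_heat_eqn} with zero initial datum vanishes identically, and then to deduce this by lifting to the flat Riemannian manifold generated by $\Omega$ via the Gallavotti--McKean construction of Section 4 and applying a martingale uniqueness argument.

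Let $u_{1},u_{2}$ be two bounded solutions with common initial datum $f\in L^{\infty}(\Omega)$ and set $v := u_{1}-u_{2}$. Then $v$ is bounded, satisfies $\partial_{t}v=\Delta v$ in $\Omega$ together with the generalised Neumann condition on $\partial\widehat{\Omega}$ almost everywhere, and $\lim_{t\downarrow 0}v(t;x)=0$ pointwise. Following Section 4, let $M$ denote the flat Riemannian manifold generated by $\Omega$ with the edges of $\partial D_{+}$ as the reflecting edges, with the discrete set $\Psi_{M}^{-1}(V)$ of vertex preimages removed, and write $\Psi_{M}:M\to\Omega^{*}$ for the natural projection. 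Setting $\widetilde v:=v\circ\Psi_{M}$, the generalised Neumann condition is precisely the compatibility needed for $\widetilde v$ to extend smoothly across preimages of the interiors of edges in $\partial D_{+}$, so that $\widetilde v$ is a bounded classical solution of $\partial_{t}\widetilde v=\Delta\widetilde v$ on $M$ with pointwise zero initial datum.

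Since $\Omega$ differs from $\mathbb{R}^{2}$ by only finitely many line segments, any metric ball $B_{r}(y_{0})\subset M$ meets only polynomially many reflected sheets, each of area at most $\pi r^{2}$, so $M$ is a flat manifold of polynomial volume growth. By Grigor'yan's criterion, Brownian motion $B$ on $M$ is stochastically complete with infinite lifetime, the lifted heat kernel $\widetilde\eta_{M}$ satisfies the conservation identity $\int_{M}\widetilde\eta_{M}(s;x,y)\,dy=1$, and $\Psi_{M}^{-1}(V)$ is polar. For fixed $t>0$ and $x\in M$, It\^{o}'s formula shows that $s\mapsto\widetilde v(t-s;B_{s})$ is a bounded local martingale, hence a true martingale, on $[0,t)$, yielding
\begin{equation}
\widetilde v(t;x) \;=\; \int_{M}\widetilde\eta_{M}(s;x,y)\,\widetilde v(t-s;y)\,dy \qquad\text{for every }s\in(0,t).
\end{equation}
As $s\uparrow t$, the integrand tends pointwise to $\widetilde\eta_{M}(t;x,y)\cdot 0 = 0$ and is bounded by $\|\widetilde v\|_{\infty}\widetilde\eta_{M}(s;x,y)$, whose $y$-integral equals $\|\widetilde v\|_{\infty}$ independently of $s$. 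The generalised dominated convergence theorem then forces $\widetilde v(t;x)=0$, and projecting back via $\Psi_{M}$ gives $v\equiv 0$ on $(0,\infty)\times\Omega$, as desired.

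The main technical obstacle is simultaneously controlling the It\^{o} step on $M$ and the limit $s\uparrow t$ at the low regularity provided by the hypotheses. The It\^{o} computation requires $\widetilde v$ to be $C^{1,2}$ on $M\setminus\Psi_{M}^{-1}(V)$, which follows from interior parabolic regularity for bounded weak solutions together with the fact that $B$ almost surely avoids the polar vertex preimages; the passage $s\uparrow t$ relies solely on pointwise decay of $\widetilde v$ to zero combined with conservation $\int\widetilde\eta_{M}(s;x,\cdot)\,dy=1$ from stochastic completeness, so no uniform continuity of $\widetilde v$ down to $t=0$ is required. An essentially equivalent alternative, in the spirit of the classical maximum principle approach alluded to in the paper, is to invoke Grigor'yan's uniqueness theorem for the heat equation directly on $M$, according to which polynomial volume growth already forces uniqueness of bounded solutions.
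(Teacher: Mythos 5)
Your approach is genuinely different from the paper's. The paper proves uniqueness by working directly on $\Omega$: after establishing a density result (Lemma \ref{lem:regular_space}) and two further regularity observations, it invokes Chen's result from [C93] to obtain a Skorokhod decomposition of reflecting Brownian motion on $\widehat{\Omega}$ (Lemma \ref{lem:skorokhod}), and then concludes via It\^{o}'s formula as in the proof of [GM72, Thm.\ 3.6]. You instead lift $v$ to the flat manifold $M$ generated by $\Omega$ and try to obtain uniqueness on $M$ through stochastic completeness via a volume growth criterion.

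The step that fails is the volume growth claim, and with it the appeal to Grigor'yan's criterion. Near each vertex preimage $p\in\Psi_{M}^{-1}(V)$, infinitely many reflected sheets accumulate, so that locally $M$ is isometric to the punctured surface $\{(\rho,\theta):0<\rho<\delta,\ \theta\in\mathbb{R}\}$ with the flat metric $d\rho^{2}+\rho^{2}\,d\theta^{2}$ (the Riemann surface of $\log$, with the cone point removed). Winding is arbitrarily cheap close to $p$: the infimum of lengths of paths from a fixed $y_{0}$ to $(\rho,\theta)$ is approximately $d(y_{0},p)+\rho$, uniformly in $\theta$. Hence any metric ball $B_{r}(y_{0})$ with $r>d(y_{0},p)$ contains an entire punctured neighbourhood $\{0<\rho<r-d(y_{0},p),\ \theta\in\mathbb{R}\}$, which has infinite Riemannian area; thus $V(r)=\infty$ as soon as the ball reaches a removed vertex, and the ball meets \emph{infinitely} many sheets, not polynomially many. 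In addition $M$ is geodesically incomplete (the vertex preimages are excised), while the standard form of Grigor'yan's criterion is stated for geodesically complete manifolds. So neither polynomial volume growth nor the applicability of the volume test holds. The desired conclusions — that Brownian motion on $M$ is conservative, avoids $\Psi_{M}^{-1}(V)$ almost surely, and hence that the representation $\widetilde v(t;x)=\int_{M}\widetilde\eta_{M}(s;x,y)\widetilde v(t-s;y)\,dy$ is valid — may well all be true, but each needs an argument that does not go through volume growth (for instance, by analysing the radial part of the lifted process near a vertex, or by avoiding the lift entirely, as the paper does by working with the Skorokhod decomposition on $\widehat\Omega$). As a smaller point, polarity of $\Psi_{M}^{-1}(V)$ is a capacity statement and would not follow from a volume growth criterion in any case.
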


To prove Proposition \ref{prop:generalised_neumann_uniq}, we employ a result from \cite{C93} (in particular, page 307). This result relies on three properties. We address the first one in Lemma \ref{lem:regular_space}.

\begin{lemma}\label{lem:regular_space}
We have that $C_{0}(\widehat{\Omega})\cap H^{1}(\Omega)$ is dense in $H^{1}(\Omega)$ with respect to the $H^{1}$-norm and is dense in $C_{0}(\widehat{\Omega})$ with respect to the uniform norm.
\end{lemma}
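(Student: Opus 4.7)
The plan is to prove both density statements by a three-step procedure: truncate to manifold-compact support, cut off a neighborhood of the (finite) vertex set $V$ of $\partial D_{+}$, and mollify using the flat Riemannian structure of $\widehat{\Omega}\setminus V$ (equivalently, on the manifold $M$ constructed as in Section 4).

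First I would handle \emph{truncation}. The identity map $(\Omega, \widehat{d}) \to (\Omega, |\cdot|)$ is $1$-Lipschitz, so every bounded Lipschitz function on $\mathbb{R}^{2}$ restricts to an element of $C(\widehat{\Omega})$. Picking a standard radial cutoff $\eta_{R} \in C_{c}^{\infty}(\mathbb{R}^{2})$ with $\eta_{R} \equiv 1$ on $B_{R}$ and $\|\nabla \eta_{R}\|_{\infty} \leq C/R$, dominated convergence gives $\eta_{R} f \to f$ in $H^{1}(\Omega)$ for $f \in H^{1}(\Omega)$, while for $f \in C_{0}(\widehat{\Omega})$ the fact that $\widehat{\Omega}$ is locally compact Hausdorff ensures $C_{c}(\widehat{\Omega})$ is dense in $C_{0}(\widehat{\Omega})$. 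Thus one may assume $f$ has compact support in $\widehat{\Omega}$.

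Next, I would \emph{cut off} a neighborhood of the finite set $V$. In dimension two each point has vanishing Sobolev $2$-capacity: explicit continuous radial logarithmic cutoffs $\psi_{\varepsilon}$, equal to $1$ on a neighborhood of $V$, supported in an $\varepsilon$-neighborhood, and satisfying $\|\psi_{\varepsilon}\|_{H^{1}(\mathbb{R}^{2})} \to 0$ as $\varepsilon \to 0$, do the job. Then $(1-\psi_{\varepsilon}) f \to f$ in $H^{1}(\Omega)$, and for the uniform-norm claim the continuity of $f$ at the vertices (automatic for $f \in C(\widehat{\Omega})$) also gives $(1-\psi_{\varepsilon}) f \to f$ uniformly. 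Hence one reduces to the case $\operatorname{supp}(f) \Subset \widehat{\Omega} \setminus V$.

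Finally, I would \emph{mollify}. Lift $f$ to the covering manifold $M$ of Section 4; on $M \setminus \Psi_{M}^{-1}(V)$ the projection $\Psi_{M}$ is a local isometry, so near any point of the lifted support there is an isometric chart to a Euclidean ball. Using a finite partition of unity subordinate to such charts, standard Euclidean mollification with parameter $\delta$ smaller than both the injectivity radius of the cover and $\operatorname{dist}(\operatorname{supp}(f),V)$ yields smooth, compactly supported functions $f_{\delta}$ on $\widehat{\Omega}$ with $f_{\delta} \to f$ uniformly (when $f \in C_{0}(\widehat{\Omega})$) and in $H^{1}(\Omega)$ (when $f \in H^{1}(\Omega)$); such $f_{\delta}$ lie in $C_{0}(\widehat{\Omega}) \cap H^{1}(\Omega)$, which establishes both claims. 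The main subtlety is ensuring the chart-wise mollification on $M$ descends unambiguously to $\widehat{\Omega}$, which reduces to the mollifier radius being smaller than the local separation of sheets of the cover at each point of $\operatorname{supp}(f)$; this is automatic because the support is compact and bounded away from $V$, where the sheet structure is locally finite.
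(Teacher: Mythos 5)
Your proposal takes a genuinely different route from the paper. The paper's proof uses a partition of unity subordinate to a finite cover of $\widehat{\Omega}$ (one set away from $\partial\widehat{\Omega}$, boundary sets along the interiors of the slit edges, and small balls at the vertices in $V_{+}$) and then cites three local density results — the Euclidean case for the interior piece, the Lipschitz/polygonal case for boundary pieces, and the pseudo Jordan domain density result of \cite{C92} for the vertex balls. You instead avoid the vertex analysis altogether by exploiting that isolated points have zero $H^{1}$-capacity in dimension two, and then handle the remaining boundary by even reflection and mollification through the manifold $M$. This is a legitimate and arguably more self-contained strategy, but there are two real gaps in the execution.

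First, in your vertex-cutoff step the claim that $(1-\psi_{\varepsilon})f\to f$ in $H^{1}(\Omega)$ for a general $f\in H^{1}(\Omega)$ does not follow directly. Writing $\nabla\bigl(\psi_{\varepsilon}f\bigr)=\psi_{\varepsilon}\nabla f + f\nabla\psi_{\varepsilon}$, the term $f\nabla\psi_{\varepsilon}$ is controlled by $\|f\|_{L^{\infty}}\|\nabla\psi_{\varepsilon}\|_{L^{2}}$ only when $f$ is bounded, and in two dimensions $H^{1}\not\subset L^{\infty}$. The standard fix is to insert a range truncation $f\mapsto\max(-M,\min(M,f))$ before cutting off near $V$; you carry out a spatial truncation but not a range truncation, so this step as written is incomplete (though easily repaired).

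Second, and more seriously, the uniform-norm part of the vertex cutoff is simply false as stated. Since $\psi_{\varepsilon}(v)=1$ we have $\bigl\|(1-\psi_{\varepsilon})f-f\bigr\|_{\infty}=\|\psi_{\varepsilon}f\|_{\infty}\geq |f(v)|$, which does not tend to $0$ unless $f$ happens to vanish at $v$; functions in $C_{0}(\widehat{\Omega})$ need not vanish on $\partial\widehat{\Omega}$, so this breaks the proof of density in $C_{0}(\widehat{\Omega})$. To salvage the argument near a vertex one should replace $f$ by $(1-\psi_{\varepsilon})f+\psi_{\varepsilon}f(v)$, i.e. interpolate to the constant value $f(v)$ rather than to $0$; uniform continuity of $f$ at $v$ then gives a small uniform error, and since the modified function is constant near $v$ it is locally in $H^{1}$ there after mollification. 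A smaller remark on the final step: $\Psi_{M}$ is not a local isometry at points projecting to the interior of a slit edge — there it folds a Euclidean ball in $M$ onto a half-ball in $\widehat{\Omega}$ — but since the lift $f\circ\Psi_{M}$ is precisely the even reflection of $f$ across the boundary, your mollification argument still goes through once this is phrased as extension by reflection followed by standard mollification.
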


\begin{figure}
    \centering
    \begin{tikzpicture}
    \draw[red, very thick] (0,0) -- (2,0) -- (4,2);
    \draw[dashed,thick,blue] (0,0) circle (1.5);
    \draw[dashed,thick,blue] (4,2) circle (1.5);
    \draw[dashed,rounded corners=5mm,thick,orange] (0.5,0) -- (0.5,1.5) -- (2,1.5) -- (2.5,2.5) -- (3.5,1.5);
    \draw[dashed,rounded corners=5mm,thick,orange] (0.75,0) -- (0.75,-1.5) -- (2,-1.5) -- (4.75,0.75) -- (3.75,1.75);
    \draw[dashed,rounded corners=5mm,thick] (-0.75,0) -- (-0.75,1) -- (2,1) -- (4,3) -- (5,2) -- (2,-1) -- (-0.75,-1) -- cycle;
    \node at (6,0) {$U_{1}$};
    \node at (0,-0.5) {\color{blue} $U_{2}$};
    \node at (4,3.2) {\color{blue} $U_{3}$};
    \node at (2,0.5) {\color{orange} $U_{4}$};
    \node at (2,-0.5) {\color{orange} $U_{5}$};
    \node at (4,2.25) {\color{red} $\partial D_{+}$};
    \end{tikzpicture}
    \caption{An illustration an open cover $U_{1},\ldots, U_{5}$ of the case when $\partial D_{+}$ is comprised of two edges. $U_{1}$ is as described in the proof of Lemma \ref{lem:regular_space}, $U_{2}$ and $U_{3}$ are vertex sets in this cover and $U_{4}$ and $U_{5}$ are boundary sets.}
    \label{fig:regular_covering}
\end{figure}
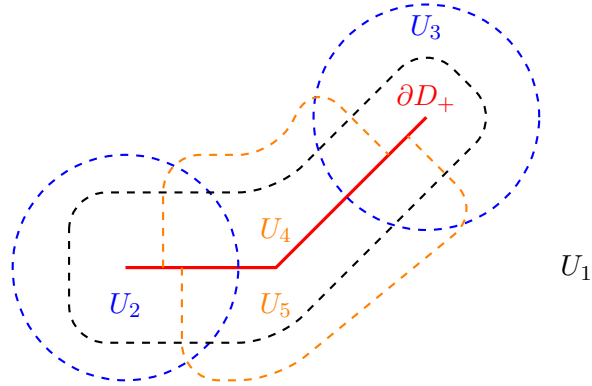

\begin{proof}
Take a finite cover $\lbrace U_{i}\rbrace$ of $\widehat{\Omega}$ by open sets in the following way. Let $V_{+}$ denote the collection of endpoints of connected components of $\partial D_{+}$, note that for each $v\in V_{+}$, there is a unique corresponding $\overline{v} \in V_{+}$ such that there is a path through $\partial D_{+}$ to $\overline{v}$. We call $\overline{v}$ the connected partner of $v$. Fix $r>0$ such that for all $v_{1},v_{2}\in V_{+}$, $B_{r}(v_{1}) \cap B_{r}(v_{2}) = \emptyset$. These become our first collection of open sets of $\widehat{\Omega}$, which we shall call vertex sets in our cover. Then along each side of the boundary between a given vertex $v\in V_{+}$ and its connected partner $\overline{v}$, pick two disjoint bounded open sets $U_{1}$ and $U_{2}$ on each side of the boundary such that $B_{r}(v)\cap U_{j}, B_{r}(\overline{v})\cap U_{j} \neq \emptyset$, $\widehat{d}(V_{+},U_{j}) > 0$ and are at a positive distance from any other part of the boundary of $\partial \widehat{\Omega}$. These we call boundary sets and form our next part of the open cover. Finally, choose an open set $U$ that covers the rest of $\widehat{\Omega}$ and lies at a positive distance from $\partial \widehat{\Omega}$, we denote this set by $U_{1}$. Now we have our open cover $\lbrace U_{i}\rbrace$, we take $\lbrace \chi_{i} \rbrace$ to be a partition of unity subordinate to this open cover. See Figure \ref{fig:regular_covering} for a visualisation of such an open cover for an example case where $\partial D_{+}$ is comprised of two edges.

Pick a function $u\in H^{1}(\Omega)$. Then we can view $\chi_{1}u$ as lying in $H^{1}(\mathbb{R}^{2})$ and so we can find a sequence $u_{n}^{(1)} \in H^{1}(\mathbb{R}^{2})\cap C_{0}(\mathbb{R}^{2})$ such that $u_{n}^{(1)} \to \chi_{1}u$ in the $H^{1}$-norm as $n\to\infty$. In particular, we can choose the $u_{n}^{(1)}$ such that their supports are contained $U_{1}$. For a boundary set $U_{i}$, $i\neq 1$, we can view $U_{i}$ as a subset of a bounded polygonal domain $D'$ with interior angles $<2\pi$. Then we can view $\chi_{i}u$ as lying in $H^{1}(D')$ and so we can find a sequence of functions $u_{n}^{(i)} \in H^{1}(D')\cap C_{0}(D')$ converging to $\chi_{i}u$ in the $H^{1}$-norm and moreover we can choose the $u_{n}^{(i)}$ such that their supports are contained in $U_{i}$. Finally for $U_{i} = B_{r}(v)$ a vertex set in our cover, we can view $U_{i}$ as the ball of radius $r$ centred at the origin with the slit $[0,r) \times \lbrace 0\rbrace$ removed. Then we know from \cite{C92} that this is a pseudo Jordan domain and that there exist functions $u_{n}^{(i)} \in H^{1}(U_{i})\cap C_{0}(U_{i})$ converging to $\chi_{i}u$ in the $H^{1}$-norm and again we can choose the $u_{n}^{(i)}$ such that their supports are contained in $U_{i}$.

Since our open cover was finite, putting this all together we see that \begin{equation}
    H^{1}(\Omega)\cap C_{0}(\widehat{\Omega}) \ni \sum_{i} u_{n}^{(i)} \to \sum_{i} \chi_{i} u = u \in H^{1}(\Omega)
\end{equation} as $n\to \infty$. Since $u$ was arbitrary, we see that $H^{1}(\Omega)\cap C_{0}(\widehat{\Omega})$ is dense in $H^{1}(\Omega)$ in the $H^{1}$-norm. The fact that $H^{1}(\Omega)\cap C_{0}(\widehat{\Omega})$ is dense in $C_{0}(\widehat{\Omega})$ with respect to the uniform norm can be deduced in essentially the same way.
\end{proof}

The remaining two required properties of $\Omega$ follow by observation:
\begin{itemize}
    \item There exists a regular exhaustion of $(\Omega_{n})_{n \geq 1}$ of $\Omega$, that is a collection of smooth bounded connected domains with $\overline{\Omega}_{n} \subset \Omega_{n+1}$ and $\bigcup_{n\geq 1} \Omega_{n} = \Omega$, such that \begin{equation}
    \sup_{n} |(\partial \Omega_{n})\cap B_{r}(0)| < \infty, \enspace \forall r>0.
\end{equation}
\item For each $y\in \widehat{\Omega}$ and $i=1,2$, there exists an open neighbourhood $U$ of $y$ and $f\in H^{1}(\Omega)$ such that $f(x) = x_{i}$ for all $x\in U\cap \Omega$.
\end{itemize}

With these  two properties and the result from Lemma \ref{lem:regular_space} in hand, the following result from \cite{C93} holds.

\begin{lemma}\label{lem:skorokhod}
Let $\Omega = \mathbb{R}^{2}\setminus \partial D_{+}$ as above.
There exists a process $\widehat{X}$ on $\widehat{\Omega}$ such that under the obvious quasi-continuous inclusion map $\iota : \widehat{\Omega} \to \overline{\Omega}$, we see that $X := \iota \circ \widehat{X}$ has the following Skorokhod decomposition \begin{equation}
   X_{t} = X_{0}+B_{t} + \int_{0}^{t} \widehat{n}(\widehat{X}_{s})\, dL_{s}, \quad t \geq 0,
\end{equation} where $B$ is a 2-dimensional Brownian motion martingale additive functional of $\widehat{X}$ and $L$ is a positive continuous additive functional associated with the surface measure of $\widehat{\Omega}$.
\end{lemma}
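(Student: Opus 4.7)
The plan is to invoke the general construction of \cite[p.~307]{C93}, which associates to a domain $\Omega$ a reflecting diffusion process on its manifold completion $\widehat{\Omega}$ together with an explicit Skorokhod decomposition, provided three structural conditions are satisfied. Thus the entire proof reduces to verifying these three conditions for our $\Omega = \mathbb{R}^{2}\setminus \partial D_{+}$, after which the existence of $\widehat{X}$, $B$ and $L$ with the stated decomposition follows immediately from the cited result.

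The first condition, namely the density of $C_{0}(\widehat{\Omega})\cap H^{1}(\Omega)$ both in $H^{1}(\Omega)$ under the Sobolev norm and in $C_{0}(\widehat{\Omega})$ under the uniform norm, is precisely the content of Lemma \ref{lem:regular_space}, so nothing further is required there. For the second condition, I would construct the regular exhaustion $(\Omega_{n})_{n\geq 1}$ by taking, for each $n$, a smoothed version of the set $\{ x \in \Omega : |x| < n,\ \widehat{d}(x,\partial \widehat{\Omega}) > 1/n\}$, where the smoothing is carried out using a standard mollifier applied to a signed distance function so that each $\Omega_{n}$ is a smooth bounded connected domain with $\overline{\Omega}_{n} \subset \Omega_{n+1}$. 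The uniform bound $\sup_{n}|(\partial\Omega_{n})\cap B_{r}(0)| < \infty$ for every $r>0$ then follows from the fact that $\partial D_{+}$ consists of finitely many line segments of bounded total length and the mollified level sets of the distance function have one-dimensional measure uniformly controlled by that of $\partial D_{+} \cup \partial B_{n}(0)$ on any compact ball.

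For the third condition, I would show that for each $y \in \widehat{\Omega}$ and each $i\in\{1,2\}$ there is an open neighbourhood $U$ of $y$ and a function $f \in H^{1}(\Omega)$ with $f(x)=x_{i}$ for all $x\in U\cap\Omega$. When $y$ is an interior point of $\Omega$, or lies on the interior of an edge of $\partial D_{+}$ (regarded on either of its two sides in $\widehat{\Omega}$), I would simply take $f=\chi x_{i}$ where $\chi$ is a smooth compactly supported cutoff equal to $1$ near $y$; such $f$ lies in $H^{1}(\mathbb{R}^{2})\subset H^{1}(\Omega)$. The subtle case is when $y$ lies over an endpoint $v \in V_{+}$ of a connected component of $\partial D_{+}$, because a neighbourhood of $y$ in $\widehat{\Omega}$ corresponds to a slit disc. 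Here I would appeal to the fact, used already in the proof of Lemma \ref{lem:regular_space} via \cite{C92}, that such a slit disc is a pseudo-Jordan domain whose $H^{1}$ space contains the restrictions of the coordinate functions; multiplication by a cutoff supported in $U$ then produces the required $f \in H^{1}(\Omega)$.

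With all three conditions checked, the main theorem of \cite[\S 2--3]{C93} produces a Hunt process $\widehat{X}$ on $\widehat{\Omega}$ associated with the Dirichlet form $(\tfrac{1}{2}\mathcal{E},H^{1}(\Omega))$. Applying the Fukushima decomposition to the coordinate functions, whose membership in $H^{1}_{\mathrm{loc}}$ is precisely what the third condition guarantees, yields $X_{t} = X_{0} + B_{t} + \int_{0}^{t}\widehat{n}(\widehat{X}_{s})\,dL_{s}$, with $B$ the martingale additive functional identified as a planar Brownian motion and $L$ the positive continuous additive functional whose Revuz measure is the surface measure on $\partial\widehat{\Omega}$. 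I anticipate that the main obstacle is the endpoint case in verifying the third condition, since it is there that the non-Euclidean geometry of $\widehat{\Omega}$ first intervenes; everything else is routine once the pseudo-Jordan structure at the slit tips is handled as above.
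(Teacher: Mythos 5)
Your proposal is correct and follows essentially the same route as the paper: both invoke Chen's construction from \cite{C93} (page 307) and reduce the lemma to verifying the three structural hypotheses, with the first supplied by Lemma \ref{lem:regular_space} and the remaining two (the regular exhaustion and the local presence of coordinate functions in $H^1$) dismissed in the paper as "following by observation." Your elaboration of these two conditions — the mollified level-set exhaustion and the cutoff-times-coordinate construction, with the slit-tip case handled via the pseudo-Jordan structure from \cite{C92} — is a reasonable and accurate filling-in of what the paper leaves implicit, though note the paper's Laplacian convention ($-\Delta+\partial_t$, i.e.\ Brownian motion at twice the standard speed) means the Dirichlet form normalisation $(\tfrac{1}{2}\mathcal{E}, H^1(\Omega))$ you wrote should be adjusted to match.
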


The proof of Proposition \ref{prop:generalised_neumann_uniq} now follows by similar arguments to those in \cite{GM72}.

\begin{proof}[Proof of Proposition \ref{prop:generalised_neumann_uniq}]
Existence was already addressed in our preceding discussion. It remains to prove uniqueness. We may assume that $\widehat{X}$ does not hit any of the vertices $\widehat{V}$ of $\partial \widehat{\Omega}$. And so for any $u\in C^{\infty}((0,\infty) \times \widehat{\Omega}\backslash \widehat{V})$, we can deduce a suitable variation of It\^{o}'s lemma from Lemma \ref{lem:skorokhod} in this case. Namely, that \begin{equation}
\begin{split}
    u(t;\widehat{X}_{t}) - u(0;\widehat{X}_{0}) & = \int_{0}^{t} \nabla u(s;\widehat{X}_{s}) \cdot dX_{s} + \int_{0}^{t} \left(\frac{\partial }{\partial t} + \Delta \right)u(s;\widehat{X}_{s}) \, ds\\
    & = \int_{0}^{t} \nabla u(s;\widehat{X}_{s}) \cdot dB_{s} + \int_{0}^{t} \nabla u(s;\widehat{X}_{s}) \cdot \widehat{n}(\widehat{X}_{s}) \, dL_{s} + \int_{0}^{t} \left(\frac{\partial }{\partial t} + \Delta \right)u(s;\widehat{X}_{s})\, ds.
\end{split}
\end{equation} And  since $u$ is a solution to the heat equation on $\Omega$ with (generalised) Neumann boundary condition, we see that \begin{equation}
    u(T-t;\widehat{X}_{t}) - u(T;\widehat{X}_{0}) = \int_{0}^{t} \nabla u(T-s;\widehat{X}_{s}) \cdot dB_{s},
\end{equation} from which we can deduce the uniqueness of $u$ following the proof of Theorem 3.6 in \cite{GM72}.
\end{proof}

\begin{rem}\label{rem:2pi_wedge}
The uniqueness of bounded solutions to the heat equation on the $2\pi$ wedge with bounded initial datum can be deduced in a similar way by decomposing a reflecting Brownian motion on the $2\pi$ wedge into its radial and angular parts and following the lines of the final part of the proof of Proposition \ref{prop:generalised_neumann_uniq}.
\end{rem}

\textbf{Acknowledgements}: We would like to thank Prof. Michiel van den Berg, Prof. Kryzstof Burdzy, Dr Megan Griffin-Pickering, and Prof. David Williams for their helpful comments and suggestions on this work. Sam Farrington was supported by an EPSRC DTG Studentship.

\bibliographystyle{plain}
\bibliography{biblio}

\end{document}